 \definecolor{darkblue}{RGB}{0,0,160}
\def\e{{\rm e}}
\def\cic{\mathbf}
\def\eps{\varepsilon}
\def\d{{\rm d}}
\def\dist{{\rm dist}}
\def\R {\mathbb{R}}
\def \l {\langle}
\def \r {\rangle}
\def \and{\qquad\text{and}\qquad}
\newcommand{\supp}{\mathrm{supp}\,}
\newcounter{thms}
\newcounter{other}
\numberwithin{other}{section}
\newtheorem{proposition}[other]{Proposition}
\newtheorem{theorem}[thms]{Theorem}
\newtheorem*{theorem*}{Theorem}
\newtheorem*{proposition*}{Proposition}
\newtheorem{corollary}{Corollary}
\numberwithin{corollary}{thms}
\newtheorem{lemma}[other]{Lemma}
\theoremstyle{definition}
\newtheorem{remark}[other]{Remark}
\def\vint_#1{\mathchoice%
      {\mathop{\kern 0.2em\vrule width 0.6em height 0.69678ex depth -0.58065ex
              \kern -0.8em \intop}\nolimits_{\kern -0.4em#1}}%
      {\mathop{\kern 0.1em\vrule width 0.5em height 0.69678ex depth -0.60387ex
              \kern -0.6em \intop}\nolimits_{#1}}%
      {\mathop{\kern 0.1em\vrule width 0.5em height 0.69678ex depth -0.60387ex
              \kern -0.6em \intop}\nolimits_{#1}}%
      {\mathop{\kern 0.1em\vrule width 0.5em height 0.69678ex depth -0.60387ex
              \kern -0.6em \intop}\nolimits_{#1}}}
\def\vintslides_#1{\mathchoice%
      {\mathop{\kern 0.1em\vrule width 0.5em height 0.697ex depth -0.581ex
              \kern -0.6em \intop}\nolimits_{\kern -0.4em#1}}%
      {\mathop{\kern 0.1em\vrule width 0.3em height 0.697ex depth -0.604ex
              \kern -0.4em \intop}\nolimits_{#1}}%
      {\mathop{\kern 0.1em\vrule width 0.3em height 0.697ex depth -0.604ex
              \kern -0.4em \intop}\nolimits_{#1}}%
      {\mathop{\kern 0.1em\vrule width 0.3em height 0.697ex depth -0.604ex
              \kern -0.4em \intop}\nolimits_{#1}}}
\newcommand{\aveint}[2]{\mathchoice%
      {\mathop{\kern 0.2em\vrule width 0.6em height 0.69678ex depth -0.58065ex
              \kern -0.8em \intop}\nolimits_{\kern -0.45em#1}^{#2}}%
      {\mathop{\kern 0.1em\vrule width 0.5em height 0.69678ex depth -0.60387ex
              \kern -0.6em \intop}\nolimits_{#1}^{#2}}%
      {\mathop{\kern 0.1em\vrule width 0.5em height 0.69678ex depth -0.60387ex
              \kern -0.6em \intop}\nolimits_{#1}^{#2}}%
      {\mathop{\kern 0.1em\vrule width 0.5em height 0.69678ex depth -0.60387ex
              \kern -0.6em \intop}\nolimits_{#1}^{#2}}}
\renewcommand*{\cdots}{%
  \mathinner{{\cdotp}{\cdotp}{\cdotp}}%
}
\numberwithin{equation}{section}
\title[Sparse for rough singular integrals]{A sparse domination principle for rough singular integrals}
\author[J.M.\ Conde-Alonso]{Jos\'e M. Conde-Alonso}
\address{\noindent Departament de Matem\`atiques, Facultat de Ci\`encies, \newline \indent Universitat Aut\`onoma de Barcelona, 08193 Barcelona, Spain}
\email{jconde@mat.uab.cat}
 \author[A.\ Culiuc]{Amalia Culiuc}
 \address{\noindent School of Mathematics, Georgia Institute of Technology, \newline \indent Atlanta, GA 30332, USA}
\email{amalia@math.gatech.edu}
 \author[F. Di Plinio]{Francesco Di Plinio} \address{\noindent Department of Mathematics, University of Virginia,  \newline \indent Kerchof Hall,  Box 400137, Charlottesville, VA 22904-4137, USA   }
 \email{francesco.diplinio@virginia.edu}
 \author[Y.\ Ou]{Yumeng Ou}
 \address{\noindent Department of Mathematics, Massachusetts Institute of Technology, \newline \indent  77 Massachusetts Avenue, Cambridge, MA 02139, USA  }
\email{yumengou@mit.edu}
  \subjclass[2010]{Primary: 42B20. Secondary: 42B25}
 \keywords{Positive sparse operators,   rough singular integrals, weighted norm inequalities}
\thanks{JM Conde-Alonso  was supported in part by ERC Grant 32501 and by MTM-2013-44304-P project. F Di Plinio was partially
supported by the National Science Foundation under the grants
   NSF-DMS-1500449 and  NSF-DMS-1650810.}
\begin{document}
 \begin{abstract} 
We prove that bilinear forms associated to the  rough homogeneous singular integrals  
\[
T_\Omega f(x) = \mathrm{p.v.} \int_{\R^d} f(x-y)   \Omega\left( {\textstyle \frac{y}{|y|}}\right)  \, \frac{\d y}{|y|^d} 
\]
where  $\Omega \in L^q (S^{d-1})$ has vanishing average and $1<q\leq \infty$,  and to Bochner-Riesz means at the critical index in $\R^d$
are dominated by sparse   forms involving $(1,p)$ averages. This domination is stronger  than the weak-$L^1$ estimates   for $T_\Omega$ and for  Bochner-Riesz means, respectively due to Seeger and Christ. Furthermore,  our domination theorems entail as a corollary new sharp quantitative $A_p$-weighted estimates for Bochner-Riesz means and for  homogeneous singular integrals  with unbounded angular part,   extending  previous results of Hyt\"onen-Roncal-Tapiola for $T_\Omega$.
Our results follow from a new abstract sparse domination principle which does not rely on weak endpoint estimates for maximal truncations.
   \end{abstract}
\maketitle

\section{Introduction and main results}

Singular integral operators  of  Calder\'on-Zygmund type, which are a priori \emph{signed} and \emph{non-local}, can be dominated  in norm \cite{Ler2013}, pointwise \cite{CR,Lac2015,LerNaz2015}, or dually \cite{BFP,CuDPOu,CuDPOu2}  
  by 
  sparse averaging operators (forms), which are in contrast \emph{positive} and \emph{localized}.  
For $1\leq p_1,p_2 < \infty$, we call  \emph{sparse $(p_1,p_2)$-averaging  form}  the bisublinear form
\[
\mathsf{PSF}_{\mathcal S;p_1,p_2}(f_1,f_2) := \sum_{Q\in \mathcal S} |Q| \l f_1 \r_{p_1,Q}   \l f_2 \r_{p_2,Q}, \qquad \l f \r_{p,Q}:=  |Q|^{-\frac1p}\left\|f\cic{1}_{Q}\right\|_p,
\]
associated to a (countable) sparse collection $\mathcal S$ of cubes of $\R^d$. The collection $\mathcal S$ is  $\eta$-sparse if  there exist $0<\eta\leq 1$ (a number which will not play a relevant role) and measurable sets $\{E_I:I \in \mathcal S\}$ such that
\[
E_I \subset I, \, |E_I| \geq \eta |I|, \qquad I,J\in \mathcal S, I \neq J \implies E_I \cap E_J  = \varnothing.
\] 

In this article, we prove a sparse domination principle of type
\begin{equation}
\label{sparsegen}
|\l T f_1, f_2 \r| \lesssim \sup_{\mathcal S}\mathsf{PSF}_{\mathcal S;p_1,p_2}(f_1, f_2)
\end{equation} 
for singular integral operators $T$ whose (possible) lack of kernel smoothness forbids the avenue exploited in \cite{Lac2015,Ler2015}. Our principle, summarized in Theorem \ref{theoremABS} below, can be  employed in a rather direct fashion to recover the best known, and sharp, sparse domination results for Dini and H\"ormander type Calder\'on-Zygmund operators \cite{BCDHL,HRT,Lac2015,Li1}.

However, the main purpose of our work is to suitably extend \eqref{sparsegen} to the class  of  rough  singular integrals  introduced in   the seminal paper of Calder\'on and Zygmund \cite{CZ}, and further studied, notably, in Duoandi\-koet\-xea-Rubio de Francia \cite{DR}, Christ \cite{Ch88}, Christ-Rubio de Francia \cite{CRub} and Seeger \cite{Seeger}. Prime examples from this class include the rough homogeneous singular integrals on $\R^d$
\begin{equation}
\label{tomega}
T_\Omega f(x) = \mathrm{p.v.} \int_{\R^d} f(x-y)   \Omega\left( {\textstyle \frac{y}{|y|}}\right)  \, \frac{\d y}{|y|^d},
\end{equation}
with  $\Omega \in L^q (S^{d-1})$  having zero average, as well as the critical   Bochner-Riesz means in dimension $d$, defined by the multiplier operator
\begin{equation}
\label{BRdef}
B_\delta f  = \mathcal F^{-1} \left[ \widehat f(\cdot)\left( 1-|\cdot|^2 \right)^\delta_+ \right], \qquad 
\delta= \frac{d-1}{2}.
\end{equation}
For the singular integrals \eqref{tomega}  no  sparse domination results were known  prior to this article, although some quantitative weighted estimates  were established in the recent works \cite{HRT,PPR}; see below for details.   For the  Bochner-Riesz means \eqref{BRdef}, the recent   results of \cite{BBP} and \cite{CDS} are far from being optimal  at the critical exponent.

 The main difficulty encountered by previous approaches in this setting is the following: first, notice that an estimate of the type \eqref{sparsegen} is already stronger than the weak-$L^{p_1}$  bound for $T$. In particular, if $p_1=1$ then \eqref{sparsegen} recovers the weak-$L^1$ endpoint bound. On the other hand, the preexisting techniques  for  sparse domination  \cite{BBP,BFP,HRT,Lac2015,Ler2015}    essentially rely on  weak-$L^p$ estimates for a   grand maximal truncation  of the singular integral operator $T$. But those do not seem attainable   in the context, for instance, of \cite{Seeger}, as observed in \cite{Ler2015}. In fact, the rough singular integrals we consider below are not known to satisfy such estimate for $p=1$,  and therefore a different approach is required in order to obtain the sparse bounds that we want.

As a corollary of our domination results, we obtain   quantitative $A_p$-weighted estimates for homogeneous singular integrals \eqref{tomega} whose angular part belongs to  $L^q(S^{d-1})$ for some $1<q\leq \infty$. These are novel, and sharp, when $q<\infty$, while in the case $q=\infty$ we recover the best known result recently proved in \cite{HRT} by other methods. Although our result  for the   Bochner-Riesz means \eqref{BRdef} seemingly yields the best known quantitative $A_p$ estimates, we do not know whether our results are  sharp in this case.

\subsection{Main results}  
 Our  main results consist of  estimates for the bilinear forms associated to $T_\Omega$ and $B_\delta$   by sparse operators involving $L^p$-averages.
The formulation of our first theorem requires the Orlicz-Lorentz norms
\[
\|\Omega\|_{L^{q,1}\log L(S^{d-1})} := q \int_0^\infty t\log(\e+t) |\{\theta\in S^{d-1}: |\Omega(\theta)|>t \}|^{\frac 1q} \frac{\d t}{t}    \qquad  1\leq q<\infty.
\] 
\begin{theorem} There exists an absolute dimensional constant $C>0$ such that the following holds. \label{theoremRH}
Let $\Omega\in L^1(S^{d-1})$ have zero average.  Then  for all  $1<t<\infty$,  $f_1\in L^t(\R^d), f_2\in L^{t'}(\R^d)$ there holds \[
 |\l T_\Omega f_1, f_2 \r| \leq \frac{Cp}{p-1} \sup_{\mathcal S} \mathsf{PSF}_{\mathcal S;1,p}(f_1, f_2) \begin{cases}  \|\Omega\|_{L^{q,1}\log L(S^{d-1})}  & 1< q<\infty, \quad p\geq q', \\   \|\Omega\|_{L^\infty(S^{d-1})}  &   1<p<\infty.   \end{cases}  \]
\end{theorem}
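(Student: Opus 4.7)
The plan is to deduce Theorem \ref{theoremRH} from the abstract sparse principle (Theorem \ref{theoremABS}), which I expect to take as its hypothesis a single-scale, localized $(1,p)$-bilinear estimate on $T_\Omega$: for every dyadic cube $Q$ and $f_1, f_2$ essentially supported on a fixed enlargement of $Q$, the ``on-scale'' portion of $\langle T_\Omega f_1, f_2\rangle$ coming from kernel scales $\sim\ell(Q)$ should be bounded by $C_\Omega\,|Q|\,\langle f_1\rangle_{1,Q}\,\langle f_2\rangle_{p,Q}$, uniformly in $Q$.

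To produce this single-scale estimate, I would perform the standard Calder\'on--Zygmund dyadic decomposition of the kernel $K=\sum_{j\in\Z}K_j$ with $K_j$ supported in $\{2^{j-1}<|y|\leq 2^j\}$; in polar coordinates, $\|K_j\|_{L^r(\R^d)}\sim 2^{jd(1/r-1)}\|\Omega\|_{L^r(S^{d-1})}$. For $j$ with $2^j\sim\ell(Q)$, Young's inequality in the pair $L^1\star L^{p'}\to L^{p'}$ followed by H\"older in $f_2$ yields
\[
|\langle K_j*f_1, f_2\rangle|\lesssim |Q|\,\|\Omega\|_{L^{p'}(S^{d-1})}\,\langle f_1\rangle_{1,Q}\,\langle f_2\rangle_{p,Q}.
\]
The requirement $p\geq q'$ is exactly the inclusion $L^q(S^{d-1})\hookrightarrow L^{p'}(S^{d-1})$, giving a constant controlled by $\|\Omega\|_{L^q(S^{d-1})}$; for $\Omega\in L^\infty$ any $p\in(1,\infty)$ works with constant $\|\Omega\|_\infty$, and the $Cp/(p-1)$ prefactor tracks Young's constant together with the conversion cost of the abstract principle from a single-scale inequality into a globally sparse form.

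To refine the $L^q$-constant into the Orlicz--Lorentz norm $\|\Omega\|_{L^{q,1}\log L(S^{d-1})}$, I would use a dyadic layer-cake decomposition $\Omega=\sum_k\Omega_k$, $\Omega_k:=\Omega\cic{1}_{\{2^k<|\Omega|\leq 2^{k+1}\}}$, with $\|\Omega_k\|_\infty\sim 2^k$ and $\operatorname{supp}\Omega_k\subset E_k$; I then apply the previous step to each $T_{\Omega_k}$ and sum. The additional $\log(\e+t)$ weight in the distributional integral defining the norm emerges as the logarithmic cost of aggregating the single-scale bounds across amplitude levels $k$ against a single fixed sparse $(1,p)$ form; this is a scalar optimization that does not interact with the abstract principle.

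I expect the main obstacle to lie entirely in the proof of Theorem \ref{theoremABS}, not in its application above: for $T_\Omega$ no weak-$(1,1)$ bound on a grand maximal truncation is known, so the Lerner-type routes of \cite{Lac2015,Ler2015} and similar works are unavailable, and the abstract principle must be designed to bypass maximal truncations altogether. Once Theorem \ref{theoremABS} is available, the proof of Theorem \ref{theoremRH} reduces, as sketched, to Young's inequality plus a scalar layer-cake rearrangement.
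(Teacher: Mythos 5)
Your reduction misreads what the abstract principle actually demands, and this is where the proof breaks. The hypothesis \eqref{lest} of Theorem \ref{theoremABS} is not a single-scale size bound: it is an estimate for the full truncated form $\Lambda_{\mathcal Q,\mu,\nu}(b,h)$, which (see \eqref{Kok}) is a sum over all scale gaps $j\geq 1$ between the kernel pieces $K_s$ and the bad functions $b_{s-j}$ supported on the stopping cubes, and it must hold with $b$ in the \emph{mean-zero} space $\dot{\mathcal X}_1$. Your Young/H\"older bound $|\langle K_j*f_1,f_2\rangle|\lesssim |Q|\,\|\Omega\|_{L^{p'}(S^{d-1})}\langle f_1\rangle_{1,Q}\langle f_2\rangle_{p,Q}$ is (essentially) the paper's ``trivial estimate'' (Lemma \ref{trivialestlemma} / Lemma \ref{lemmaV}); it uses no cancellation, carries no decay in $j$, and summing it over $j$ diverges. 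For Dini/H\"ormander kernels the needed decay in $j$ comes from kernel smoothness, but $T_\Omega$ has none, so the paper must produce geometric decay by exploiting the vanishing means of $b_L$ together with the microlocal machinery of Seeger: the angular decomposition of $\supp\Omega_j$ into caps $E_\nu$ of width $2^{-j}$, the frequency projections $P_\nu^j$, and the $TT^*$/$L^2$ arguments of Lemmata \ref{Glemma} and \ref{Ulemma}, interpolated against the trivial bound to get Lemma \ref{lemmaH} with the factor $2^{-cj(p-1)/p}$. That is the actual content of Proposition \ref{propseeger}, and nothing in your sketch supplies a substitute for it.

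Your layer-cake step also fails as stated: decomposing $\Omega=\sum_k\Omega_k$ by amplitude and applying the $L^\infty$ case to each piece gives constants $\sum_k\|\Omega_k\|_\infty\sim\sum_k 2^k$, since the $L^\infty$-case bound sees only the sup norm and not the measure of $\supp\Omega_k$; no ``logarithmic aggregation cost'' rescues this, and the sup over sparse forms is not additive in a way that helps. The paper instead couples the truncation of $\Omega$ to the scale gap, splitting at height $2^{\delta j}$ with $\delta\sim(p-1)/p$: the bounded part $\Omega_j$ has $\|\Omega_j\|_\infty\leq 2^{\delta j}$, which is beaten by the decay $2^{-cj(p-1)/p}$, while the unbounded part $\Delta_j$ is handled by the trivial $L^q$-based estimate (this is where $p\geq q'$ genuinely enters), and summability of $\sum_j\|\Delta_j\|_q$ is exactly the $L^{q,1}\log L$ condition; this interplay is also the source of the $Cp/(p-1)$ constant. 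So while your structural instincts (dyadic shell decomposition, $p\geq q'$ as an embedding condition, avoidance of maximal truncations) match the paper, the key analytic input --- the cancellation-plus-microlocal decay estimates behind \eqref{lest} --- is missing, and the argument you propose in its place does not close.
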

\begin{remark} To avoid Lorentz norms in the statement, one may recall the continuous embeddings $L^{q+\eps}(S^{d-1})\hookrightarrow L^{q,1}\log L(S^{d-1})\hookrightarrow L^{q} (S^{d-1})  $ for all $1\leq q<\infty$ and $\eps>0$.
\end{remark}
\begin{theorem} \label{theoremBR} There exists an absolute dimensional constant $C>0$ such that the following holds. For all $1<t<\infty$,  $f_1\in L^t(\R^d), f_2\in L^{t'}(\R^d)$,
the critical Bochner-Riesz means \eqref{BRdef} satisfy\[
 |\l  B_\delta f_1, f_2 \r| \leq \frac{Cp}{p-1} \sup_{\mathcal S} \mathsf{PSF}_{\mathcal S;1,p}(f_1, f_2), \qquad 1<p<\infty.   \]
\end{theorem}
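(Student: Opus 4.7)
The strategy is to verify the hypotheses of the abstract sparse domination principle Theorem \ref{theoremABS} for $B_\delta$ after performing a dyadic spatial decomposition of its convolution kernel. Denote this kernel by $K$. The classical stationary phase asymptotic at the critical index gives $K(x) = |x|^{-d}\bigl(a_+(x)e^{i|x|}+a_-(x)e^{-i|x|}\bigr)+R(x)$ for $|x|\geq 1$, with smooth amplitudes $a_\pm$ and a well-behaved lower-order remainder $R$. Let $\{\psi_j\}_{j\geq 0}$ be a smooth radial partition of unity adapted to annuli $\{|x|\sim 2^j\}$, and set $K_j:=\psi_j K$ and $B_j f := K_j * f$; then $B_\delta = \sum_{j\geq 0} B_j$, each $K_j$ is supported in $\{|x|\sim 2^j\}$, and for $j\geq 1$ has vanishing integral.

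The core ingredient is a single-scale $L^q$ decay for $B_j$. Since $\widehat{K_j}$ is essentially concentrated in an annulus of thickness $2^{-j}$ around the unit sphere $\{|\xi|=1\}$, with $\|\widehat{K_j}\|_\infty\lesssim 2^{-j(d-1)/2}$ reflecting the behaviour of $(1-|\xi|^2)^{(d-1)/2}_+$ at distance $2^{-j}$ from the sphere, one has $\|B_j\|_{L^2\to L^2}\lesssim 2^{-j(d-1)/2}$. Combining with the trivial $\|B_j\|_{L^\infty\to L^\infty}\lesssim \|K_j\|_{L^1}\lesssim 1$ and Riesz--Thorin interpolation yields $\|B_j\|_{L^q\to L^q}\lesssim 2^{-j\alpha(q)}$ with $\alpha(q)>0$ for every $1<q<\infty$, and with $\alpha(q)^{-1}$ behaving like $q'=q/(q-1)$ as $q\to 1$. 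Together with the spatial localization $\supp K_j\subset \{|x|\sim 2^j\}$ (which implies that $B_j$ maps functions supported in a cube of side $\sim 2^j$ to functions supported in a fixed dilate), this is precisely the single-scale $(1,p)$-averaging information required by Theorem \ref{theoremABS}, applied with $q=p'$ after Hölder and duality on the $f_2$-side.

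Feeding these inputs into the abstract principle and summing the geometric decay $\sum_j 2^{-j\alpha(p')}\lesssim p/(p-1)$ produces the claimed sparse bound in the regime $1<p\leq 2$; the range $p\geq 2$ then follows from the $p=2$ case by the elementary Jensen inequality $\langle f_2\rangle_{2,Q}\leq \langle f_2\rangle_{p,Q}$. The main obstacle is to translate the operator-norm single-scale decay produced by Fourier analysis into the precise $(1,p)$-averaging hypothesis of Theorem \ref{theoremABS}, without invoking any weak-$L^1$ bound for a grand maximal truncation of $B_\delta$; indeed, the whole point of the abstract principle is to bypass such endpoint estimates, which at the critical Bochner-Riesz index are both delicate (due to Christ) and \emph{strictly weaker} than the sparse bound we are after.
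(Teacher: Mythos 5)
There is a genuine gap, and it sits exactly where you locate ``the main obstacle'': the single-scale operator-norm decay you derive is not the input that Theorem \ref{theoremABS} requires, and no amount of H\"older or duality converts one into the other. Your decay $\|B_j\|_{L^q\to L^q}\lesssim 2^{-j\alpha(q)}$ is decay in the \emph{spatial scale of the kernel}; summed over $j$ it only reproves $L^q$-boundedness of $B_\delta$. Hypothesis \eqref{lest}, by contrast, asks you to bound $\Lambda_{\mathcal Q,\mu,\nu}(b,h)$, i.e.\ $\sum_{j\geq 1}\sum_s\langle K_s*b_{s-j},h\rangle$, where $b=\sum_L b_L$ is the bad part of a Calder\'on--Zygmund decomposition measured only in the $L^1$-normalized norm $\|b\|_{\mathcal X_1}$, and $b_{s-j}$ lives on cubes whose sidelength is $2^{-j}$ times the kernel scale $2^{s}$. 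What must decay geometrically is the contribution as a function of this \emph{scale gap} $j$, uniformly in $s$, with $b$ controlled only in $L^1$ density; an $L^q\to L^q$ operator bound gives neither the gain in $j$ nor any way to exploit $\|b_{s-j}\|_q$, which you do not control.

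The paper's proof (Proposition \ref{BRest} via Lemma \ref{decayBR}) obtains the needed gain $2^{-cj}\|h\|_{\mathcal Y_2}$ by a $TT^*$/almost-orthogonality argument based on Christ's kernel estimates \eqref{BRlemma}: $|K_s*\widetilde K_s(x)|\lesssim 2^{-ds}(1+|x|)^{-\delta}$ and $\|K_s*\widetilde K_t\|_\infty\lesssim 2^{-dt}2^{-\delta s}$ for $s<t-1$. The decay $(1+|x|)^{-\delta}$, which encodes the oscillation $e^{\pm i|x|}$ of the kernel, is paired with the fact that $b_{s-j}$ has $L^1$ mass $\lesssim 2^{md}\|b\|_{\mathcal X_1}$ on balls of radius $2^m\geq 2^{s-j}$; that pairing is precisely what produces the factor $2^{-\delta j}$. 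Note also that the paper deliberately does \emph{not} use mean-zero of $b$ here ($b\in\mathcal X_1$, not $\dot{\mathcal X}_1$), nor the near-vanishing integral of $K_j$ you mention: the cancellation mechanism is oscillatory, not constant-mean-zero. Your endgame (interpolating a trivial $\mathcal Y_1$ bound against a $j$-decaying $\mathcal Y_2$ bound, summing to get $p/(p-1)$, and handling $p\geq 2$ by the inclusion of $\mathcal Y$-norms) matches the paper's; what is missing is the entire $L^2$ almost-orthogonality argument that supplies the decaying $\mathcal Y_2$ bound in the first place.
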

The weak-$L^1$ estimate for $T_\Omega$ is the main result of \cite{Seeger}, while the same endpoint estimate for \eqref{BRdef} has been established in \cite{Ch88}. Theorems \ref{theoremRH} and \ref{theoremBR} recover such results; see Appendix \ref{Weak11} for a proof of this implication,  which we include for future reference. This is not surprising as the localized estimates for \eqref{tomega}, \eqref{BRdef} which are needed to apply our abstract result are a distillation and an improvement of the microlocal techniques of \cite{Seeger} and of the previous works \cite{Ch88,CRub}, and of the oscillatory integral estimates of \cite{Ch88} respectively.

We reiterate that the commonly used techniques for sparse domination, which rely on the weak-$L^1$ estimate for the
maximal truncation of the singular integral operator, fail to be applicable in the context of  Theorem \ref{theoremRH} as the maximal truncations of $T_\Omega$ in \eqref{tomega} are not known to satisfy such estimate even when $\Omega \in L^\infty(S^{d-1})$ \cite{GS2}. Our abstract Theorem \ref{theoremABS}, whose statement is more technical and is postponed until Section \ref{secABS}, only relies on the uniform $L^2$ (or $L^r$ for any $r$) boundedness of the truncated operators, and thus might be considered stronger than the approaches of the mentioned references. See Remark \ref{Remphy} for additional discussion on this point. 

Theorems \ref{theoremRH} and \ref{theoremBR} entail as corollaries  a family of  quantitative weighted estimates.  
\setcounter{thms}{1}
\begin{corollary} \label{corRH1}  If $\Omega$ lies in the unit ball of $L^{q,1}\log L(S^{d-1})$ for some $1<q<\infty$ and has zero average, we have the weighted norm inequalities
\begin{equation} \label{weighted}
 \|T_\Omega\|_{L^t(w) \to L^t(w)} \leq C_ {t,q}   [w]_{A_{\frac{t}{q'}}} ^{\max\left\{1,\frac{1}{t-q'}\right\}}, \qquad   \;q'<t<\infty.
\end{equation}
If furthermore  $\|\Omega\|_{L^\infty(S^{d-1})}\leq 1$,  
\begin{equation} \label{weightedinfty}
 \|T_\Omega\|_{L^t(w)\to L^t(w)} \leq C_t
[w]_{A_{t}}^{\frac{1}{t-1}\max\left\{t,2\right\}} \qquad   1<t<\infty.  
\end{equation}
\end{corollary}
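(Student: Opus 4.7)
My plan is to derive both weighted bounds as corollaries of the sparse domination provided by Theorem \ref{theoremRH}, combined with standard weighted estimates for sparse $(p_1,p_2)$-forms, optimizing in each case the free parameter $p$. The first observation is that the elementary symmetry $T_\Omega^\ast = T_{\tilde\Omega}$ with $\tilde\Omega(\theta):=\Omega(-\theta)$---which preserves both the $L^{q,1}\log L(S^{d-1})$ and the $L^\infty(S^{d-1})$ norms of $\Omega$---produces, in addition to the $(1,p)$-sparse domination given by the theorem, a companion $(p,1)$-sparse bound
\[
|\langle T_\Omega f_1, f_2\rangle| \leq \frac{Cp}{p-1}\,\|\Omega\|_{\ast}\,\sup_{\mathcal S}\mathsf{PSF}_{\mathcal S;p,1}(f_1, f_2),
\]
where $\|\Omega\|_{\ast}$ denotes the appropriate norm from Theorem \ref{theoremRH}, for the same admissible ranges of $p$.

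Next, I would invoke the well-known weighted estimate for sparse $(p,1)$-forms: for $t>p\geq 1$ and $w\in A_{t/p}$,
\[
\sup_{\mathcal S}\mathsf{PSF}_{\mathcal S;p,1}(f_1, f_2) \leq C_{t,p}\,[w]_{A_{t/p}}^{\max\{1,\,1/(t-p)\}}\,\|f_1\|_{L^t(w)}\,\|f_2\|_{L^{t'}(w^{1-t'})},
\]
which follows from principal-cube stopping-time arguments together with Buckley-type sharp bounds for $M_p$, and can be quoted from the companion works of Bernicot--Frey--Petermichl and Culiuc--Di Plinio--Ou. Pairing this with the $(p,1)$-sparse domination yields the master inequality
\[
\|T_\Omega\|_{L^t(w)\to L^t(w)} \leq C\,\frac{p}{p-1}\,\|\Omega\|_{\ast}\, [w]_{A_{t/p}}^{\max\{1,\,1/(t-p)\}}.
\]

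For \eqref{weighted}, I would choose $p=q'$---the endpoint $p\geq q'$ admissible in Theorem \ref{theoremRH}---which is compatible with $t>q'$; the stated bound drops out immediately, with $p/(p-1)=q$ absorbed into $C_{t,q}$.

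For \eqref{weightedinfty}, $p\in(1,\infty)$ is free and must be optimized against the blow-up $p/(p-1)$. The plan is to invoke the Hyt\"onen--P\'erez sharp reverse H\"older self-improvement applied to the dual weight $\sigma = w^{1-t'}$, which gives $w\in A_{t-\delta}$ with $[w]_{A_{t-\delta}}\lesssim[w]_{A_t}$ for $\delta\simeq 1/[\sigma]_{A_\infty}$; using the standard comparison $[\sigma]_{A_\infty}\leq[\sigma]_{A_{t'}}=[w]_{A_t}^{1/(t-1)}$ one obtains $\delta\gtrsim [w]_{A_t}^{-1/(t-1)}$. Setting $p=t/(t-\delta)$ so that $A_{t/p}=A_{t-\delta}$ gives $[w]_{A_{t/p}}\lesssim[w]_{A_t}$ and $p/(p-1)\simeq t/\delta\lesssim [w]_{A_t}^{1/(t-1)}$. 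A case split on whether $t\geq 2$ (so $\max\{1,1/(t-p)\}=1$, yielding total exponent $1/(t-1)+1=t/(t-1)$) or $1<t<2$ (so the max equals $1/(t-p)\simeq 1/(t-1)$, yielding total exponent $1/(t-1)+1/(t-1)=2/(t-1)$) then produces exactly the claimed exponent $\max\{t,2\}/(t-1)$. The main technical care is in the sharp reverse H\"older step: $\delta$ must be calibrated so that $[w]_{A_{t-\delta}}\lesssim[w]_{A_t}$ with an absolute implicit constant, and so that in the $1<t<2$ regime the exponent $1/(t-p)$ is only a bounded multiplicative factor off from $1/(t-1)$; this is the one genuinely delicate point of the argument.
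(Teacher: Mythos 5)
Your proposal is correct, and for \eqref{weighted} it coincides with the paper's proof: dualize to get a $(q',1)$-sparse bound at the endpoint $p=q'$ and quote the weighted estimate for sparse $(q',1)$-forms from \cite[Proposition 6.4]{BFP}. For \eqref{weightedinfty} you take a genuinely different route. The paper never optimizes $p$ inside the sparse-form weighted bound; instead it inserts the average comparison $\l f \r_{1+\eps,Q} \leq \l f \r_{1,Q} + C\eps \l \mathrm{M}_{1+\eps} f\r_{1,Q}$ of \cite[Proposition 4.1]{DpLer2013} into the $(1,1+\eps)$-sparse bound, reducing everything to $(1,1)$-sparse forms applied to $f$ and to $\mathrm{M}_{1+\eps}f$; the quantitative weight dependence then comes from the single fixed exponent $\max\{1,1/(t-1)\}$ for $\mathsf{PSF}_{\mathcal S;1,1}$ together with Buckley's bound $\|\mathrm{M}_{1+\eps}\|_{L^t(w)\to L^t(w)}\lesssim [w]_{A_{t/(1+\eps)}}^{(1+\eps)/(t-1-\eps)}$, and the optimization over $\eps$ is the elementary $\inf_{0<\eps<t-1}(\eps^{-1}+\dots)$ displayed in the paper. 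Your version pushes the $p$-dependence into the $A_{t/p}$ class and uses the sharp reverse H\"older openness of $A_t$ (applied to $\sigma=w^{1-t'}$) to choose $p=t/(t-\delta)$ with $\delta\simeq(t-1)[\sigma]_{A_\infty}^{-1}\gtrsim(t-1)[w]_{A_t}^{-1/(t-1)}$; this is a standard and legitimate alternative (in the spirit of Hyt\"onen--P\'erez), and your exponent bookkeeping checks out, including the observation that the corrections $[w]_{A_t}^{O(\delta/(t-1)^2)}$ are bounded by $C_t$ because $\delta\lesssim (t-1)[w]_{A_t}^{-1/(t-1)}$. Two points deserve explicit care if you write this up: (i) the constant $C_{t,p}$ in the weighted bound for $\mathsf{PSF}_{\mathcal S;p,1}$ must be tracked as $p\to 1^+$ (the paper's route sidesteps this by only ever invoking the $(1,1)$-form bound with a fixed constant); and (ii) near $t=2$ the quantity $t-p$ dips slightly below $1$, so the case split on $\max\{1,1/(t-p)\}$ is governed by $t-p$ rather than by $t\geq 2$ directly, and the resulting extra factor $[w]_{A_t}^{O(\delta)}$ must again be absorbed into $C_t$. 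Each approach buys something: the paper's is more robust in its constants and reuses only the simplest sparse-form estimate, while yours makes transparent that the exponent $\frac{1}{t-1}\max\{t,2\}$ arises from the trade-off between the blow-up $p/(p-1)$ in Theorem \ref{theoremRH} and the $A_{t/p}$ gain, which is exactly the point the authors make at the end of Section 1.
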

\setcounter{corollary}{0}
\setcounter{thms}{2}
\begin{corollary} \label{corBR1}  Referring to \eqref{BRdef}, we have the weighted norm inequalities
\begin{equation}  \label{weightedBR}
 \|B_\delta\|_{L^t(w)\to L^t(w)} \leq C_t
[w]_{A_{t}}^{\frac{1}{t-1}\max\left\{t,2\right\}} \qquad   1<t<\infty.  
\end{equation}
\end{corollary}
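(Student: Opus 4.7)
The corollary is a direct consequence of the sparse domination provided by Theorem \ref{theoremBR}, together with the standard weighted theory for $(1,p)$-sparse forms. The argument is formally identical to the one that yields the bound \eqref{weightedinfty} in Corollary \ref{corRH1}, since in both cases Theorem \ref{theoremBR} and the $L^\infty$-case of Theorem \ref{theoremRH} deliver the same sparse domination: $(1,p)$-averages with a constant $Cp/(p-1)$ valid in the full range $1<p<\infty$.

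The plan has three steps. First, by Theorem \ref{theoremBR} and duality, for any $1<p<\infty$,
\begin{equation*}
\|B_\delta\|_{L^t(w)\to L^t(w)}\leq \frac{Cp}{p-1}\,\bigl\|\mathsf{PSF}_{\mathcal S;1,p}\bigr\|_{L^t(w)\times L^{t'}(w^{1-t'})\to \mathbb R}.
\end{equation*}
Second, invoke the well-known sharp weighted estimate for positive $(1,p)$-sparse forms: whenever $p<t<\infty$,
\begin{equation*}
\bigl\|\mathsf{PSF}_{\mathcal S;1,p}\bigr\|_{L^t(w)\times L^{t'}(w^{1-t'})\to \mathbb R}\;\leq\; C_{t,p}\,[w]_{A_{t/p}}^{\max\left(1,\frac{1}{t-p}\right)}.
\end{equation*}
Third, for a given $w\in A_t$ we exploit the open-ended nature of the $A_t$ class via the sharp reverse H\"older inequality of Hyt\"onen–P\'erez: there is an absolute constant $c>0$ such that for every $p-1 \leq c\,[w]_{A_t}^{-1/(t-1)}$ one has $w\in A_{t/p}$ with $[w]_{A_{t/p}}\leq 2[w]_{A_t}$.

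The endgame is to choose $p$ so that $p-1\sim [w]_{A_t}^{-1/(t-1)}$, which makes $(p-1)^{-1}\lesssim [w]_{A_t}^{1/(t-1)}$ and yields
\begin{equation*}
\|B_\delta\|_{L^t(w)\to L^t(w)}\leq C_t\,[w]_{A_t}^{\frac{1}{t-1}}\,[w]_{A_t}^{\max\left(1,\frac{1}{t-p}\right)}.
\end{equation*}
For $t\geq 2$ the value $t-p$ stays bounded away from $0$, so the second exponent is $1$ and we get $[w]_{A_t}^{t/(t-1)}$. For $1<t<2$ we have $t-p\to t-1$, so the second exponent equals $1/(t-1)$, producing $[w]_{A_t}^{2/(t-1)}$. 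Both cases combine into the claimed exponent $\frac{1}{t-1}\max\{t,2\}$. There is no real obstacle here beyond bookkeeping; the only substantive input beyond Theorem \ref{theoremBR} is the quantitative open-ended $A_t$ property, which is a black-box result in the literature.
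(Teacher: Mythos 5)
Your argument is correct and arrives at the stated exponent, but by a genuinely different (though closely related) route. The paper proves \eqref{weightedBR} in tandem with \eqref{weightedinfty}: it first converts the $(1,1+\eps)$-averages produced by Theorem \ref{theoremBR} into $(1,1)$-averages plus a maximal-function correction, via the inequality $\l f \r_{1+\eps,Q} \leq \l f \r_{1,Q} + C\eps \l \mathrm{M}_{1+\eps} f\r_{1,Q}$ of \cite{DpLer2013}, and then combines the sharp $A_t$ bound for $(1,1)$-sparse forms with Buckley's sharp bound for $\mathrm{M}_{1+\eps}$, optimizing over $\eps$. You instead feed the $(1,p)$-domination directly into the sharp weighted bound for sparse forms carrying a $p$-average (the same \cite[Proposition 6.4]{BFP} estimate the paper invokes for \eqref{weighted}) and then use the quantitative openness of $A_t$. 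The two routes are essentially equivalent in content --- the paper's ``standard reasoning'' also conceals a choice $\eps\sim[w]_{A_t}^{-1/(t-1)}$ coming from the sharp reverse H\"older inequality --- and both rest on the same two pillars: the $Cp/(p-1)$ blow-up rate in Theorem \ref{theoremBR} and the choice $p-1\sim[w]_{A_t}^{-1/(t-1)}$; yours avoids the detour through the maximal operator. Two details to tighten: (i) Theorem \ref{theoremBR} places the $p$-average on $f_2$, whereas the bound $[w]_{A_{t/p}}^{\max\left(1,\frac{1}{t-p}\right)}$ concerns the form with the $p$-average on the $L^t(w)$ entry, so you should apply the theorem to the adjoint of $B_\delta$ (harmless, as $B_\delta$ is self-adjoint, but the paper makes the analogous remark for \eqref{weighted}); (ii) for $t$ in a neighborhood of $2$ from above one has $t-p<1$, so the exponent $\max\left(1,\frac{1}{t-p}\right)$ slightly exceeds $1$; the resulting correction $[w]_{A_t}^{O(p-1)}=[w]_{A_t}^{O([w]_{A_t}^{-1/(t-1)})}$ is bounded, and the analogous remark applies for $1<t<2$, but this step --- together with the fact that the admissible constant $c$ in the openness statement must depend on $t$ so that $p<t$ --- deserves a line.
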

\begin{proof}[Proof of Corollaries \ref{corRH1}, \ref{corBR1}]
To prove (\ref{weighted}), applying Theorem \ref{theoremRH} for $p=q'$ (strictly speaking, to the adjoint of $T_\Omega$) yields that the bilinear form associated to $T_\Omega $ is dominated by  \[\sup_{\mathcal S}\mathsf{PSF}_{\mathcal S;q',1}.\] The proof of the weighted estimate can then be found, for instance, in \cite[Proposition 6.4]{BFP}.
We prove (\ref{weightedinfty}), and  \eqref{weightedBR} follows via the same argument: below, $C$ denotes a  positive absolute constant which may vary between occurrences.   Combining the inequality \cite[Proposition 4.1]{DpLer2013}
\[
\l f \r_{1+\eps,Q} \leq \l f \r_{1,Q} +C\eps \l \mathrm{M}_{1+\eps}f \r_{1,Q},
\]
which is valid for all $\eps>0$, with the estimate of Theorem \ref{theoremRH} for $p=1+\eps$  we obtain
 \[
 |\l T_\Omega f_1, f_2 \r| \leq \frac{C}{\eps}   \sup_{\mathcal S} \mathsf{PSF}_{\mathcal S;1,1}(f_1, f_2)   + C \sup_{\mathcal S} \mathsf{PSF}_{\mathcal S;1,1}(\mathrm{M}_{1+\eps}f_1, f_2), \qquad \eps>0. 
 \]
The above display leads via standard reasoning  \cite{CMP,HPR,KM} to the chain of inequalities
\[\begin{split}
\|T\|_{L^t(w) \to L^t(w)} &\leq C_t [w]_{A_t}^{\max\left\{1,\frac{1}{t-1}\right\}} \inf_{0<\eps<t-1} \left(\frac{1}{\eps} + \|\mathrm{M}_{1+\eps}\|_{L^t(w)\to L^t(w)} \right) \\ &\leq 
C_t [w]_{A_t}^{\max\left\{1,\frac{1}{t-1}\right\}}\inf_{0<\eps<t-1} \left(\frac{1}{\eps} + [w]_{A_{\frac{t}{1+\eps}}}^{\frac{1+\eps}{t-(1+\eps)}} \right)   \leq 
  C_t
[w]_{A_{t}}^{\frac{1}{t-1}\max\left\{t,2\right\}},
\end{split}
\] 
and the proof is complete.
\end{proof}
 Our Corollary \ref{corRH1} is a quantification of the weighted inequalities
due to Watson \cite{Wat1990} and Duoandikoetxea \cite{Duo1993}: 
if $1<q\leq \infty$ and $\Omega \in L^q(S^{d-1})$ then  
\[
 \left.\begin{array}{ll} w\in A_{\frac{t}{q'}} & q'\leq t< \infty,\, t\neq 1, \\  w^{\frac{1}{1-t}}\in A_{\frac{t'}{q'}} & 1< t\leq q, \, t\neq \infty, \\  w^{q'} \in A_t & 1<t<\infty. \end{array} \right\} \implies \|T_\Omega\|_{L^t(w) \to L^t(w)}< \infty
\]
Estimate \eqref{weightedinfty} was first established by Hyt\"onen, Roncal and Tapiola \cite{HRT} via a different two-step technique involving sparse domination for Dini-type kernels, a Littlewood-Paley decomposition along the lines of \cite{CRub} and interpolation with change of measure.  In \cite{PPR}, these ideas were extended to obtain $A_1$ estimates for $T_\Omega$ and commutators of $T_\Omega$ and BMO symbols. At this time, we do not know whether the power of the Muckenhoupt constant in \eqref{weightedinfty} is sharp.

Qualitative $A_p$-bounds for critical Bochner-Riesz means are classical \cite{ShiSun}; see also \cite{V}. On the other hand, Corollary \ref{corBR1} seems to be the first quantitative $A_p$ estimate for $B_\delta$. We do not know whether the power of the $A_p$ constant in \eqref{weightedBR} is sharp;  the construction in \cite[Corollary 3.1]{LPR} shows that the optimal power $\alpha_p$  must obey $\alpha_p\geq \max\{1,1/(p-1)\}$. The   article \cite{BBP} contains sparse domination estimates and weighted inequalities for the supercritical regime $0<\delta'<\delta$ which are not informative in the critical case. An extension of our methods to the supercritical cases will appear in   forthcoming work.

Finally, we mention that our argument for \eqref{weightedinfty} and \eqref{weightedBR}   shows  that improvements of such power in Corollaries \ref{corRH1} and \ref{corBR1} are tied to the blowup rate as $p\to 1^+$ of the main estimate of Theorems \ref{theoremRH} and \ref{theoremBR}. 
 

\subsection{A remark on the proof and plan of the article}  Theorems \ref{theoremRH} and \ref{theoremBR} fall under the scope of the same abstract result, Theorem \ref{theoremABS}, which is stated and proved in Section \ref{secABS}. Theorem \ref{theoremABS} is obtained by means of an iterative scheme reminiscent of the arguments used in \cite{CuDPOu} by three of us to prove a sparse domination estimate for the bilinear Hilbert transform, and later adapted to dyadic and continuous Calder\'on-Zygmund singular integrals in \cite{CuDPOu2}. At each iteration, a Calder\'on-Zygmund type decomposition is performed, and the operator itself is decomposed into small scales (scales falling within the exceptional set) which will be estimated at subsequent steps of the iteration, and large scales. The action of the large scales on the good parts is controlled by means of the uniform $L^r$-bound for the truncations of $T$. The contribution of the bad, mean zero part under the large scales of the operator is then controlled by means of suitably localized estimates relying on the \emph{constant-mean zero} type cancellation.  We emphasize that the present work shares a   perspective based on bilinear forms with  other recent papers: \cite{KL} by Krause and Lacey and \cite{LS} by Lacey and Spencer. The notable difference is that these references, dealing  with oscillatory and random discrete singular integrals, use (dilation) symmetry breaking and  $TT^*$, rather than constant-mean zero, as the principal cancellation mechanisms, in accordance with the oscillatory nature of their objects of study.


  Section \ref{secDH}   contains    localized estimates for kernels of Dini and H\"ormander type which, besides being of use in later arguments, allow us to reprove the optimal sparse domination results for these classes: we send to Subsection \ref{ssDH} for the statements.
  In Sections \ref{secRH}  and \ref{secBR} we  provide the  necessary localized estimates for Theorems \ref{theoremRH} and \ref{theoremBR} respectively.  The estimates of Section \ref{secRH} are a delicate strengthening of the microlocal arguments of \cite{Seeger}. 
The proof of Theorem \ref{theoremBR},  a re-elaboration along the same lines of the arguments of \cite{Ch88}, is carried out in Section \ref{secBR}.
  Although we find  hard to believe that  these techniques can be sharpened towards the stronger localized $(1,1)$ estimate, we have no explicit counterexample for this possibility. 
 \subsection*{Notation} 
 As customary, $q'=\frac{q}{q-1}$ denotes the Lebesgue dual exponent to $q\in (1,\infty)$, with the usual extension $1'=\infty$, $ \infty'=1$.
 We denote the center and the sidelength of  a cube $Q\in \R^d$ by  $c_Q$ and  $\ell(Q) $ respectively. We will also adopt the shorthand $s_Q=\log_2 \ell (Q)$. 
We write 
\[
\mathrm{M}_{p}(f)(x) =\sup_{Q \subset \R^d} \l f \r_{p,Q} \cic{1}_Q(x)
\]
for the $p$-Hardy Littlewood maximal function. The positive constants  implied by  the almost inequality sign $\lesssim$ may depend (exponentially) on the dimension $d$ only and    may vary from line to line without explicit mention.
  \subsection*{Acknowledgments} This  work was initiated while J.\ M.\  Conde-Alonso was visiting the Department of Mathematics at the University of Virginia during the Fall semester of 2016; the kind hospitality of the Department is gratefully acknowledged.
  
   The authors have greatly benefited from their participation in the online seminar series on Sparse Domination of Singular Integral Operators hosted by Georgia Tech in the Fall 2016 semester. In particular, they would like to thank Ben Krause, Michael Lacey, and Dario Mena Arias for their inspiring seminar talks on their current and  forthcoming works related to sparse domination in the arithmetic setting.

The authors are grateful to Alexander  Barron and Jill Pipher for stimulating discussions on the subject of this article which led to a simplification of one of the assumptions in Theorem \ref{theoremABS}, and to David Cruz-Uribe, Javier Duoandikoetxea and Carlos P\'erez for providing   useful references and remarks concerning the weighted theory of rough singular integrals. 
  
\section{A sparse domination principle} \label{secABS}

This section is dedicated to the statement and proof of our sparse domination principle, Theorem \ref{theoremABS}. 
 \subsection{The main structural assumptions}
 Our structural assumptions in Theorem \ref{theoremABS} will be the following. Let $1< r<\infty$ and $\Lambda$ be an $L^{r}(\R^d)\times L^{r'}(\R^d)$-bounded bilinear form whose kernel $K=K(x,y)$ coincides with a   function away from the diagonal   $\{(x,y)\in \R^d\times\R^d: x=y\}$.  More precisely,  whenever $f_1 \in L^{r}(\R^d), f_2\in L^{r'}(\R^d)$ are compactly and  disjointly supported
 \[
 \Lambda (f_1,f_2)=\int_{\R^d} \int_{\R^d} K(x,y) f_1(y) \,\d y \, f_2(x)\,  \d x
 \]
with absolute convergence of the integral. 
We  assume that there exists $1<q\leq \infty$ such that the kernel $K$ of $\Lambda$ admits the decomposition
\begin{equation}
\label{assker2} \tag{$\mathsf{SS}$}
\begin{split}
&  
K(x,y)=\sum_{s\in \mathbb Z} K_s(x,y), \qquad 
\\ &
\supp K_s \subset\left\{(x,y)\in \R^d\times \R^d: x-y \in A_s\right\}, \qquad A_s:=\left\{z\in \R^d: 2^{s-2}< |z|<2^{s}\right\},
 \\
& [K]_{0,q} :=\sup_{s\in \mathbb Z} 2^{\frac{sd}{q'}}\sup_{x\in \R^d}  \left(\|K_s(x,x+\cdot)\|_{q} +   \|K_s(x+\cdot,x)\|_{q} \right)<\infty. 
\end{split}
\end{equation}
Further, we assume that the   truncated forms  associated to the above decomposition by
\begin{equation}
\label{ltf}\Lambda_{\mu}^\nu(h_1,h_2) := \int\sum_{\mu  < s\leq \nu  }  K_s(x,y)h_1(y)h_2(x) \, \d y\d x 
\qquad \mu,\nu\in \mathbb Z \cup\{-\infty,\infty\}
\end{equation}
satisfy
\begin{equation}
\label{asstrunc} \tag{$\mathsf{T}$}
C_{\mathsf{T}}(r)=:\sup_{\mu<\nu} \left(\|\Lambda_{\mu}^\nu\|_{L^r(\R^d)\times L^{r'}(\R^d)\to\mathbb{C}}\right) <\infty.
\end{equation}
\begin{remark} \label{remlimarg1}
Under the assumptions  \eqref{assker2} and \eqref{asstrunc},   a standard limiting argument \cite[Par.\ I.7.2]{Stein}  yields    that\[
\Lambda (f_1,f_2) = \l m f_1,\overline {f_2}\r     +\lim_{\nu\to \infty} \Lambda_{-\nu}^{\nu}(f_1,f_2) 
\]
for some   $m\in L^\infty(\R^d)$, whenever $f_1 \in L^{r}(\R^d), f_2\in L^{r'}(\R^d)$. It is not hard to see \cite[Lemma 4.7]{LMena} that  \[|\l m f_1,f_2\r|\lesssim\|m\|_\infty \sup_{\mathcal S}\mathsf{PSF}_{\mathcal S;1,1}(f_1, f_2) \]
so that for the purpose of our Theorem \ref{theoremABS} below we may assume that $m=0 $ in the above equality. For this reason,
when $\mu=-\infty$ or $\nu=\infty$ or both, we are allowed to omit the subscript or superscript in \eqref{ltf} and simply write   $\Lambda^\nu$ or $\Lambda_\mu$ or $\Lambda$. Also, when $\mu\geq \nu$, the summation in \eqref{ltf} is void, so that $\Lambda_\mu^\nu\equiv 0$. \end{remark}
\subsection{Localized spaces over stopping collections}
A further condition in our abstract theorem will involve local norms associated to \emph{stopping collections} of (dyadic) cubes. Throughout the article, by \emph{dyadic cubes} we refer to the elements of any fixed  dyadic lattice  $\mathcal D$ in $\R^d$.

  Let $Q\in \mathcal D$ be a fixed dyadic cube in $\R^d$. A   collection $ \mathcal Q\subset \mathcal D$ of dyadic cubes is a \emph{stopping collection} with top $Q$
if the elements of $\mathcal Q$ are pairwise disjoint and contained in $3Q$,
\begin{equation}
\label{stopprop}
L,\,L'\in \mathcal Q,\; L\cap L'\neq \emptyset\implies L=L', \qquad L\in \mathcal Q \implies L \subset 3Q
\end{equation}
 and enjoy the further  separation properties
\begin{equation}
\label{separation}
\begin{split}
&
L, L'\in \mathcal Q,\, |s_L- s_{L'}| \geq  8\implies 7L \cap 7L'=\varnothing,\\
&
 \bigcup_{L\in \mathcal Q: 3L\cap 2Q\neq \emptyset } 9L \subset   \bigcup_{L\in \mathcal Q} L=:\mathsf{sh} \mathcal{Q};\end{split}
\end{equation}
the notation $\mathsf{sh} \mathcal{Q}$ for the union of the cubes in $\mathcal Q$ will also be used below.
For $1\leq p\leq \infty$, define $\mathcal Y_p(\mathcal Q)$ to be the subspace of $L^p(\R^d)$ of functions satisfying
\[
\supp h \subset 3Q, \qquad \infty>
\|h\|_{\mathcal Y_p(\mathcal Q) }:=\begin{cases} \displaystyle \max\left\{\left\|h\cic{1}_{\R^d\setminus \mathsf{sh} \mathcal{Q} }\right\|_{\infty} ,\,\sup_{L\in \mathcal Q}\,\inf_{x\in \widehat L } \mathrm{M}_p h(x) \right\} & p<\infty \\ \|h \|_{\infty} & p=\infty 
\end{cases} \]
where we wrote $\widehat L$ for the (non-dyadic) $2^{5}$-fold dilate of $L$.
We also   denote by ${\mathcal X_p(\mathcal Q) }$ the subspace of $\mathcal Y_p(\mathcal Q)$ of functions satisfying
\[
b=\sum_{L\in \mathcal Q} b_L, \qquad \supp  b_L \subset L.
\]
Furthermore, we write $b\in \dot{\mathcal X}_{p}(\mathcal Q)$ if 
\[
b\in  {\mathcal X}_{p}(\mathcal Q), \qquad \int_{L} b_L = 0\quad \forall L \in \mathcal Q.
\]
We will use the notation $\|b\|_{\mathcal X_p(\mathcal Q)}$ for $\|b\|_{\mathcal Y_p(\mathcal Q)}$ when $b\in \mathcal X_p(\mathcal Q)$, and similarly for $b\in \dot{\mathcal X}_{p}(\mathcal Q)$. When the stopping collection $\mathcal Q$ is clear from the context or during proofs we may omit $(\mathcal Q)$ from the subscript and simply write $\|\cdot\|_{\mathcal Y_p  }$ or $\|\cdot\|_{\mathcal X_p}$.
\begin{remark}[Calder\'on-Zygmund decomposition] \label{CZrem}
 There is a natural Cald\'eron-Zygmund decomposition associated to stopping collections. Observe that if $\mathcal Q$ is a stopping collection   there holds
\[
\sup_{L \in \mathcal Q} \l h \r_{p,L} \leq 2^{5d} \|h\|_{\mathcal Y_p(\mathcal Q)}.
\]
Therefore, we may decompose $h\in \mathcal Y_p(\mathcal Q)$ as
\[
h = g+ b, \qquad   b=\sum_{L\in \mathcal{Q}} b_{ L}, \qquad b_{ L}=\left( h- { \frac{1}{|L|} \int_{L} h(x) \, \d x}\right) \cic{1}_{L}\]
such that
\[
\|g\|_{\mathcal Y_\infty(\mathcal Q)} \leq  2^{5d}\|h\|_{\mathcal Y_p(\mathcal Q)}, \qquad b\in \dot{ \mathcal{X}}_p(\mathcal{ Q}), \; \|b\|_{\dot{ \mathcal{X}}_p(\mathcal{ Q})} \leq 2^{5d+1}\|h\|_{\mathcal Y_p(\mathcal Q)}.
\]
These are nothing else but the usual properties of the Cald\'eron-Zygmund decomposition rewritten in our context.
\end{remark}
 \subsection{The statement} Before stating our result,  we introduce the notation
\begin{equation} \label{therealLambdaq}
\Lambda_{Q,\mu,\nu}(h_1,h_2): =\Lambda^{\min\{s_{Q},\nu\}}_
\mu(h_1\cic{1}_{Q},h_2) = \Lambda^{\min\{s_{Q},\nu\}}_
\mu(h_1\cic{1}_{Q},h_2\cic{1}_{3Q})
\end{equation}
for all dyadic cubes $Q$; the last equality in \eqref{therealLambdaq} is a consequence of the assumptions on the support of  $K_s$ in \eqref{assker2}.
Furthermore, given a stopping collection $\mathcal Q$ with top $Q$, we define the truncated forms
\begin{equation} \label{therealLambdaQ}\begin{split}
\Lambda_{ \mathcal Q,\mu,\nu}(h_1,h_2)& := \Lambda_{Q,\mu,\nu}(h_1,h_2) - \sum_{{\substack{L\in \mathcal Q\\ L\subset Q}}} \Lambda_{L,\mu,\nu}(h_1,h_2)  = \Lambda_{ \mathcal Q,\mu,\nu}(h_1\cic{1}_Q,h_2\cic{1}_{3Q}).\end{split}
\end{equation}
Again, the last equality is due to the support of  $K_s$ in \eqref{assker2}.
A further consequence of assumptions \eqref{assker2}, \eqref{asstrunc} is that the forms $\Lambda_{ \mathcal Q,\mu,\nu}$ satisfy uniform bounds on $\mathcal Y_r(\mathcal Q) \times \mathcal Y_{r'}(\mathcal Q) $.
\begin{lemma} \label{lemma0} There exists a  positive absolute  constant $
\vartheta$ such that   
\[
\left|\Lambda_{ \mathcal Q,\mu,\nu}(h_1,h_2) \right|\leq 2^{\vartheta d}C_{\mathsf{T}}(r)|Q| \|h_1\|_{\mathcal Y_r(\mathcal Q)} \|h_2\|_{\mathcal Y_{r'}(\mathcal Q)}\]
uniformly over all $\mu,\nu,$ all  dyadic cubes  $Q$ and stopping collections  $\mathcal Q $ with top $  Q$.  
\end{lemma}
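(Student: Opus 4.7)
The proof is a direct consequence of the uniform truncation bound \eqref{asstrunc}, combined with the way local averages of functions in $\mathcal{Y}_p(\mathcal{Q})$ are controlled by the ambient norm. The plan is as follows. From the definition \eqref{therealLambdaQ} and the triangle inequality,
\[
|\Lambda_{\mathcal Q,\mu,\nu}(h_1,h_2)| \leq |\Lambda_{Q,\mu,\nu}(h_1,h_2)| + \sum_{\substack{L\in \mathcal Q\\L\subset Q}} |\Lambda_{L,\mu,\nu}(h_1,h_2)|,
\]
and each term can be estimated by feeding its $L^r\times L^{r'}$ data directly to \eqref{asstrunc}.

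For the head term, \eqref{asstrunc} gives $|\Lambda_{Q,\mu,\nu}(h_1,h_2)|\leq C_{\mathsf{T}}(r) \|h_1 \cic{1}_{Q}\|_r \|h_2 \cic{1}_{3Q}\|_{r'}$. I would control these factors by splitting $Q = (Q\setminus \mathsf{sh}\mathcal{Q})\cup(Q\cap \mathsf{sh}\mathcal Q)$ and bounding the first piece via the $L^\infty$ component of $\|\cdot\|_{\mathcal Y_r}$, and the second via the average bound $\langle h_1\rangle_{r,L}\leq 2^{5d}\|h_1\|_{\mathcal Y_r}$ from Remark \ref{CZrem}. Summing the latter over the pairwise disjoint stopping cubes, which satisfy $\bigcup_L L\subset 3Q$ and therefore $\sum_L |L|\leq 3^d|Q|$, yields $\|h_1\cic{1}_Q\|_r\lesssim |Q|^{1/r}\|h_1\|_{\mathcal Y_r}$. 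The analogous argument applied over $3Q$ produces $\|h_2\cic{1}_{3Q}\|_{r'}\lesssim |Q|^{1/r'}\|h_2\|_{\mathcal Y_{r'}}$, which together deliver the desired bound for the head term.

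For each summand in the tail, \eqref{asstrunc} similarly yields $|\Lambda_{L,\mu,\nu}(h_1,h_2)|\leq C_{\mathsf T}(r) \|h_1\cic{1}_L\|_r \|h_2\cic{1}_{3L}\|_{r'}$, and the first factor is at most $2^{5d}|L|^{1/r}\|h_1\|_{\mathcal Y_r}$ directly from Remark \ref{CZrem}. The one delicate step is estimating $\|h_2\cic{1}_{3L}\|_{r'}$ by $|L|^{1/r'}\|h_2\|_{\mathcal Y_{r'}}$, since $3L$ is not itself a stopping cube. The key observation is the inclusion $3L\subset \widehat L = 2^5 L$: for each $x\in \widehat L$ one has $\mathrm M_{r'}h_2(x)\geq \langle h_2\rangle_{r',\widehat L}$, so that $\langle h_2\rangle_{r', \widehat L}\leq \inf_{x\in \widehat L}\mathrm M_{r'}h_2(x)\leq \|h_2\|_{\mathcal Y_{r'}}$, and the volume ratio $|\widehat L|/|3L|=(32/3)^d$ promotes this to $\langle h_2\rangle_{r',3L}\lesssim \|h_2\|_{\mathcal Y_{r'}}$. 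Summing the per-$L$ bound and invoking $\sum_L|L|\leq 3^d|Q|$ one final time collapses the tail to the required dimensional multiple of $C_{\mathsf T}(r)|Q|\|h_1\|_{\mathcal Y_r}\|h_2\|_{\mathcal Y_{r'}}$. The only technical care required throughout is this control of a dilated cube by the $\mathcal Y_p$ norm, which succeeds precisely because the inflation factor $2^5$ in the definition of $\widehat L$ comfortably dominates the factor $3$ appearing in $3L$.
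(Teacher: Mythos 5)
Your argument is correct and follows essentially the same route as the paper: split $\Lambda_{\mathcal Q,\mu,\nu}$ into the top form $\Lambda_{Q,\mu,\nu}$ plus the sum over $L\in\mathcal Q$, $L\subset Q$, apply the uniform truncation bound \eqref{asstrunc} to each piece after restricting supports, and convert the resulting $L^r\times L^{r'}$ norms into $\mathcal Y_r\times\mathcal Y_{r'}$ norms using disjointness of the stopping cubes and $\sum_{L\in\mathcal Q}|L|\leq 3^d|Q|$. The only difference is that you spell out the dilation step $3L\subset\widehat L=2^5L$ and the splitting of $Q$ (resp.\ $3Q$) into $\mathsf{sh}\mathcal Q$ and its complement, which is exactly what the paper's $\lesssim$ signs leave implicit (cf.\ Remark \ref{CZrem}).
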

\begin{proof}  We may estimate the first term in the definition \eqref{therealLambdaQ} as follows:
\begin{equation} \label{LambdaQsplit1}
|\Lambda_{Q,\mu,\nu}(h_1 ,h_2)| \leq C_{\mathsf{T}}(r) \|h_1\cic{1}_{Q}\|_{r}\|h_2\cic{1}_{3Q}\|_{r'} \lesssim    C_{\mathsf{T}}(r) |Q|\|h_1\|_{\mathcal Y_r} \|h_2\|_{\mathcal Y_{r'}}.
\end{equation}
Further,  using the support condition in \eqref{therealLambdaq} with $L$ in place of $Q$ and  the  disjointness property \eqref{stopprop} in the last step, we obtain
\[\begin{split}  
\sum_{L\in \mathcal Q: L\subset Q}  |\Lambda_{   L,\mu,\nu}(h_1 ,h_2)|& = \sum_{L\in \mathcal Q: L\subset Q}  |\Lambda_{ L,\mu,\nu}(h_1\cic{1}_{L} ,h_2\cic{1}_{3L})| \leq  C_{\mathsf{T}}(r)\sum_{L\in \mathcal Q: L\subset Q} \|h_1\cic{1}_{L}\|_{r}\|h_2\cic{1}_{3L}\|_{r'}   \\ &   \lesssim  C_{\mathsf{T}}(r) \|h_1\|_{\mathcal Y_r} \|h_2\|_{\mathcal Y_{r'}} \sum_{L\in \mathcal Q} |L| \lesssim   C_{\mathsf{T}}(r) |Q| \|h_1\|_{\mathcal Y_r} \|h_2\|_{\mathcal Y_{r'}}.
\end{split}
\]
The proof of the lemma is thus completed by combining \eqref{LambdaQsplit1} with the last display. 
\end{proof}

Our main theorem hinges upon estimates which are modified versions of the one occurring in Lemma \ref{lemma0}, when one of the two arguments of $\Lambda_{\mathcal Q,\mu,\nu}$ belongs to $\mathcal X$-type localized spaces.
\begin{theorem}
\label{theoremABS} There exists a  positive absolute constant $\Theta$ such that the following holds.
Let $\Lambda$ be a bilinear form satisfying    \eqref{assker2} and \eqref{asstrunc} above.
Assume that   there exist $1\leq p_1,p_2<\infty$ and a positive constant $C_{\mathsf{L}}$ such that  the estimates
\begin{equation} 
\label{lest} \tag{$\mathsf{L}$}
\begin{split}
&\left| \Lambda_{ \mathcal Q,\mu,\nu}(b,h)\right|   \leq C_{\mathsf{L}}   |Q| \|b\|_{\dot{\mathcal X}_{p_1}(\mathcal Q)}   \|h\|_{\mathcal Y_{p_2}(\mathcal Q) }, \\
& \left| \Lambda_{ \mathcal Q,\mu,\nu}(h,b)\right|   \leq C_{\mathsf{L}} |Q| \|h\|_{{\mathcal Y}_{\infty}(\mathcal Q)}   \|b\|_{\dot{\mathcal X}_{p_2}(\mathcal Q) }, 
\end{split} \end{equation}
hold uniformly over all $\mu,\nu\in\mathbb Z$, all dyadic lattices $\mathcal D$, all $Q\in \mathcal D$ and all stopping collections $\mathcal Q\subset \mathcal D$ with top $Q$. 
 Then  the estimate
\begin{equation}
\label{thABSest}
\sup_{\mu,\nu\in \mathbb Z}\left|\Lambda_\mu^\nu(f_1,f_2)\right| \leq 2^{\Theta d} \Big[
C_{\mathsf{T}}(r) +C_{\mathsf{L}}  \Big] \sup_{\mathcal S}\mathsf{PSF}_{\mathcal S;{p}_1,{p}_2}(f_1, f_2), 
\end{equation}
  holds for all $f_j\in   L^{{p}_j}(\R^d)$ with compact support, $j=1,2$.
\end{theorem}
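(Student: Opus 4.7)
The plan is to prove Theorem \ref{theoremABS} by a recursive Calder\'on-Zygmund type construction, producing the sparse family $\mathcal S$ one generation at a time, in the spirit of \cite{CuDPOu,CuDPOu2}. The heart of the argument is a single \emph{recursive step}: given a dyadic cube $Q\in\mathcal D$ and fixed truncation parameters $\mu\leq \nu$, one builds a stopping collection $\mathcal Q\subset \mathcal D$ with top $Q$ such that $\sum_{L\in\mathcal Q}|L|\leq \tfrac12|Q|$ and
\[
\bigl|\Lambda_{\mathcal Q,\mu,\nu}(f_1\cic{1}_Q,f_2\cic{1}_{3Q})\bigr|\leq 2^{\Theta d}\bigl[C_{\mathsf T}(r)+C_{\mathsf L}\bigr]|Q|\,\l f_1\r_{p_1,Q}\l f_2\r_{p_2,Q}.
\]
The cubes of $\mathcal Q$ are then used as the next generation of tops; iterating, while adding every top to $\mathcal S$, produces the sparse family, and the packing $\sum_L|L|\leq(1-\eta)|Q|$ yields the pairwise disjoint major subsets $E_Q=Q\setminus\bigsqcup L$ required for sparseness.

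To build $\mathcal Q$, I first take the maximal dyadic subcubes $L\subset 3Q$ on which either $\l f_1\r_{p_1,L}>A\l f_1\r_{p_1,Q}$ or $\l f_2\r_{p_2,L}>A\l f_2\r_{p_2,Q}$, for a large dimensional constant $A$; the weak-$L^{p_j}$ bound for $\mathrm M_{p_j}$ gives $\sum|L|\leq \tfrac12|Q|$. This family is then regularized to enforce the geometric separation in \eqref{separation}, merging or enlarging whenever a scale gap of $\geq 8$ is not matched by $7L\cap 7L'=\varnothing$; a standard Whitney-type argument shows the enlargement costs only a bounded dimensional factor in the packing constant. For the resulting $\mathcal Q$, the Calder\'on-Zygmund splitting of Remark \ref{CZrem} applied to $f_j\cic{1}_{3Q}\in\mathcal Y_{p_j}(\mathcal Q)$ yields $f_j\cic{1}_{3Q}=g_j+b_j$ with $g_j\in\mathcal Y_\infty(\mathcal Q)$, $b_j\in\dot{\mathcal X}_{p_j}(\mathcal Q)$, and both $\|g_j\|_{\mathcal Y_\infty}$ and $\|b_j\|_{\dot{\mathcal X}_{p_j}}$ comparable to $\l f_j\r_{p_j,Q}$ by the stopping construction.

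Expanding bilinearly,
\[
\Lambda_{\mathcal Q,\mu,\nu}(f_1,f_2)=\Lambda_{\mathcal Q,\mu,\nu}(g_1,g_2)+\Lambda_{\mathcal Q,\mu,\nu}(b_1,g_2)+\Lambda_{\mathcal Q,\mu,\nu}(g_1,b_2)+\Lambda_{\mathcal Q,\mu,\nu}(b_1,b_2).
\]
The $(g_1,g_2)$ piece is bounded by Lemma \ref{lemma0} together with the trivial embedding $\|g_j\|_{\mathcal Y_r}\leq \|g_j\|_{\mathcal Y_\infty}$, contributing a factor of $C_{\mathsf T}(r)$. Each term with a $b_1$ entry is handled by the first inequality of \eqref{lest}, taking $h=g_2$ or $h=b_2$ and using $\|b_2\|_{\mathcal Y_{p_2}}=\|b_2\|_{\dot{\mathcal X}_{p_2}}$. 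The remaining $(g_1,b_2)$ piece uses the second inequality of \eqref{lest}, which is tailored to the case $h\in\mathcal Y_\infty$. Each term is a dimensional multiple of $\bigl(C_{\mathsf T}(r)+C_{\mathsf L}\bigr)|Q|\l f_1\r_{p_1,Q}\l f_2\r_{p_2,Q}$, closing the recursive step.

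The initialization picks a dyadic top cube $Q_0\supset\supp f_1\cup\supp f_2$ with $\ell(Q_0)>2^{\nu+2}$, so that the annular support of the $K_s$ in \eqref{assker2} identifies $\Lambda_\mu^\nu(f_1,f_2)$ with $\Lambda_{Q_0,\mu,\nu}(f_1,f_2)$ (a limiting argument as in Remark \ref{remlimarg1} removes the finite-truncation hypothesis at the end). Iterating the defining identity \eqref{therealLambdaQ} telescopes the bilinear form into a sum over the generated $\mathcal S$, and summing the per-cube bounds produces \eqref{thABSest} uniformly in $\mu,\nu$. The main obstacle I anticipate is the construction of $\mathcal Q$: one must simultaneously stop on concentrations of both $f_1$ and $f_2$, enforce the geometric separation conditions \eqref{separation} (including the subtle absorption $9L\subset \mathsf{sh}\,\mathcal Q$ for any $L$ meeting $2Q$), and still retain $\mathrm M_{p_j}f_j\lesssim\l f_j\r_{p_j,Q}$ off $\mathsf{sh}\,\mathcal Q$ so that the Calder\'on-Zygmund bounds on $g_j,b_j$ come out in terms of the $Q$-averages. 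Once this geometric construction is in place, the rest of the proof is a mechanical combination of Lemma \ref{lemma0} and the localized bounds \eqref{lest}.
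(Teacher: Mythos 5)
Your overall architecture is the same as the paper's: iterate stopping collections, apply the Calder\'on--Zygmund splitting of Remark \ref{CZrem} to $f_j\cic{1}_{3Q}$, expand into the four terms $(g_1,g_2),(b_1,g_2),(g_1,b_2),(b_1,b_2)$ estimated by Lemma \ref{lemma0} and \eqref{lest}, and telescope through \eqref{therealLambdaQ} (this is exactly Lemma \ref{iterlemma} plus the induction of Subsection \ref{ssiter}). The genuine gap is precisely at the point you flag as the anticipated obstacle: your construction of $\mathcal Q$ by stopping on dyadic averages $\l f_j\r_{p_j,L}>A\l f_j\r_{p_j,Q}$ and then ``merging or enlarging'' does not deliver the properties the rest of the argument consumes. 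To get $\|g_j\|_{\mathcal Y_\infty},\|b_j\|_{\dot{\mathcal X}_{p_j}}\lesssim \l f_j\r_{p_j,3Q}$ you need $\|f_j\cic{1}_{3Q}\|_{\mathcal Y_{p_j}(\mathcal Q)}\lesssim \l f_j\r_{p_j,3Q}$, and the $\mathcal Y_{p_j}$ norm contains the term $\sup_{L\in\mathcal Q}\inf_{x\in\widehat L}\mathrm{M}_{p_j}(f_j\cic{1}_{3Q})(x)$, which involves the \emph{non-dyadic} maximal function on the large dilate $\widehat L$. Maximality of dyadic averages only controls the dyadic ancestors of $L$, and this does not bound that term: place $L$ adjacent to a dyadic hyperplane of very high generation and put a concentrated mass just across that hyperplane, within distance $\ell(L)$ of $L$; then $\mathrm{M}_{p_j}(f_j\cic{1}_{3Q})$ exceeds any fixed multiple of $\l f_j\r_{p_j,3Q}$ at \emph{every} point of $\widehat L$, while every dyadic ancestor of $L$ (the smallest one containing the mass being enormous) still has small average, so $L$ remains a selected maximal cube and the $\mathcal Y_{p_j}$ bound fails. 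Likewise, the absorption property in the second line of \eqref{separation}, $\bigcup_{3L\cap 2Q\neq\emptyset}9L\subset\sh\mathcal Q$, is not produced by merging/enlarging, and it is not cosmetic: it underlies the representation \eqref{Kokadj}--\eqref{GNI} of Appendix \ref{ssadj}, i.e.\ the very mechanism through which the second estimate in \eqref{lest} is brought to bear on the $(g_1,b_2)$ term; enlargement also threatens both the packing constant and the dyadic structure.

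The paper resolves all of this at once by stopping on the maximal functions rather than on averages: set $E_Q=\{x\in3Q:\max_j \mathrm{M}_{p_j}(f_j\cic{1}_{3Q})(x)/\l f_j\r_{p_j,3Q}>2^{\Theta d/4}\}$ as in \eqref{EQ} and let $\mathcal Q$ consist of the maximal dyadic cubes $L$ with $9L\subset E_Q$. The maximal theorem gives $|E_Q|\leq 2^{-\vartheta d}|Q|$ (packing and sparseness), maximality gives the neighbor and separation properties of \eqref{separation} for free and forces the $9$-fold dilate of the dyadic parent of each $L$ to meet $E_Q^c$, which is exactly the missing bound $\inf_{\widehat L}\mathrm{M}_{p_j}(f_j\cic{1}_{3Q})\leq 2^{\Theta d/4}\l f_j\r_{p_j,3Q}$, while $E_Q\subset\sh\mathcal Q$ controls $\|f_j\cic{1}_{3Q\setminus\sh\mathcal Q}\|_\infty$. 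A second, more minor, point: the paper performs one global Whitney decomposition per generation of $\bigcup_{Q\in\mathcal S_k}E_Q$ and uses the neighbor property to prove the measure estimate \eqref{sparsek1}; your per-top recursion can be organized consistently because \eqref{therealLambdaQ} only recurses on children $L\subset Q$, but the disjointness bookkeeping for the sets $E_Q$ still has to be checked once stopping cubes spill into $3Q\setminus Q$. With the stopping family constructed as above, the remainder of your proposal coincides with the paper's proof.
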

\begin{remark} \label{remABS}      By the  limiting argument of Remark \ref{remlimarg1}, the conclusion    \eqref{thABSest}  entails that
\begin{equation}
\label{thABSest2}
\left|\Lambda(f_1,f_2)\right| \leq 2^{\Theta d} \Big[
C_{\mathsf{T}}(r) +C_{\mathsf{L}}  \Big] \sup_{\mathcal S}\mathsf{PSF}_{\mathcal S;{p}_1,{p}_2}(f_1, f_2)
\end{equation}
when $f_1,f_2\in L^{\infty}(\R^d) $ with  compact support (say). If we know that $\Lambda$ extends boundedly to $L^{t}(\R^d)\times   L^{t'}(\R^d)$ for some $1<t<\infty$, another simple limiting argument using the dominated convergence theorem extends \eqref{thABSest2} to all $f_1\in L^{t}(\R^d)$, $f_2\in L^{t'}(\R^d)$. It is in this last form that Theorem \ref{theoremABS} will be applied to deduce Theorems \ref{theoremRH} and \ref{theoremBR}.
\end{remark}
\begin{remark}[A comparison between sparse domination principles]\label{Remphy} Theorem \ref{theoremABS} identifies rather clearly  the  conditions needed for sparse domination of a  kernel operator $T$, namely  the adjoint of the bilinear form $\Lambda$.   
Condition  \eqref{lest} is a localized reformulation of the \emph{constant-mean zero} cancellation around which $L^p,p\neq 2$ Calder\'on-Zygmund theory revolves, and it is essentially a strengthening of the weak-$L^{p_j}$ estimate for    $T$ ($j=1$)  and its adjoint ($j=2$).
Further, our assumption of uniform $L^r$-boundedness of   the   truncations in        \eqref{asstrunc} is much tamer than requiring $L^r$-boundedness of   the maximal truncations of $T$. In fact, our {theorem can be applied  even when no estimates for maximal truncations of $T$ are known}.

Of course the exponents  $p_j$ enter   the sparse domination estimate \eqref{thABSest}, while    the exponent $r$  occurring in   \eqref{asstrunc} does not.
  This is in contrast with the other sparse domination principles occurring in the literature. For instance, in \cite[Theorem 4.2]{Ler2015},   a sparse domination of type \eqref{sparsegen} with exponents $(r,1)$ is obtained for operators $T$ whose \emph{grand maximal function}
\[
 \mathcal M_T f(x):=\sup_{ Q\ni x} \sup_{y\in Q} \big|T\big(f\cic{1}_{\R^d\setminus 3Q}\big)(y)\big|
\]
has the weak-$L^r$ bound for some $r\geq 1$. Notice that $\mathcal M_T $ may be as large as the maximal truncation of $T$.

A further comparison can be drawn with the abstract result of \cite{BFP}, which is a sparse domination principle for non-integral singular   operators. The off-diagonal estimate assumption \cite[Theorem 1.1(b)]{BFP} is a clear counterpart of 
\eqref{assker2}, while the maximal truncation assumption \cite[Theorem 1.1(c)]{BFP} is the non-kernel analogue of the grand maximal function from \cite{Ler2015}. It would be interesting to investigate whether, in the non-kernel setting of \cite{BFP}, an assumption in the vein of \eqref{lest} can be used instead.
\end{remark}

\begin{remark}[The essence of \eqref{lest}] 
Let $\mathcal{Q}$ be a stopping collection with top $Q$. When $b$ belongs to an $ \mathcal X_{\alpha}(\mathcal{Q})$-type space,  the forms
\[
  (b,h) \mapsto \Lambda_{ \mathcal{Q},\mu,\nu}(b,h), \qquad  (b,h)  \mapsto \Lambda_{ \mathcal{Q},\mu,\nu}(h,b)
\]
have a much more familiar representation, which is what allows  verification of assumption  \eqref{lest} in practice. By rephrasing the definition, when $b\in \mathcal X_{1} (\mathcal Q)$ is supported on $Q$ (which we can assume with no restriction) we have the equality
\begin{equation}
\label{Kok}
\Lambda_{ \mathcal{Q},\mu,\nu}(b,h)=  \sum_{j\geq 1}\int \sum_{  \mu <s\leq\min\{ s_{ Q},\nu\} }    K_{s}(x,y) b_{s-j}(y)  h(x)  \, \d y   \d x.  
\end{equation}
where
\[
b_s:=\sum_{\substack{
L\in \mathcal Q\\   s_L = s}}   b_L.
\]
This notation will be used throughout the paper: see for instance \eqref{Kokadj} below.
Furthermore, if $q$ is the exponent occurring in \eqref{assker2}, $h\in \mathcal Y_{q'} (\mathcal Q)$, and  $b\in \mathcal X_{q'} (\mathcal Q)$, then $\Lambda_{ \mathcal{Q},\mu,\nu}(h,b)$ is essentially self-adjoint up to a tolerable error term. Namely, if $h$ is supported on $Q$ (which we can also always assume), there holds
\begin{equation}
\label{Kokadj}
\Lambda_{ \mathcal{Q},\mu,\nu}(h, b)= \left( \sum_{j\geq 1} 
\int  \sum_{\mu <s\leq\min\{ s_{ Q},\nu\} }   {K_{s}(y,x)} b^{\mathsf{in}}_{s-j}(y)  h(x)  \, \d y   \d x\right)+ V_{ \mathcal Q }(h, b) 
\end{equation}
where 
\[
b^{\mathsf{in}} = \sum_{\substack{L\in \mathcal Q\\ 3L \cap 2Q \neq \emptyset}} b_L
\] is a truncation of $b$ and thus also belongs to  $  \mathcal X_{q'} (\mathcal Q)$ with $\| b^{\mathsf{in}}\|_{{\mathcal X}_{q'}(\mathcal Q) }\leq \|b\|_{{\mathcal X}_{q'}(\mathcal Q) }$,
and the remainder $V_{ \mathcal Q }(h, b) $ satisfies
\begin{equation} \label{GNI}  \begin{split}
|V_{ \mathcal Q }(h, b)|& \leq 2^{\vartheta d} [K]_{0,q} |Q| \|h\|_{{\mathcal Y}_{q'}(\mathcal Q)}   \|b\|_{{\mathcal X}_{q'}(\mathcal Q) }\end{split}
\end{equation}
for a suitable positive absolute constant $\vartheta$.
The representation \eqref{Kokadj}-\eqref{GNI} is a simple consequence of the structure of $b\in \mathcal X_{q'}(\mathcal{Q})$ and of the separation properties \eqref{stopprop}, \eqref{separation}.
We provide the necessary details for \eqref{Kokadj}-\eqref{GNI} in Appendix \ref{ssadj} at the end.
\end{remark}

\subsection{Proof of Theorem \ref{theoremABS}}  
 Given a form $\Lambda$   satisfying the assumptions of Theorem \ref{theoremABS}, $\mu<\nu \in \mathbb Z$
 and $f_j\in   L^{{p}_j}(\R^d)$, $j=1,2$, with  compact support, we will construct a sparse collection $\mathcal S$ of cubes of $\R^d$ such that    \begin{equation} 
\label{sparse-est}
  \big|\Lambda_\mu^\nu(f_1 ,f_2)\big| \leq 2^{\Theta d}\mathbf{C}\sum_{Q\in \mathcal S} |Q| \langle f_1 \rangle_{{p}_1,Q} \langle f_2 \rangle_{{p}_2,Q}
\end{equation}
  where $\mathbf{C}$ is   the expression within the square brackets in the conclusion of Theorem \ref{theoremABS}.
Here and below, we denote by $\Theta$ a suitably large positive absolute constant which will be chosen during the course of the proof. Within this proof,  we will also denote by $\vartheta$  positive absolute   constants which belong to $[2^{-8}\Theta, 2^{-7}\Theta]$    and may differ at each occurrence.   
As the assumptions of Theorem \ref{theoremABS} are stable if we replace $\Lambda$ with $\Lambda_\mu^\nu$, we can work under the assumption that that $K_s=0$ for all $s\notin (\mu,\nu]$ and thus drop $\mu,\nu$ from the notations \eqref{therealLambdaq}, \eqref{therealLambdaQ}. 

The proof of \eqref{sparse-est} is iterative and is carried out in Subsection \ref{ssiter} below. Here, we enucleate the main estimate for the form $\Lambda^{s_Q}$ from \eqref{therealLambdaq}  in terms of stopping collection norms.
\begin{lemma}\label{iterlemma} Let   $Q$ be a fixed dyadic cube in $\R^d$ and $\mathcal Q$ be a stopping collection with top $Q$. Then
\begin{equation}
\label{iterativeSC} 
 \left|\Lambda^{s_Q}(h_1\cic{1}_Q,h_2\cic{1}_{3Q})\right| \leq  2^{\vartheta d} \mathbf{C}  |Q| \|h_1\|_{ \mathcal{Y}_{{p}_1}(\mathcal Q)}\|h_2\|_{ \mathcal{Y}_{{p}_2}(\mathcal  Q)}  +   \sum_{{\substack{L\in \mathcal Q\\ L\subset Q}} }   \left| \Lambda^{s_L} (h_1\cic{1}_L,h_2\cic{1}_{3L})  \right|\end{equation}
 \end{lemma}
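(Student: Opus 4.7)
\textbf{Proof proposal for Lemma \ref{iterlemma}.} The plan is to pull out the sum over the stopping cubes $L\in\mathcal Q$, $L\subset Q$ algebraically, via the very definition \eqref{therealLambdaQ}, and then estimate the residual $\Lambda_{\mathcal Q}$ by combining the uniform truncation bound of Lemma \ref{lemma0} with the localized cancellation hypothesis \eqref{lest}. Concretely, under the running assumption that $K_s\equiv 0$ for $s\notin(\mu,\nu]$, definition \eqref{therealLambdaq} gives $\Lambda_Q(h_1,h_2)=\Lambda^{s_Q}(h_1\cic{1}_Q,h_2\cic{1}_{3Q})$, and \eqref{therealLambdaQ} rearranges to
\[
\Lambda^{s_Q}(h_1\cic{1}_Q,h_2\cic{1}_{3Q})=\Lambda_{\mathcal Q}(h_1,h_2)+\sum_{\substack{L\in\mathcal Q\\L\subset Q}}\Lambda^{s_L}(h_1\cic{1}_L,h_2\cic{1}_{3L}).
\]
Taking absolute values identifies the second piece on the right-hand side of \eqref{iterativeSC}, so the entire lemma reduces to establishing
\begin{equation}\label{proposal-goal}
|\Lambda_{\mathcal Q}(h_1,h_2)|\leq 2^{\vartheta d}\mathbf{C}\,|Q|\,\|h_1\|_{\mathcal Y_{p_1}(\mathcal Q)}\|h_2\|_{\mathcal Y_{p_2}(\mathcal Q)}.
\end{equation}

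To prove \eqref{proposal-goal}, I would apply the Calder\'on--Zygmund splitting of Remark \ref{CZrem} to \emph{each} of $h_1,h_2$ relative to the common stopping collection $\mathcal Q$, writing $h_j=g_j+b_j$ with $\|g_j\|_{\mathcal Y_\infty(\mathcal Q)}+\|b_j\|_{\dot{\mathcal X}_{p_j}(\mathcal Q)}\lesssim \|h_j\|_{\mathcal Y_{p_j}(\mathcal Q)}$, and bilinearly expand
\[
\Lambda_{\mathcal Q}(h_1,h_2)=\Lambda_{\mathcal Q}(g_1,g_2)+\Lambda_{\mathcal Q}(b_1,g_2)+\Lambda_{\mathcal Q}(g_1,b_2)+\Lambda_{\mathcal Q}(b_1,b_2).
\]
The $(g_1,g_2)$ term is handled by Lemma \ref{lemma0}, since the continuous embedding $\mathcal Y_\infty(\mathcal Q)\hookrightarrow \mathcal Y_r(\mathcal Q)\cap\mathcal Y_{r'}(\mathcal Q)$ controls both $\|g_1\|_{\mathcal Y_r}$ and $\|g_2\|_{\mathcal Y_{r'}}$ by a dimensional multiple of $\|h_j\|_{\mathcal Y_{p_j}}$, yielding the desired estimate with constant $C_{\mathsf T}(r)$.

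The three remaining terms are controlled by the two inequalities in \eqref{lest}. For $(b_1,g_2)$ I would invoke the first inequality with $b=b_1$, $h=g_2$, noting that $g_2\in\mathcal Y_\infty\subset\mathcal Y_{p_2}$ with the correct norm control. For $(g_1,b_2)$ I would invoke the second inequality, which is tailored exactly to the case when the second slot carries the cancellation and the first is merely $L^\infty$. For the diagonal $(b_1,b_2)$ term --- the only mildly delicate one, since neither argument is bounded --- I would again use the first inequality of \eqref{lest}, observing that $b_2\in\dot{\mathcal X}_{p_2}\subset\mathcal Y_{p_2}$ with identical norm, so that the bound
\[
|\Lambda_{\mathcal Q}(b_1,b_2)|\leq C_{\mathsf L}|Q|\,\|b_1\|_{\dot{\mathcal X}_{p_1}}\|b_2\|_{\mathcal Y_{p_2}}
\]
still reduces to the $h_j$-norms via Remark \ref{CZrem}. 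Summing the four contributions produces \eqref{proposal-goal} with constant a dimensional multiple of $C_{\mathsf T}(r)+C_{\mathsf L}=\mathbf C$.

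The only substantive obstacle in this scheme is purely bookkeeping: one must ensure that in each of the four pieces the two arguments genuinely belong to the function spaces demanded by the relevant hypothesis. The potentially problematic $(b_1,b_2)$ slot is resolved by the $\dot{\mathcal X}_{p_2}\hookrightarrow\mathcal Y_{p_2}$ embedding, which is built into the definitions and allows one to apply the ``mean-zero-on-the-left'' half of \eqref{lest} even when the right-hand function also happens to be mean zero. No quantitative estimate beyond Lemma \ref{lemma0}, the hypothesis \eqref{lest}, and the Calder\'on--Zygmund bounds from Remark \ref{CZrem} is required.
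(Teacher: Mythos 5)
Your proposal is correct and follows essentially the same route as the paper: the algebraic rearrangement of \eqref{therealLambdaQ} to isolate the sum over $L\subset Q$, the Calder\'on--Zygmund splitting of both $h_1$ and $h_2$ from Remark \ref{CZrem}, the bilinear expansion into four terms, Lemma \ref{lemma0} for $(g_1,g_2)$, and the two halves of \eqref{lest} for the remaining three terms, with the $(b_1,b_2)$ term handled exactly as you describe via $\dot{\mathcal X}_{p_2}\subset\mathcal Y_{p_2}$. No gaps.
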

 \begin{proof} We are free to assume that $\supp h_1\subset Q, \supp h_2\subset 3Q$ for simplicity of notation. 
For $j=1,2$, construct the Calder\'on-Zygmund decomposition of $h_j$ with respect to the family $\mathcal{Q}$ as described in Remark \ref{CZrem}, that is 
\[
h_j = g_j+ b_j, \qquad   b_j=\sum_{L\in \mathcal{Q}} b_{j L}, \qquad b_{jL}:=\left( h_j- { \frac{1}{|L|} \int_{L} h_j(x) \, \d x}\right)\cic{1}_L,
\]
The Calder\'on-Zygmund properties in this context are, for $j=1,2$,
\[
\|g_j\|_{\mathcal{Y}_\infty} \lesssim \|h_j\|_{ \mathcal{Y}_{{p}_j} }, \qquad \| b_j\|_{\dot{\mathcal X}_{p_j} } \lesssim   \|h_j\|_{ \mathcal{Y}_{{p}_j}}.\]
Using the definition \eqref{therealLambdaQ}, we decompose on our way to \eqref{iterativeSC}
\begin{equation}\label{splitting}
\begin{split}
&\quad \Lambda^{s_{Q}}(h_1, h_2)  \\
 &=  {\Lambda}_{ \mathcal Q}(h_1, h_2) +  \sum_{{\substack{L\in \mathcal Q\\ L\subset Q}} } \Lambda^{s_L}(h_1\cic{1}_L,h_2)       \\
 &=     \Lambda_{ \mathcal Q}(g_1, g_2)   + 
\Lambda_{ \mathcal Q}(b_1, g_2)   + 
\Lambda_{ \mathcal Q}(g_1, b_2) + 
\Lambda_{ \mathcal Q}(b_1, b_2 ) +\sum_{{\substack{L\in \mathcal Q\\ L\subset Q}} }  \Lambda^{s_L}(h_1\cic{1}_L,h_2\cic{1}_{3L})   
\end{split}
\end{equation}
 The last  sum on the last right hand side is estimated by the sum appearing on the right hand side of \eqref{iterativeSC}.   
We are left with estimating the first four  terms in  the last line of \eqref{splitting}.
 The leftmost is controlled by the estimate of Lemma \ref{lemma0}:
 \[
\begin{split} |\Lambda_{ \mathcal Q}(g_1, g_2) |&  \lesssim C_{\mathsf{T}}(r)   |Q| \|g_1\|_{\mathcal Y_r} \|g_2\|_{\mathcal Y_{r'}  }  \lesssim \mathbf{C}|Q|\|h_1\|_{ \mathcal{Y}_{{p}_1}} \|h_2\|_{ \mathcal{Y}_{{p}_2}}.
\end{split} 
\]
The second term is handled by appealing to assumption $(\mathsf L)$, which yields \[ 
\begin{split}
| \Lambda_{ \mathcal Q}(b_1, g_2)| &\leq  
 C_{\mathsf{L}}   |Q| \|b_1\|_{\dot{\mathcal X}_{p_1} }  \|g_2\|_{\mathcal Y_{p_2} }  \lesssim \mathbf{C}  |Q| \|h_1\|_{ \mathcal{Y}_{{p}_1}} \|h_2\|_{ \mathcal{Y}_{{p}_2}}
 \end{split}  
\]
where the second estimate follows from   the Calder\'on-Zygmund properties above.
The third is also  estimated by appealing to  $(\mathsf L)$, as
\[
\begin{split}
|\Lambda_{ \mathcal Q}(g_1, b_2)| &\leq  
 C_{\mathsf{L}}  |Q| \|g_1\|_{\mathcal Y_\infty }  \|b_2\|_{\dot{\mathcal X}_{p_2} }  \lesssim \mathbf{C}|Q|  \|h_1\|_{ \mathcal{Y}_{{p}_1}} \|h_2\|_{ \mathcal{Y}_{{p}_2}}.
 \end{split}  
\]
Finally, again by assumption $(\mathsf L)$,
\[ \begin{split} 
|\Lambda_{ \mathcal Q}(b_1, b_2)|& \leq
 C_{\mathsf{L}}  |Q_0| \|b_1\|_{\dot{\mathcal X}_{p_1} }  \|b_2\|_{\mathcal Y_{p_2} }    \lesssim   \mathbf{C}   |Q| \|h_1\|_{ \mathcal{Y}_{{p}_1}} \|h_2\|_{ \mathcal{Y}_{{p}_2}},\end{split}
\]
where the final inequality follows again from   the Calder\'on-Zygmund estimates. The proof of Lemma \ref{iterlemma} is thus complete. \end{proof}

\subsection{Proof of \eqref{sparse-est}} \label{ssiter} The proof is obtained by means of the iterative procedure described below.
\vskip1mm \noindent  \textsc{Preliminaries}.
We will produce stopping collections iteratively, by suitable Whitney decompositions of unions of  sets
\begin{equation}
\label{EQ} E_{Q} =\left\{x\in 3Q: \max_{j=1,2} \frac{\mathrm{M}_{{p}_j} (f_j\cic{1}_{3Q})(x)}{\l f_j \r_{{p}_j,3Q}}> 2^{\frac{\Theta d}{4} }\right\}
\end{equation}
associated to a cube $Q$ and a pair of functions $f_1,f_2$. We notice  that
\begin{equation}
\label{EQest}E_Q \subset 3Q, \qquad  |E_{Q}| \leq 2^{-\vartheta d} |Q|;
\end{equation}
the measure estimate is a consequence of   the maximal theorem, and holds provided $\Theta$ is chosen sufficiently large.
  In this proof, we say that two dyadic cubes $L,L'$ are \emph{neighbors}, and  write $L \sim L'$, if 
\[
7L\cap 7L' \neq \emptyset, \qquad |s_L-s_{L'}| < 8.
\]
The separation condition \eqref{separation} tells us that if the $7$-fold dilates  of two cubes  $L,L'$ belonging to the same stopping collection intersect nontrivially, then $L,L'$ must be neighbors.
We also recall the notation $\widehat L$ for the $2^{5}$-fold dilate of $L$.
\vskip1mm \noindent  \textsc{Initialize}: let $f_j\in   L^{{p}_j}(\R^d)$, $j=1,2$, with  compact support be fixed. By suitably choosing the dyadic lattice $\mathcal D$,  we may find $Q_0\in \mathcal D$ such that $\supp f_1 \subset Q_0$, $\supp f_2 \subset 3 Q_0$ and $s_{Q_0}$ is larger than the largest nonzero scale occurring in the kernel. Then set $\mathcal S_0=\{Q_0\}$, $E_0=3Q_0$, and define referring to \eqref{EQ} 
\[\begin{split}
& E_1:=E_{Q_0},\\ 
&\mathcal S_1:= \textrm{maximal   cubes } L \in \mathcal D \textrm{ such that } 9L \subset  E_{1}.
\end{split}
\]
 Notice that the following properties are satisfied:
\begin{align} \label{disj0} & L\in \mathcal S_1 \textrm{ are a pairwise disjoint collection},\\
\label{sparse0} &   E_1 = \bigcup_{L \in \mathcal S_1} L = \bigcup_{L \in \mathcal S_1} 9L\subset E_{0}, \qquad \left|Q_0\setminus E_1 \right|   \geq \left(1-2^{-d\vartheta}\right)|Q_0|, \\
\label{neighbor0} &   L,L'\in \mathcal S_1, \,7L\cap 7L'\neq \emptyset\implies L \sim L' . \end{align}
 Properties 
 \eqref{disj0} and the first part of \eqref{sparse0} are by construction, while the second part of \eqref{sparse0} follows from the estimate of \eqref{EQest}. For \eqref{neighbor0}  suppose instead that $7L \cap 7L'$ is not empty when $s_L\leq s_{L'}-8$. By the relation between the sidelengths it follows  that $\widehat L \subset 9L'$, which implies that the $9$-fold dilate of the dyadic parent of $L$ is contained in $9L'$ as well,   contradicting the maximality of $L$. By virtue of \eqref{disj0}--\eqref{neighbor0}, $\mathcal Q_1(Q_0):=\mathcal S_1$ is a stopping collection with top $Q_0$; compare with \eqref{stopprop}, \eqref{separation}.
The first property in \eqref{sparse0} guarantees that 
\[
\sup_{x \not \in \mathsf{sh} \mathcal{ Q}_{1} (Q_0)} |f_j(x)| \leq 2^{\frac{\Theta d}{4}} \l f_j\r_{{p}_j,3Q_0}. 
\]
Further, by the maximality condition on $L\in \mathcal S_1$, it follows that
\[
\sup_{L \in \mathcal Q_1(Q_0)} \inf_{\widehat L}\,  \mathrm{M}_{{p}_j} (f_j\cic{1}_{3Q_0})  \leq 2^{\frac{\Theta d}{4}} \l f_j\r_{{p}_j,3Q_0} 
\]
for $j=1,2$. The last two inequalities tell us that 
\[
\|f_j\|_{\mathcal Y_{{p}_j}(\mathcal Q_1(Q_0))} \leq 2^{\frac{\Theta d}{4}} \l f_j\r_{{p}_j,3Q_0}, \qquad j=1,2.
\]
Applying \eqref{iterativeSC} to the stopping collection  $\mathcal Q_1(Q_0)$, and $h_1=f_1,h_2=f_2$  we obtain
\begin{equation}
\label{iterative2SC0st}
\begin{split} & \quad  \big|\Lambda(f_1 ,f_2)\big|=  
   \big|\Lambda^{s_{Q_0}}(f_1\cic{1}_{Q_0} ,f_2\cic{1}_{3Q_0})\big| \\ & \leq 2^{\Theta d}\mathbf{C} |Q_0| \langle f_1 \rangle_{{p}_1,3Q_0} \langle f_2 \rangle_{{p}_2,3 Q_0} +
    \sum_{\substack{L \in {\mathcal Q}_1(Q_0)\\ L\subset Q_0}}   \left|\Lambda^{s_{L}} (f_1\cic{1}_L,f_2\cic{1}_{3L})  \right|.  \end{split}\end{equation}
The obtained properties   \eqref{disj0}--\eqref{neighbor0} and estimate \eqref{iterative2SC0st} are the $\ell=1$ case of the induction assumption in the inductive step below.  
\vskip1mm \noindent
\textsc{Inductive step}: Suppose inductively collections $\mathcal S_\ell$, $0\leq \ell\leq k$ and sets $E_\ell$, $1\leq \ell \leq k$  have been constructed, with the properties that for all $1\leq \ell \leq k$ 
\begin{align}
\label{disjk} & L\in \mathcal S_\ell \textrm{ are a pairwise disjoint collection},\\
\label{sparsek} 
& E_\ell = \bigcup_{L \in \mathcal S_\ell} L = \bigcup_{L \in \mathcal S_\ell}9 L  \subset E_{\ell-1}, \qquad |Q\setminus E_\ell|\geq \left(1-2^{-\vartheta d}\right) |Q| \quad \forall Q \in \mathcal S_{\ell-1},    \\
\label{neighbork} &   L,L'\in \mathcal S_\ell,\; 7L\cap 7L'\neq \emptyset\implies L\sim L'  .  
\end{align}
Suppose also that if $\mathcal T_{k-1}=\mathcal S_0\cup\cdots \cup \mathcal S_{k-1}$, the estimate
\begin{equation}
\label{iterative2SC1} 
  \big|\Lambda(f_1 ,f_2)\big| \leq 2^{\Theta d} \mathbf{C} \sum_{R\in \mathcal T_{k-1}}|R| \langle f_1 \rangle_{{p}_1,3R} \langle f_2 \rangle_{{p}_2,3R} +
     \sum_{Q \in {\mathcal S}_k} \left|\Lambda^{s_{Q}} (f_1\cic{1}_Q,f_2\cic{1}_{3Q}) \right| \end{equation}
has been shown to hold.   
At this point define
\[
\begin{split}
&E_{k+1}:=\bigcup_{Q\in \mathcal S_k} E_{Q},  
\\
&\mathcal S_{k+1}:= \textrm{maximal  cubes } L \in \mathcal D \textrm{ such that } 9L \subset  E_{k+1},\\ 
& \mathcal Q_{k+1} (Q) =\{L \in \mathcal S_{k+1}: L\subset 3Q \}, \qquad Q\in \mathcal S_k.
\end{split}
\]
Property \eqref{disjk}, together with the first property in \eqref{sparsek}, as $E_Q\subset 3Q\subset E_{k}$, and  \eqref{neighbork}, via  the same reasoning we used for \eqref{neighbor0}, now hold for $\ell=k+1$ as well. Let now $Q\in \mathcal S_k$. Property \eqref{neighbork} with $\ell=k$ implies that 
\[
3Q\cap E_{k+1}\subset \bigcup_{Q'\in \mathcal S_k: Q'\sim Q} E_{Q'}.
\]Therefore, we learn that
\begin{equation}\label{sparsek1}
|Q\cap E_{k+1}| \leq |3Q\cap E_{k+1}|\leq\sum_{ Q'\in \mathcal S_k: Q'\sim Q} |E_{Q'}| \leq 2^{-\vartheta d }|Q| 
\end{equation}
by applying for each $Q'\in \mathcal S_k$ with  $ Q'\sim Q$ the  estimate of \eqref{EQest}, and observing  that the cardinality of $\{Q'
\in \mathcal{D}:Q'\sim Q\}$ is bounded by an absolute dimensional constant, and  $|Q|, |Q'|$ are comparable,  again up to an absolute dimensional constant. From the above display we obtain the second part of \eqref{sparsek} for $\ell=k+1$. Moreover, one observes that if $L\in\mathcal S_{k+1}$ with $L\cap 3Q\neq \emptyset$, then by virtue of property \eqref{sparsek1}, $L$ must be significantly shorter than $Q$ and thus contained in one of   the $3^d$ translates of the dyadic cube $Q$ whose union covers $3Q$. Namely, we have the equality
\[
\mathcal Q_{k+1} (Q)=\{L\in\mathcal S_{k+1}: L\cap 3Q\neq \emptyset\}
\]
which also entails the last equality in 
\[
 \bigcup_{L\in   \mathcal Q_{k+1} (Q):3L\cap  2Q\neq \emptyset} 9L   \subset \bigcup_{L\in \mathcal  S_{k+1}:L\cap 3Q\neq \emptyset} L = \bigcup_{L\in\mathcal Q_{k+1} (Q)} L = \mathsf{sh} \mathcal{ Q}_{k+1} (Q)
\]
as the set in the first left hand side of the last display is contained in $3Q$ and \eqref{sparsek} holds for $\ell=k+1$.
Comparing with \eqref{stopprop}, \eqref{separation}, the discussion above entails that  $\mathcal Q_{k+1} (Q)$ is a stopping collection with top $Q$ and such that  $E_Q \subset \mathsf{sh} \mathcal{ Q}_{k+1} (Q)$, so that
\[
\sup_{x \not \in  \mathsf{sh} \mathcal{ Q}_{k+1} (Q) } |f_j\cic{1}_{3Q}(x)| \leq 2^{\frac{\Theta d}{4}} \l f_j\r_{{p}_j,3Q}. 
\]
 Furthermore, for $j=1,2$
\[
\sup_{L \in \mathcal Q_{k+1}(Q) } \inf_{\widehat L}\,  \mathrm{M}_{{p}_j} (f_j\cic{1}_{3Q})  \leq 2^{\frac{\Theta d}{4}} \l f_j\r_{{p}_j,3Q},
\]
otherwise the $9$-fold dilate of the dyadic parent of some $ L \in \mathcal Q_{k+1}(Q)  $ would be contained in $E_Q$ and thus in $E_{k+1}$, contradicting the maximality of such $L$. Therefore 
\[
\|f_j\cic{1}_{3Q}\|_{\mathcal Y_{{p}_j}(\mathcal Q_{k+1}(Q))} \leq 2^{\frac{\Theta d}{4}} \l f_j\r_{{p}_j,3Q}, \qquad j=1,2,
\]
and we may  apply \eqref{iterativeSC} to each $Q\in \mathcal S_k$ summand in \eqref{iterative2SC1}, with $h_1=f_1 $, $h_2=f_2 $ and obtain 
\[\begin{split}
  \big|\Lambda^{s_{Q}}(f_1\cic{1}_{Q} ,f_2\cic{1}_{3Q})\big| &\leq 2^{\Theta d} \mathbf{C} \ |Q| \langle f_1 \rangle_{{p}_1,3Q} \langle f_2 \rangle_{{p}_2,3Q} +
    \sum_{L \in {\mathcal Q}_{k+1}(Q):L\subset Q} \left|\Lambda^{s_{L}} (f_1 \cic{1}_L,f_2\cic{1}_{3L}) \right| \\   &=2^{\Theta d} \mathbf{C} \ |Q| \langle f_1 \rangle_{{p}_1,3Q} \langle f_2 \rangle_{{p}_2,3Q} +
    \sum_{L \in {\mathcal S}_{k+1}: L\subset Q} \left|\Lambda^{s_{L}} (f_1\cic{1}_L,f_2\cic{1}_{3L}) \right|. \end{split}
\]
As $Q\in \mathcal S_k$ are pairwise disjoint, see \eqref{disjk}, summing   over $Q\in \mathcal S_k$, writing $\mathcal T_{k}=\mathcal S_0\cup\cdots \cup \mathcal S_{k}$ and combining the resulting estimate with \eqref{iterative2SC1}, we arrive at \[
   \big|\Lambda(f_1 ,f_2)\big| \leq 2^{\Theta d} \mathbf{C} \sum_{Q\in \mathcal   T_{k}}|Q| \langle f_1 \rangle_{{p}_1,3Q} \langle f_2 \rangle_{{p}_2,3Q} +
  \sum_{L \in {\mathcal S}_{k+1}} \left| \Lambda^{s_{L}} (f_1\cic{1}_L,f_2\cic{1}_{3L}) \right| \]
that is, \eqref{iterative2SC1} with $k$ replaced by $k+1$. This, together with the previously obtained \eqref{disjk}, \eqref{sparsek} and \eqref{neighbork} for $\ell=k+1$, completes the current iteration. 
\vskip1mm \noindent  \textsc{Termination}: a consequence of our construction is  that $\sigma_k:=\max\{s_Q: Q \in \mathcal S_k \}\leq s_{Q_0}-\vartheta k.$ The   algorithm terminates when $k=K$, where $K$ is such that $\sigma_K$ is strictly less than the minimal nonzero scale in the kernel.  For $k=K$ in \eqref{iterative2SC1} the second sum on the right hand side vanishes identically and we have obtained the   estimate \eqref{sparse-est} by setting $\mathcal T:=\mathcal T_{K-1}$ and $\mathcal S:=\{3Q: Q\in \mathcal T\}$.
We see that the collection $\mathcal T$, and thus the collection of the dilates $\mathcal S$, are  sparse by simply observing that the sets
\[
F_Q:= Q\setminus E_{k+1}, \qquad Q \in \mathcal S_{k} \]  
are pairwise disjoint for $Q\in \mathcal T$  and have   measure larger than $(1-2^{-d\vartheta})|Q|$, as  can be seen from \eqref{sparsek}.

\section{Localized estimates for Dini and H\"ormander-type kernels} \label{secDH}   
In the first part of this section, we state and prove a family of localized estimates, of the type occurring in condition \eqref{lest} of Theorem \ref{theoremABS}, for kernels falling within the scope of \eqref{assker2} and possessing additional smoothness properties, of Dini or H\"ormander type. These estimates and their proof are a reformulation of the classical inequalities intervening in the proof of the weak-$L^1$ bound for Calder\'on-Zygmund operators (see, for example, \cite[Chapter 1]{Stein}). We choose to provide   details as we believe the arguments to be rather explanatory of the driving philosophy behind Theorem \ref{theoremABS}.

As we mentioned in the introduction, our abstract Theorem \ref{theoremABS}, coupled with the localized estimates that follow, can be employed to reprove the optimal  sparse domination estimates for   Cald\'eron-Zygmund kernels of Dini and H\"ormander type, thus recovering the results (among others) of \cite{BCDHL,HRT,Lac2015,Ler2015,Li1}. We provide a summary of the statements of such domination theorems in the second part of this section. 

\subsection{Localized estimates and kernel norms}
Throughout these estimates, we assume that a stopping collection $\mathcal Q$ with top $Q$ as in Section \ref{secABS}  has been fixed, and the notations   $\Lambda_{\mathcal Q,\mu,\nu}$  refer to \eqref{therealLambdaQ}. It is understood that the   constants implied by the almost inequality signs depend on dimension only and are in particular are uniform  over the choice of $\mathcal Q$.
 We begin with the  single scale localized estimate where no cancellation is exploited.\begin{lemma}[Trivial estimate]
Let  $1< \beta \leq \infty$ and $\alpha=\beta'$. Then for all $j\geq 1$ there holds \label{trivialestlemma} 
\[
\sum_{s}\int |K_s(x,y)| |b_{s-j}(y)| |h(x)|\, \d y \d x \lesssim    [K]_{0,\beta} |Q| \|b\|_{\mathcal X_1}  \|h\|_{\mathcal Y_\alpha }.
\]
\end{lemma}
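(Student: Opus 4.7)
The plan is to reindex the left hand side as a sum over $L \in \mathcal Q$. Since $b_{s-j} = \sum_{L\in\mathcal Q: s_L=s-j} b_L$ and the cubes in $\mathcal Q$ are pairwise disjoint, the double sum in $s$ and the expansion of $b_{s-j}$ collapse to a single summation over $L \in \mathcal Q$ with the scale fixed to $s=s_L+j$. So it suffices to prove the per-$L$ estimate
\[
I_L:=\int_L |b_L(y)|\int_{\R^d}|K_{s_L+j}(x,y)||h(x)|\,\d x\,\d y\;\lesssim\;[K]_{0,\beta}\,|L|\,\|h\|_{\mathcal Y_\alpha}\,\|b\|_{\mathcal X_1},
\]
since then summing over the disjoint $L\subset 3Q$ yields $\sum_L|L|\leq |3Q|\lesssim |Q|$ and the claim follows.

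The inner integral in $x$ is handled by H\"older with exponent $\beta$, together with the kernel bound built into \eqref{assker2}. The support condition confines $x$ to the shell $y+A_{s_L+j}$, and
\[
\int_{\R^d}|K_{s_L+j}(x,y)||h(x)|\,\d x \;\leq\; \|K_{s_L+j}(\cdot,y)\|_\beta\,\|h\cic{1}_{y+A_{s_L+j}}\|_\alpha \;\leq\; [K]_{0,\beta}\,2^{-(s_L+j)d/\alpha}\,\|h\cic{1}_{y+A_{s_L+j}}\|_\alpha.
\]
To bound the last factor I will produce a single cube $P$ concentric with $L$ and of sidelength comparable to $2^{s_L+j}$ that simultaneously contains $y+A_{s_L+j}$ and some distinguished point $x_L^\star\in\widehat L$ realizing (up to $\eps$) the infimum $\inf_{\widehat L}\mathrm{M}_\alpha h\leq\|h\|_{\mathcal Y_\alpha}$. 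Concentricity with $L$ works because both $y$ and $x_L^\star$ are within $O(2^{s_L})\leq O(2^{s_L+j})$ of $c_L$. Then $\|h\cic{1}_{y+A_{s_L+j}}\|_\alpha\leq|P|^{1/\alpha}\langle h\rangle_{\alpha,P}\lesssim 2^{(s_L+j)d/\alpha}\|h\|_{\mathcal Y_\alpha}$, and the two powers of $2^{(s_L+j)d/\alpha}$ cancel exactly.

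For the remaining integral of $|b_L|$ over $L$, I will use that $b\in\mathcal X_1(\mathcal Q)$ vanishes outside $\sh\mathcal Q$, so $\|b\|_{\mathcal X_1}$ reduces to $\sup_{L}\inf_{\widehat L}\mathrm{M}_1 b$. Choosing $\tilde x_L\in\widehat L$ with $\mathrm{M}_1 b(\tilde x_L)\lesssim\|b\|_{\mathcal X_1}$ and testing with the cube $\widehat L\ni\tilde x_L$ itself yields $\|b_L\|_1\leq\int_{\widehat L}|b|\lesssim|\widehat L|\|b\|_{\mathcal X_1}\lesssim|L|\|b\|_{\mathcal X_1}$, with a constant depending only on the dilation factor $2^5$ built into $\widehat L$. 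Combining with the previous bound gives the desired estimate for $I_L$.

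The only mildly delicate point is the localization step for $h$: because the $\mathcal Y_\alpha$ norm controls only the \emph{infimum} of $\mathrm{M}_\alpha h$ on $\widehat L$, one cannot test at an arbitrary point of $L$, and the averaging cube $P$ must be chosen to contain a specific good point of $\widehat L$. Arranging $P$ to be concentric with $L$ and of sidelength $\sim 2^{s_L+j}$ (which is $\gtrsim 2\ell(\widehat L)$ because $j\geq 1$) automatically satisfies this constraint and the rest is bookkeeping.
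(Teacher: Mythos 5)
Your proof is correct and follows essentially the same route as the paper's: reduce to a single cube $L$ at scale $s=s_L+j$, apply H\"older in the $x$-variable against the kernel bound $\|K_{s_L+j}(\cdot,y)\|_\beta\lesssim 2^{-(s_L+j)d/\alpha}[K]_{0,\beta}$, absorb the localized $L^\alpha$ norm of $h$ into $\mathrm{M}_\alpha h$ evaluated at a point of $\widehat L$ (the paper uses the ball $B(y,2^{s+10})\supset\widehat L$ where you use a concentric cube $P$, an immaterial difference), and finish with $\|b_L\|_1\lesssim |L|\|b\|_{\mathcal X_1}$ plus disjointness of $\mathcal Q$. The only cosmetic slip is the parenthetical claim $2^{s_L+j}\gtrsim 2\ell(\widehat L)$ for $j\geq 1$, which holds only up to the absolute factor $2^{5}$ from the dilation defining $\widehat L$; since $P$ merely needs sidelength comparable to $2^{s_L+j}$, this does not affect the argument.
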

\begin{proof} 
As $\|b_L\|_{1}\lesssim |L|\|b\|_{\mathcal X_1} $ for $L \in \mathcal Q$, it suffices to prove that for each  $L\in \mathcal Q $ and $s=s_L+j$ there holds
\begin{equation}
\label{tailsest}
\int |K_{s}(x,y)| |b_{L}(y)| |h(x)|\, \d y \d x \lesssim  [K]_{0,\beta}  \|b_L\|_{1}  \|h\|_{\mathcal Y_\alpha}.
\end{equation}
In turn, it then suffices to prove that  
\[
s\geq s_L\implies \sup_{y\in L}
 \int |K_s(y+u,y)|  |h(y+u)|\, \d u \lesssim   [K]_{0,\beta}  \|h\|_{\mathcal Y_\alpha }  
\]
which readily follows from
\[
\begin{split}
 \int |K_s(y+u,y)|  |h(y+u)|\, \d u & \leq \|K_s(y+\cdot,y)\|_{\beta} \left(\int_{B(y,2^{s+10})} |h(z)|^\alpha\, \d z\right)^{\frac{1}{\alpha}}  \\ & \lesssim   [K]_{0,\beta}   \left( \inf_{  \widehat L} \mathrm{M}_\alpha h \right) \leq [K]_{0,\beta}  \|h\|_{\mathcal Y_\alpha } 
\end{split} 
 \] when $y\in L$.
Above, we used the support condition \eqref{assker2} and H\"older's inequality for the first step, and subsequently that the ball $B(y,2^{s+10})=\{z\in \R^d:|z-y|<2^{s+10}\}$ contains the dilate $\widehat L$. The proof is complete.
\end{proof}
We  introduce a further family of   kernel norms in addition to the one of \eqref{assker2}, to which we refer for notation.  For $1< \beta\leq \infty$ set 
\begin{equation}
\label{NORMS}
[K]_{1,\beta}:=\sum_{j=1}^\infty   \varpi_{j,\beta}(K)
\end{equation}
where 
\[
\varpi_{j,\beta}(K) := \sup_{s\in \mathbb Z} 2^{\frac{sd}{\beta'}}\sup_{x\in \R^d  }\sup_{\substack{h\in \R^d\\ \|h\|_\infty< 2^{s-j-1}}}\left(\begin{array}{ll}
  & \|K_s(x,x+\cdot)- K_s(x+h,x +\cdot)\|_{\beta} \\  +&   \|K_s(x+\cdot,x)-K_s(x+\cdot,x+h)\|_{\beta}\end{array} \right).
\]
The second localized estimate we consider uses the finiteness of $[K]_{1,\beta}$ to incorporate the constant-mean zero cancellation effect.
\begin{lemma}[Cancellation estimate] \label{bbestimate1} Let  $1< \beta \leq \infty$ and $\alpha=\beta'$. Then for all $\mu,\nu \in \mathbb Z$ there holds  
\begin{equation} \label{localestlemma}
\left| \Lambda_{\mathcal{Q},\mu,\nu}(b,h)\right| + \left|  \Lambda_{\mathcal{Q},\mu,\nu}(h,b)\right|\lesssim \left([K]_{0,\infty} +[K]_{1,\beta} \right) |Q| \|b\|_{\dot{\mathcal X}_1}  \|h\|_{\mathcal Y_\alpha }.
\end{equation}
\end{lemma}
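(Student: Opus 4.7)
The plan is to prove both summands on the left-hand side of \eqref{localestlemma} in parallel, following the blueprint outlined at the end of Remark~2.6: the representations \eqref{Kok} and \eqref{Kokadj} reduce each form to a sum of integrals against $K_s(\cdot,\cdot)$ with the ``bad'' argument $b_{s-j}$ at an $L$-scale strictly smaller than the kernel scale ($j\geq 1$), and this separation of scales is exactly what allows the constant-mean-zero cancellation to be activated.

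For the first estimate, I would start from the identity
\[
\Lambda_{\mathcal{Q},\mu,\nu}(b,h)=\sum_{j\geq 1}\sum_{\substack{L\in\mathcal Q\\ s_L\leq \min\{s_Q,\nu\}-j}}\int K_{s_L+j}(x,y)\,b_L(y)\,h(x)\,\d y\,\d x
\]
supplied by \eqref{Kok}. Using $\int_L b_L=0$, I replace $K_s(x,y)$ by $K_s(x,y)-K_s(x,c_L)$ inside each summand. For each fixed $y=c_L+v\in L$ (so $\|v\|_\infty\leq 2^{s_L-1}=2^{s-j-1}$) I apply H\"older's inequality in $x$: the support condition in \eqref{assker2} confines the $x$-integration to $B(c_L,2^{s+1})$, which lies in a fixed $O(1)$-dilate of $\widehat L$, so that $\|h\|_{L^\alpha}$ on this ball is at most $C\,2^{sd/\alpha}\|h\|_{\mathcal Y_\alpha}$. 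The kernel factor is controlled by the definition of $\varpi_{j,\beta}$ via
\[
\|K_s(x,x+\cdot)-K_s(x+v,x+\cdot)\|_\beta\leq 2^{-sd/\beta'}\,\varpi_{j,\beta}(K),
\]
and since $\alpha=\beta'$ the two scale factors cancel. This produces the pointwise-in-$y$ bound $\lesssim \varpi_{j,\beta}(K)\|h\|_{\mathcal Y_\alpha}$. Integrating in $y$ yields $\|b_L\|_1\,\varpi_{j,\beta}(K)\|h\|_{\mathcal Y_\alpha}$, and summing over $L$ (using $\sum_L\|b_L\|_1\leq \sum_L|L|\,\|b\|_{\dot{\mathcal X}_1}\lesssim |Q|\,\|b\|_{\dot{\mathcal X}_1}$) and then over $j\geq 1$ produces the desired bound with kernel constant $[K]_{1,\beta}$.

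For the second estimate, I use \eqref{Kokadj} to split $\Lambda_{\mathcal Q,\mu,\nu}(h,b)$ into a main term analogous to the above and an error term $V_{\mathcal Q}(h,b)$. The error is handled by \eqref{GNI} applied with the exponent choice $q=\infty$, which gives
\[
|V_{\mathcal Q}(h,b)|\lesssim [K]_{0,\infty}\,|Q|\,\|h\|_{\mathcal Y_1}\|b\|_{\dot{\mathcal X}_1}\leq [K]_{0,\infty}\,|Q|\,\|h\|_{\mathcal Y_\alpha}\|b\|_{\dot{\mathcal X}_1}
\]
by H\"older's inequality. The main term is dual to the one already treated: after invoking $\int_L b_L=0$, the variation is now in the \emph{first} argument of $K_s$, and this is exactly what the second summand in the definition of $\varpi_{j,\beta}(K)$ quantifies. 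The rest of the computation is identical, yielding the same $[K]_{1,\beta}|Q|\|b\|_{\dot{\mathcal X}_1}\|h\|_{\mathcal Y_\alpha}$ bound.

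The main bookkeeping obstacle I anticipate is verifying that the displacement $v=y-c_L$ actually falls within the range $\|v\|_\infty<2^{s-j-1}$ required by the definition of $\varpi_{j,\beta}$, and that the $L^\alpha$-tail of $h$ over the $x$-ball containing the support of the (differenced) kernel is genuinely controlled by $\inf_{\widehat L}\mathrm M_\alpha h$ rather than by something larger; both are resolved by the fact that $\widehat L$ is the $2^5$-dilate of $L$, large enough to ``see'' a ball of radius $\sim 2^s$, but small enough that $v$ has $\ell^\infty$-diameter $2^{s_L-1}$. The only other delicate point is the factor-counting in the Hölder step, where it is crucial that $\alpha=\beta'$ so that the scale weights $2^{sd/\alpha}$ and $2^{-sd/\beta'}$ cancel exactly; this is the only place where the duality between $\alpha$ and $\beta$ is used.
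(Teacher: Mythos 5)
Your argument is essentially the paper's own proof: reduce via the representations \eqref{Kok} and \eqref{Kokadj} (with the remainder $V_{\mathcal Q}(h,b)$ handled by \eqref{GNI} at $q=\infty$, producing the $[K]_{0,\infty}$ term), exploit $\int_L b_L=0$ to difference the kernel against its value at $c_L$, apply H\"older to pair $\varpi_{j,\beta}(K)$ with $\inf_{\widehat L}\mathrm{M}_\alpha h$ using $\alpha=\beta'$, and then sum over $L\in\mathcal Q$ by disjointness and over $j\geq 1$ to obtain $[K]_{1,\beta}$. The only slips are cosmetic and harmless: for $\Lambda_{\mathcal Q,\mu,\nu}(b,h)$ the relevant modulus is the second summand in the definition of $\varpi_{j,\beta}$ (difference in the second kernel variable, $L^\beta$ in the first) and the first summand for the transposed/adjoint term, not the other way around, and the geometric fact needed is that $\widehat L\subset B(y,2^{s+10})$ (so the average of $|h|^\alpha$ over the ball is controlled by $\inf_{\widehat L}\mathrm{M}_\alpha h$), rather than the ball lying inside a dilate of $\widehat L$; neither point affects the estimate.
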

\begin{proof} 
It will suffice to prove the estimate 
\begin{equation} \label{localestlemma2}
  \sum_{L\in \mathcal Q}   \sum_{j=1 }^\infty \left|\int  K_{s_L+j}(x,y) \widetilde{b}_{L}(y) \widetilde{h}(x)    \, \d y \, \d x\right|\lesssim  [K]_{1,\beta}  |Q| \|\widetilde b\|_{\dot{\mathcal X}_1}  \|\widetilde h\|_{\mathcal Y_\alpha }.\end{equation}
In fact, by using the representations in \eqref{Kok}, \eqref{Kokadj} we see that  for all $\mu,\nu\in \mathbb Z$ and each pair $b\in \dot{ \mathcal X}_1, h\in \mathcal Y_\alpha$, the forms     $| \Lambda_{\mathcal{Q},\mu,\nu}(b,h)|$,  $ |\Lambda_{\mathcal{Q},\mu,\nu}(h,b)|$ are both bounded above  by the left hand side of \eqref{localestlemma2}
for suitable $\widetilde{b}\in\dot{\mathcal{X}}_1,\widetilde{h}\in\mathcal{Y}_\alpha$ whose norms are dominated by $\|b\|_{\dot{\mathcal X}_1}, \|h\|_{\mathcal Y_\alpha }$ respectively, up to possibly replacing  $K_s$ with its transpose and controlling the remainder  term  $V_{ \mathcal Q }(h, b)$ in the case of $\Lambda_{\mathcal{Q},\mu,\nu}(h,b)$. This remainder is estimated in  \eqref{GNI} for $q=\infty$, which  is acceptable for the  right hand side of  \eqref{localestlemma}.

We will obtain estimate \eqref{localestlemma2} from the bound
\begin{equation} \label{localestlemma3}
\sum_{j=1 }^\infty \left|\int  K_{s_L+j}(x,y) \widetilde{b}_{L}(y) \widetilde{h}(x)    \, \d y \, \d x\right| \lesssim  [K]_{1,\beta}  |L| \|\widetilde b\|_{\dot{\mathcal X}_1}  \|\widetilde h\|_{\mathcal Y_\alpha },\qquad L\in \mathcal Q
\end{equation}
 by summing over $L\in \mathcal Q$ in and using their disjointness \eqref{stopprop}.
Fix  $L\in \mathcal Q$ and $j\geq 1$.  Using the cancellation of $\widetilde{b}_{L}$ and then arguing as in the proof of \eqref{tailsest} above we obtain
\[\begin{split}  
\left|\int  K_{s_L+j}(x,y) \widetilde{b}_{L}(y) \widetilde{h}(x)    \, \d y \, \d x\right| 
&\leq \|\widetilde{b}_L\|_{1} \sup_{y\in L} \int |K_{s_L+j}(y+u,y)-K_{s_L+j}(y+u,c_L)|  |\widetilde{h}(y+u)|\, \d u  \\&\lesssim \|\widetilde{b}_L\|_{1} \omega_{j,\beta}(K) \left( \inf_{\widehat{L}} \mathrm{M}_\alpha\widetilde{h} \right) \lesssim \omega_{j,\beta}(K) |L|\|\widetilde{b} \|_{\dot{\mathcal X}_1}\|\widetilde{h}\|_{\mathcal Y_\alpha}   \end{split}
\] 
and \eqref{localestlemma3}   follows by summing over  $j\geq 1$.\end{proof}

\subsection{Sparse domination of Calder\'on-Zygmund kernels}
\label{ssDH} We briefly mention how our abstract Theorem \ref{theoremABS} can be employed to recover sparse domination, and thus weighted bounds, for Calder\'on-Zygmund kernels with minimal smoothness assumptions.
Let  $T$ be an $L^2(\R^d)$ bounded operator whose   kernel $K$   satisfies the usual size normalization
\[
\sup_{x\neq y} |x-y|^d|K(x,y)| \leq 1.
\]
Let $\psi$ be a fixed Schwartz function supported in $A_1=\{x\in \R^d:2^{-2}<|x|<1\}$ and such that
\[
\sum_{s\in \mathbb Z} \psi(2^{-s} x) =1, \qquad x\neq 0.
\] 
It is immediate to see that  \eqref{assker2} holds, and in particular $[K]_{0,\infty}\leq C$, for the decomposition
\[
K_s(x,y):= K(x,y)\psi\left (\textstyle\frac{x-y}{2^s}\right), \qquad s \in \mathbb Z.
\]
We further assume that $[K]_{1,\beta}<\infty$ for some $1<\beta\leq \infty$, where the kernel norm has been defined in \eqref{NORMS}.
When $\beta=\infty$, this is exactly the Dini condition \cite{HRT,Lac2015,Ler2015}. For $\beta<\infty$, the above condition is equivalent to the assumptions of \cite{Li1}, where in fact a multilinear version is presented. 

The assumptions of Theorem \ref{theoremABS} then hold for the dual form \[\Lambda(f_1,f_2)=\l Tf_1,\overline{f_2}\r.\] We have already observed that \eqref{assker2}  is verified with $q=\infty$. It is well-known that $L^2$-bounded\-ness of $\Lambda$ together with $[K]_{1,\beta}<\infty$ yields that  the truncation forms $\Lambda^\nu_\mu$ (cf. \eqref{ltf})  are uniformly bounded on $L^t(\R^d)\times L^{t'}(\R^d)$ \cite[Ch.\ I.7]{Stein} for all $1<t<\infty$, thus we have condition \eqref{asstrunc} with, for instance,  $r=2$.  Furthermore, Lemma \ref{bbestimate1}  is exactly \eqref{lest} for the corresponding $\Lambda_{\mathcal Q,\mu,\nu }$, with $p_1=1,p_2=\alpha=\beta'$. Applying Theorem \ref{theoremABS} in the form given in Remark \ref{remABS}, we obtain the following sparse domination result, which recovers (the dual form of) the domination theorems from the above mentioned references. We send to the same references for the sharp weighted norm inequalities that descend from this result.
\begin{theorem}[Calder\'on-Zygmund theory] Let $T$ be as above and   $1\leq \beta <\infty$. For all $1<t<\infty$ and   all pairs  $f_1\in L^{t} (\R^d), f_2\in L^{t'} (\R^d)$ there holds
 \[
 |\l T f_1, f_2 \r| \leq C_\beta [K]_{1,\beta}\sup_{\mathcal S} \mathsf{PSF}_{\mathcal S;1,\beta'}(f_1, f_2).
 \]
where $C_\beta$ is a positive constant depending on $\beta$ and on the dimension $d$ only.
\end{theorem}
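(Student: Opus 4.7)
The plan is to invoke the abstract principle of Theorem \ref{theoremABS} applied to the bilinear form $\Lambda(f_1,f_2) = \langle Tf_1, \overline{f_2}\rangle$, with the choices $q=\infty$ in \eqref{assker2}, $r=2$ in \eqref{asstrunc}, and $p_1=1$, $p_2=\beta'$ in \eqref{lest}. The three structural conditions need to be verified in turn, after which the domination estimate with sparse form $\mathsf{PSF}_{\mathcal S;1,\beta'}$ drops out directly from \eqref{thABSest} via the extension argument of Remark \ref{remABS}.

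First, I would fix the dyadic decomposition $K_s(x,y) = K(x,y)\psi(2^{-s}(x-y))$ outlined in the excerpt. The support of each $K_s$ lies in $\{x-y\in A_s\}$ by construction of $\psi$, and the size bound on $K$ immediately gives $\|K_s(x,x+\cdot)\|_\infty, \|K_s(x+\cdot,x)\|_\infty \lesssim 2^{-sd}$, so \eqref{assker2} holds with $q=\infty$ and $[K]_{0,\infty}\leq C$. For the truncation condition \eqref{asstrunc}, I would use standard Calder\'on-Zygmund theory: the smoothness hypothesis $[K]_{1,\beta}<\infty$ (which strictly strengthens the classical $L^\beta$-H\"ormander condition) together with $L^2$-boundedness of $T$ implies, through the classical Cotlar-type argument found e.g.\ in \cite[Ch.~I.7]{Stein}, the uniform $L^2\times L^2$ boundedness of every truncation $\Lambda_\mu^\nu$, yielding $C_{\mathsf{T}}(2)\lesssim 1 + [K]_{1,\beta}$.

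The cancellation condition \eqref{lest} is the heart of the matter, and it is precisely delivered by Lemma \ref{bbestimate1}. Indeed, that lemma gives
\[
|\Lambda_{\mathcal Q,\mu,\nu}(b,h)| + |\Lambda_{\mathcal Q,\mu,\nu}(h,b)| \lesssim \bigl([K]_{0,\infty}+[K]_{1,\beta}\bigr)|Q|\,\|b\|_{\dot{\mathcal X}_1}\,\|h\|_{\mathcal Y_{\beta'}}
\]
which is \emph{stronger} than what \eqref{lest} requests: the second half of \eqref{lest} asks only for $\|h\|_{\mathcal Y_\infty}$ on the right-hand side, and trivially $\|h\|_{\mathcal Y_{\beta'}}\lesssim \|h\|_{\mathcal Y_\infty}$ on the support of the interaction. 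Thus both halves of \eqref{lest} hold with $p_1=1$, $p_2=\beta'$ and $C_{\mathsf L}\lesssim [K]_{1,\beta}$ (absorbing the harmless $[K]_{0,\infty}\leq C$ into this quantity).

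With all three hypotheses verified, Theorem \ref{theoremABS} in the form \eqref{thABSest} produces the sparse bound for the truncations $\Lambda_\mu^\nu$ with a constant $\lesssim [K]_{1,\beta}$. The final step is the limiting argument of Remark \ref{remABS}: since $T$ is a bona fide Calder\'on-Zygmund operator, it extends boundedly to $L^t\times L^{t'}$ for all $1<t<\infty$, and the dominated convergence upgrade described there transfers the estimate from compactly supported $L^\infty$ functions to arbitrary $(f_1,f_2)\in L^t\times L^{t'}$. The only mildly technical step is the verification of \eqref{asstrunc}, and this is entirely classical; no new ideas are needed beyond the machinery already established in Section~\ref{secABS} and Lemma~\ref{bbestimate1}.
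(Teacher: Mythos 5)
Your proposal is correct and follows essentially the same route as the paper: verify \eqref{assker2} with $q=\infty$ from the size normalization, obtain \eqref{asstrunc} with $r=2$ from classical Calder\'on--Zygmund theory, read off \eqref{lest} with $p_1=1$, $p_2=\beta'$ from Lemma \ref{bbestimate1}, and conclude via Theorem \ref{theoremABS} and the limiting argument of Remark \ref{remABS}. The only cosmetic difference is your extra remark that the lemma is ``stronger'' than the second line of \eqref{lest}; the paper simply notes the lemma is exactly \eqref{lest}, and in either phrasing the monotonicity of the $\mathcal Y_p$ and $\mathcal X_p$ norms makes the deduction immediate.
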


 \section{Proof of Theorem \ref{theoremRH}} \label{secRH}   Let $1<q\leq \infty$ and suppose that $\Omega\in L^q(S^{d-1})$ has unit norm and  vanishing integral.   Write throughout $x'=x/|x|$.  We decompose for $x\neq 0$ the kernel of $T_\Omega$ in \eqref{tomega} as
\[
\frac{\Omega(x')}{|x|^d} =  \sum_{s} K_{s}(x), \qquad K_{s}(x)=\Omega(x') 2^{-sd}\phi(2^{-s} x)
\]
where $\phi$ is a suitable smooth radial function supported in $A_1=\{2^{-2} \leq |x|\leq 1\}$. 
The main result of this subsection is the following proposition: again, we assume that a stopping collection  $\mathcal Q$ with top the dyadic cube  $Q$ as in Section \ref{secABS} has been fixed and the notations $\mathcal Y_t$ and similar refer to that fixed setting.
\begin{proposition} \label{propseeger} Let $\Omega\in L^q(S^{d-1})$  of unit norm and vanishing integral.
Let $\{\eps_s\}\in \{-1,0,1\}^{\mathbb Z}$ be a choice of signs, $b \in \dot{\mathcal X}_1$ and define
\[
 \mathsf{K}(b,h):=\sum_{j\geq 1} \sum_{s} \eps_{s}\left\langle   K_{s}*  b_{s-j},\overline h\right\rangle
\]
where
\[
b_{s} = \sum_{\substack{L \in \mathcal Q\\ s_{L}=s}} b_{L}.
\]
 There exists an absolute  constant $C$, in particular uniform over all $\{\eps_s\}\in \{-1,0,1\}^{\mathbb Z}$ such that  
 \begin{equation}
\label{seegest}
\left| \mathsf{K}(b,h)\right| \leq   \frac{Cp}{p-1}  |Q| \|b\|_{\dot{\mathcal X_{1}} }  \|h\|_{\mathcal Y_{p} }\begin{cases} \|\Omega\|_{L^{q,1}\log L(S^{d-1})} & q<\infty, \;p\geq q' \\ \|\Omega\|_{L^\infty(S^{d-1})} & q=\infty, \; p>1. \end{cases}
\end{equation}
\end{proposition}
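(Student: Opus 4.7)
The plan is to adapt Seeger's microlocal decomposition of $K_s$ to the bilinear-form/stopping-collection setting distilled in Lemma \ref{bbestimate1}. I would fix $j\geq 1$ and aim to bound the single-$j$ partial sum
\[
\mathsf{K}_j(b,h) := \sum_s \eps_{s}\left\langle K_{s}*b_{s-j},\overline h\right\rangle
\]
by a quantity of the form $\omega_j(p)\,|Q|\,\|b\|_{\dot{\mathcal X}_{1}}\|h\|_{\mathcal Y_{p}}\,\|\Omega\|_\star$ where the weights $\{\omega_j(p)\}$ are summable in $j$ with total $\lesssim p/(p-1)$. That $p/(p-1)$ blowup will arise by interpolating a trivial bound with mild growth in $j$ against a quadratic bound with geometric decay in $j$, optimizing the interpolation parameter as a function of $p$.

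For the microlocal step, fix a small parameter $\alpha\in(0,1)$ to be tuned at the end. Let $\{\theta_\nu\}\subset S^{d-1}$ be a maximal $2^{-\alpha j}$-separated net and $\{\chi_\nu\}$ a smooth partition of unity subordinated to spherical caps of radius $2^{-\alpha j}$ around $\theta_\nu$. Write $\Omega=\sum_\nu\Omega\chi_\nu=:\sum_\nu\Omega_\nu$ and $K_{s,\nu}(x):=\Omega_\nu(x')2^{-sd}\phi(2^{-s}x)$. Each $K_{s,\nu}$ is essentially a ``plate'' of thickness $2^{s(1-\alpha)}$ transverse to $\theta_\nu$ and length $2^s$ along $\theta_\nu$. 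Convolving in the transverse directions with a Schwartz bump $\Phi_{s,\nu}$ adapted to this plate produces a splitting $K_{s,\nu}=G_{s,\nu}+E_{s,\nu}$ in which $G_{s,\nu}:=K_{s,\nu}*\Phi_{s,\nu}$ is Lipschitz at scale $2^{s-j}$ and $E_{s,\nu}:=K_{s,\nu}-G_{s,\nu}$ has Fourier transform concentrated away from a thin slab orthogonal to $\theta_\nu$.

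For the smooth piece, reassemble $G_s=\sum_\nu G_{s,\nu}$ and apply the cancellation mechanism of Lemma \ref{bbestimate1}: by construction $[G]_{1,\infty}\lesssim 2^{-(1-\alpha)j}\|\Omega\|_\star$, which yields a bound that loses at most polynomially in $j$ (from the cardinality of the net) but gains geometrically, hence is summable in $j$. For the oscillatory piece $E_s=\sum_\nu E_{s,\nu}$, I would deploy a $TT^*$/Cotlar--Stein almost-orthogonality argument: by Plancherel, the Fourier localization of $\widehat{E_{s,\nu}}$ produces near-orthogonality across $(s,\nu)$, giving an estimate
\[
\Big\|\sum_s \eps_s\,E_s*b_{s-j}\Big\|_2 \lesssim 2^{-\delta j}\,\|\Omega\|_\star\,\|b\|_2
\]
for some $\delta=\delta(d,\alpha,q)>0$. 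This is interpolated against the trivial $L^1$ bound (losing at most polynomially in $j$ via Lemma \ref{trivialestlemma}) to get an $L^p$-type estimate with the desired $j$-decay. The transfer to the localized form $\mathsf{K}_j(b,h)$ exploits that $\supp(E_s*b_{s-j})$ lies within a $2^s$-neighborhood of $\mathsf{sh}\,\mathcal Q$, where the definition of $\mathcal Y_p(\mathcal Q)$ gives $L^p$-control of $h$ through the local maximal function $\inf_{\widehat L}\mathrm{M}_p h$.

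The main obstacle will be step three: executing the $L^2$ orthogonality while simultaneously transferring it to the local $(1,p)$-type bilinear setting without losing the $j$-decay. Concretely, the interpolation must be performed at the level of the stopping-collection norms, and this demands a careful H\"older step that combines the global $L^2$ bound with an $\ell^1$-sum over $L\in\mathcal Q$ via $\|b_L\|_1\lesssim|L|\,\|b\|_{\dot{\mathcal X}_1}$, using the separation properties \eqref{separation} to prevent overlap. A further subtlety appears in the $1<q<\infty$ case: the microlocal smoothing becomes rough in $\nu$, so to extract the $L^{q,1}\log L$ norm of $\Omega$ cleanly one would stratify via the layer-cake decomposition of $|\Omega|$ and apply the previous scheme on each level set, with the $\log L$ factor absorbing the logarithmic number of levels that contribute meaningfully.
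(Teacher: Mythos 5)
Your global architecture matches the paper's: split $\Omega$ by level sets (the unbounded part goes through the trivial Lemma \ref{trivialestlemma} and sums to the $L^{q,1}\log L$ norm), then for the bounded part interpolate a lossless-in-$j$ trivial bound against a geometrically decaying bound, tuning the truncation height $2^{\delta j}$ to produce $Cp/(p-1)$. The gap is in the decaying bound, i.e.\ in your treatment of the two halves of the microlocal decomposition, where the roles of the two cancellation mechanisms are inverted and the key estimate as stated is unusable. First, your $L^2$ bound $\|\sum_s \eps_s E_s*b_{s-j}\|_2 \lesssim 2^{-\delta j}\|b\|_2$ is in terms of $\|b\|_2$, which is not controlled in this setting: $b$ is an $L^1$-normalized bad function, and $\|b\|_{\dot{\mathcal X}_1}$ only controls local averages, so $\|b\|_2$ can be arbitrarily large relative to $|Q|^{1/2}\|b\|_{\dot{\mathcal X}_1}$. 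The subsequent ``interpolation with the trivial $L^1$ bound'' would then have to be performed in the $b$ variable, which destroys the $(1,p)$ structure of the conclusion; the paper interpolates in $h$ only, between Lemmata \ref{lemmaHtrivial} and \ref{lemmaHinfty}, both of which are linear in $\|b\|_{{\mathcal X}_1}$ resp.\ $\|b\|_{\dot{\mathcal X}_1}$. Second, there is no almost-orthogonality across $\nu$ for pieces whose Fourier transforms live in the \emph{complements} of thin slabs: those complements pairwise overlap in sets of essentially full measure, so Cotlar--Stein yields nothing there, and no source of $2^{-\delta j}$ decay is identified. Your two pieces also pull the cap parameter in opposite directions: the smooth piece gains from the mean-zero of $b_{s-j}$ only if the mollification scale exceeds $2^{s-j}$ (caps wider than $2^{-j}$), while making $E_{s,\nu}$ ``high-frequency relative to $b_{s-j}$'' requires the opposite; at the balance point neither gains, which is precisely the classical obstruction in this problem.

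In the paper the assignment is the reverse of yours. The Fourier-slab-localized piece $\Gamma_s^j=\sum_\nu P_\nu^j*H_{s\nu}^j$ is the one treated by orthogonality, and not by a plain $TT^*$: Lemma \ref{Glemma} proves an $L^{t}$ bound for even integers $t=2r$ by expanding the $t$-th power, using Plancherel together with the bounded overlap of the slabs (display \eqref{Glemma2}), and then controlling the resulting multilinear integral \eqref{dnu} by the plate-measure estimate \eqref{maintrick}, which is the correct localized substitute for $\|b\|_2^2$ in that it uses only $\|\mathsf{b}_{R_s(z)}\|_1\lesssim |R_s(z)|\,\|b\|_{\mathcal X_1}$. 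This yields the $\mathcal Y_\tau$ norm of $h$ with constant $C\tau/(\tau-1)$ and decay $2^{-j(1-\kappa)/2}$, with no cancellation of $b$ required. The complementary piece $\Upsilon_s^j$ is the one handled by cancellation: after a Littlewood--Paley decomposition, Seeger's kernel estimate shows that the components at frequencies above $2^{-(s-j)}$ have exponentially small $L^1$ norm, and the remaining low-frequency components are killed by the mean zero of $b_L$ (Lemma \ref{Ulemma}); no $TT^*$ is involved. Your proposal contains no substitute for the $L^{2r}$ counting argument of Lemma \ref{Glemma}, which is the technical heart of the proof, so as written the argument does not close.
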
   With the above proposition in hand, we may now give the proof of Theorem \ref{theoremRH}. The structural assumptions \eqref{assker2}, \eqref{asstrunc} of the abstract Theorem \ref{theoremABS} applied to the above decomposition of (the dual form of) $T_\Omega$  are respectively  verified with $q=q$ and  with $r=2$ (this is the classical $L^2$-boundedness of the  truncations of $T_\Omega$ \cite{CZ,GS2}).

 We still need to ve\-ri\-fy \eqref{lest} for the values $p_1=1$ and $p_2=p$ for each $p$ in the claimed range (depending on whether $q=\infty$ or not).
It is immediate from  the representations \eqref{Kok}  that in this setting 
$
\Lambda_{\mathcal Q,\mu,\nu}(b,h) = \mathsf{K}(b\cic{1}_Q,h)
$ for a suitable choice of signs $\{\eps_s\}$ depending on $\mu,\nu$. So Proposition \ref{propseeger} yields the first condition in \eqref{lest} with $p_1=1,p_2=p$.
On the other hand, we read from  \eqref{Kokadj} that $\Lambda_{\mathcal Q,\mu,\nu}(h,b) $  is equal to $\mathsf{K}(b^{\mathsf{in}},h\cic{1}_Q)$, again for a suitable choice of signs $\{\eps_s\}$ depending on $\mu,\nu$, up to   replacing $K_s$ by ${K_s(-\cdot)}$, and up to subtracting off the remainder term from \eqref{GNI}, which is estimated in this case by an absolute constant times
\[
|Q|\|h\|_{\mathcal Y_\infty}\|b\|_{\mathcal Y_{q'}} \leq |Q|\|h\|_{\mathcal Y_\infty}\|b\|_{\mathcal Y_{p}}
\]
 which is acceptable for the right hand side of the second condition in \eqref{lest} when $p_2=p$.
These considerations and another application of Proposition \ref{propseeger} finally yield    Theorem \ref{theoremRH}, via our abstract result in the form   described in Remark \ref{remABS}.

\subsection{Proof of Proposition \ref{propseeger}} Throughout this proof, $C$ is a positive absolute dimensional constant which may vary at each occurrence without explicit mention. We assume $\{\eps_s\}\in \{-1,0,1\}^\mathbb Z$ is given. For the sake of simplicity, we redefine $K_s:=\eps_sK_s$; it will be clear from the proof below that the signs of $K_s$ play no  role.  Fix  a positive integer $j$. For $\delta>0$ to be fixed at the end of the argument define
\begin{equation}
\label{splitomega}
O_j= \big\{\theta \in S^{d-1}: |\Omega(\theta)| >2^{\delta j}\big\}, \qquad \Omega_j=\Omega \cic{1}_{S^{d-1}\setminus O_j}, \qquad \Delta_j=\Omega \cic{1}_{  O_j}.
\end{equation}
We now decompose
\begin{equation}
\label{seegerdec1}
K_s=H_s^j +  V_s^j, \qquad H_s^j=K_{s}\cic{1}_{\supp \Omega_j}, \quad V_s^j= K_{s}\cic{1}_{ O_j}.
\end{equation}
 The first localized form we treat, namely the contribution of the unbounded part of $\Omega$, is dealt with by means of a trivial estimate.\begin{lemma} \label{lemmaV}
$ \displaystyle
\mathsf{V}^j(b,h):=\sum_{s} |\langle  V_{s}^j* b_{s-j},\overline h  \rangle| \leq C   \|\Delta_j\|_{q} |Q| \|b\|_{\mathcal X_1}\|h\|_{\mathcal Y_p},\qquad p\geq q'$. 
\end{lemma}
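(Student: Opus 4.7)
The plan is to deduce this estimate as a direct application of the trivial estimate Lemma \ref{trivialestlemma} to the convolution kernel $K_s(x,y) = V_s^j(x-y)$, after computing the relevant kernel norm. The point is that for this ``bad'' angular part $\Delta_j$, no cancellation needs to be exploited---the $L^q$ control on $\Delta_j$ is enough, and the restriction $p \geq q'$ simply reflects the natural duality in the trivial estimate.

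First I would verify the support condition of \eqref{assker2}: since $V_s^j(z) = 2^{-sd}\phi(2^{-s}z)\,\Delta_j(z/|z|)$ and $\phi$ is supported in $A_1$, the kernel $V_s^j(x-y)$ is supported where $x-y \in A_s$. Next, I would compute $[V^j]_{0,q}$ explicitly. Passing to polar coordinates and substituting $z = 2^s w$ yields
\[
\|V_s^j\|_{q}^q \;=\; 2^{-sd(q-1)} \int_0^\infty |\phi(r)|^q r^{d-1}\,\d r \cdot \|\Delta_j\|_{L^q(S^{d-1})}^q \;\lesssim\; 2^{-sd(q-1)} \|\Delta_j\|_q^q.
\]
Since $K_s(x, x+u) = V_s^j(-u)$ and $K_s(x+u,x) = V_s^j(u)$ both share the $L^q$-norm of $V_s^j$, this gives $[V^j]_{0,q} \lesssim \|\Delta_j\|_q$.

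Then I would apply Lemma \ref{trivialestlemma} with $\beta = q$ and $\alpha = q'$. Bringing absolute values inside each inner product and observing that the sum-integral expression $\sum_s \int |V_s^j(x-y)|\,|b_{s-j}(y)|\,|h(x)|\,\d y\,\d x$ majorizes $\mathsf{V}^j(b,h)$, the lemma immediately yields
\[
\mathsf{V}^j(b,h) \;\lesssim\; \|\Delta_j\|_q\, |Q|\, \|b\|_{\mathcal X_1}\, \|h\|_{\mathcal Y_{q'}}.
\]
Finally, to get the bound with $\|h\|_{\mathcal Y_p}$ for $p \geq q'$, I invoke Hölder's inequality pointwise: $\mathrm{M}_{q'} h \leq \mathrm{M}_p h$, and inspection of the definition of $\mathcal Y_p$ then gives the monotonicity $\|h\|_{\mathcal Y_{q'}} \leq \|h\|_{\mathcal Y_p}$. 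There is no real obstacle here---the argument is genuinely a scale computation plus an appeal to the earlier framework---and the difficulty of the proposition is entirely concentrated in the companion estimate for the bounded part $H_s^j$, where the constant-mean zero cancellation encoded by Seeger's microlocal decomposition must be brought to bear.
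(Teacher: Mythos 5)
Your argument is correct and is exactly the paper's proof: the authors likewise reduce to the case $p=q'$ via the monotonicity of the $\mathcal Y_p$ norms and then invoke Lemma \ref{trivialestlemma} with $\beta=q$ after noting that $[\{V_s^j\}]_{0,q}\lesssim \|\Delta_j\|_q$. Your polar-coordinate verification of this kernel norm is the only detail the paper leaves implicit, and it is carried out correctly.
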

\begin{proof} It suffices of course to prove the estimate above with $q'$ in place of $p$. This is actually a particular case of Lemma \ref{trivialestlemma} applied with $K=\{V_s^j\}$ and $\beta=q$, as it is immediate to see that for this kernel one has $[K]_{0,q}\leq C  \|\Delta_j\|_{q}. $\end{proof}
The contribution of the bounded part of $K_s$ in \eqref{seegerdec1} is more delicate, and we postpone the proof of the following lemma to the next  Subsection \ref{sspflemmaH}.
\begin{lemma} There exist   absolute constants $C,c>0$ such that for all $1<p\leq \infty$ \label{lemmaH}
\[
\mathsf{H}^j(b,h):=\left|\sum_{s} \langle  H_{s}^j* b_{s-j}, \overline h  \rangle\right| \leq C 2^{-cj\frac{p-1}{p}}  \|\Omega_j\|_{\infty} |Q| \|b\|_{\dot{\mathcal X_1}}\|h\|_{\mathcal Y_p}.\]
\end{lemma}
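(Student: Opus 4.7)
The plan is to derive Lemma \ref{lemmaH} by interpolating two endpoint estimates for the bilinear form $\mathsf{H}^j$. This matches the structure of the conclusion, since the decay factor $2^{-cj(p-1)/p}$ is exactly the convex combination of a trivial endpoint at $p=1$ (no decay in $j$) and an endpoint at $p=\infty$ with full exponential decay $2^{-cj}$. The first endpoint is immediate: from $|H_s^j(x)| \le \|\Omega_j\|_\infty 2^{-sd}|\phi(2^{-s}x)|$ we read off $[K]_{0,\infty} \le C\|\Omega_j\|_\infty$ for the family $\{H_s^j\}$, so that Lemma \ref{trivialestlemma} with $\beta=\infty$ yields
\[
|\mathsf{H}^j(b,h)| \le C \|\Omega_j\|_\infty |Q| \|b\|_{\dot{\mathcal X}_1} \|h\|_{\mathcal Y_1}
\]
without exploiting any cancellation.

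The substantive endpoint to prove is
\[
|\mathsf{H}^j(b,h)| \le C 2^{-cj} \|\Omega_j\|_\infty |Q| \|b\|_{\dot{\mathcal X}_1} \|h\|_{\mathcal Y_\infty},
\]
which is the localized analogue of the $L^2$-decay estimate at the heart of Seeger's proof of the weak $(1,1)$ bound for $T_\Omega$. I would split $\Omega_j = \sum_\nu \Omega_{j,\nu}$ via a smooth partition of unity on $S^{d-1}$ subordinated to spherical caps of aperture $\sim 2^{-\alpha j}$ (for a small $\alpha > 0$ chosen at the end), inducing $H_s^j = \sum_\nu H_{s,\nu}^j$; then smooth each $H_{s,\nu}^j$ by convolution with a plate-shaped bump $\rho_{s,\nu}$ adapted to the tube of length $2^s$ and thickness $2^{s(1-\alpha)}$ pointing in the direction of the cap. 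The smoothed piece $H_{s,\nu}^j * \rho_{s,\nu}$ admits enough regularity at scale $2^{s-j}$ that the mean-zero property $\int b_L = 0$ for cubes $L\in\mathcal Q$ with $s_L=s-j$ produces the desired gain of $2^{-cj}$; the error $H_{s,\nu}^j - H_{s,\nu}^j*\rho_{s,\nu}$ is estimated directly using the $L^\infty$-bound on $\Omega_j$ together with the cap width. Summation over $\nu$ is handled by a $TT^*$-style almost-orthogonality argument exploiting the geometric separation of the tubes, and the polynomial-in-$j$ growth from the cardinality of the cap family is absorbed by the exponential gain upon choosing $\alpha$ appropriately.

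For the final interpolation, I would decompose $h = h_\lambda + (h - h_\lambda)$ at a threshold $\lambda$, with $h_\lambda := h \mathbf{1}_{\{|h| \le \lambda\}}$ bounded in $L^\infty$ by $\lambda$ and the complementary part controlled via its $\mathcal Y_p$-norm; applying the two endpoint bounds to the respective parts and optimizing in $\lambda$ (with $\lambda \sim \|h\|_{\mathcal Y_p}$) yields the claimed estimate with the exponent $(p-1)/p$ on the decay factor.

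\textbf{Main obstacle.} The hard part will be the $\mathcal Y_\infty$ endpoint with exponential gain, specifically the interplay between the microlocal/plate decomposition of the rough kernel and the multiscale spatial structure of $b \in \dot{\mathcal X}_1$. Since $\Omega_j$ is merely $L^\infty$-bounded and not Dini-continuous, the cancellation gain emerges only after averaging $H_{s,\nu}^j$ against the plate-adapted bump, and the bounds on the resulting error terms depend on $\|\Omega_j\|_\infty$ and must still decay in $j$ thanks to the narrowness of the caps. Moreover, because $b_{s-j}$ collects atoms at a common scale $2^{s-j}$ but from arbitrary dyadic locations inside $3Q$, the almost-orthogonality estimates across scales $s$ must be performed uniformly in the stopping collection $\mathcal Q$, so that the local norms $\|\cdot\|_{\dot{\mathcal X}_1}$ and $\|\cdot\|_{\mathcal Y_\infty}$ (rather than global $L^p$-norms) control the output.
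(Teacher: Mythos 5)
Your overall architecture --- a trivial $\mathcal Y_1$ bound with no decay, a $\mathcal Y_\infty$ bound with gain $2^{-cj}$, and interpolation in $h$ --- is exactly the paper's (Lemmata \ref{lemmaHtrivial} and \ref{lemmaHinfty} followed by Riesz--Thorin), and both your trivial endpoint and your truncation-based interpolation are fine. The problem is that essentially the entire content of the lemma sits in the decaying $\mathcal Y_\infty$ endpoint, and your sketch of it contains a step that would fail. You propose to use the mean-zero of $b_L$ against the smoothed pieces $H^j_{s,\nu}*\rho_{s,\nu}$ (which, for a suitable plate geometry, can indeed yield a gain there) and to estimate the errors $H^j_{s,\nu}-H^j_{s,\nu}*\rho_{s,\nu}$ ``directly using the $L^\infty$-bound on $\Omega_j$ together with the cap width.'' But the error term has $L^1$ norm comparable to that of $H^j_{s,\nu}$ itself --- convolution with a normalized bump does not shrink $L^1$ norms --- and summing the per-cap bounds over the roughly $2^{\alpha j(d-1)}$ caps merely reproduces the trivial estimate, with no decay in $j$. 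Smallness of the error piece can only come from cancellation; in the paper (following Seeger) the roles are exactly reversed: the Fourier-localized part $\Gamma^j_s=\sum_\nu P^j_\nu * H^j_{s\nu}$ is estimated \emph{without} any cancellation of $b$ (Lemma \ref{Glemma}), while the remainder $\Upsilon^j_s$ is where the mean-zero of $b_L$ is combined, via a Littlewood--Paley decomposition $B_k$, with the rapid $L^1$ decay of the pieces $R^{jk}_{s\nu}$ (Lemma \ref{Ulemma}).

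A second, related gap: your appeal to a ``$TT^*$-style almost-orthogonality argument'' to sum over $\nu$ is an $L^2$ device, whereas $b\in\dot{\mathcal X}_1$ is only an $L^1$-normalized object paired against $h\in\mathcal Y_\infty$; an $L^2$ bound on $\sum_\nu(\cdots)*b_{s-j}$ does not by itself control the pairing in terms of $\|b\|_{\dot{\mathcal X}_1}$ and $|Q|$. This is precisely the difficulty the paper resolves in Lemma \ref{Glemma} by raising $\sum_s\Gamma^j_s*b_{s-j}$ to an even power $t=2r$, counting the $r$-tuples of caps whose Fourier supports overlap (the factor $2^{rj(d-2+\kappa)}$), and then exploiting the pairwise disjointness of the stopping cubes through the key estimate \eqref{maintrick} to convert the resulting multilinear integral back into $|Q|\,\|b\|_{\mathcal X_1}^{t}$. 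Without this moment/counting mechanism (or an equivalent substitute), neither the $\nu$-summation nor the passage from $L^2$ to the localized norms closes, so the decaying endpoint --- and hence the lemma --- is not established.
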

We may now complete the proof of Proposition \ref{propseeger}. We assume $q<\infty$, the remaining case is actually simpler as $\mathsf{V}^j$ is identically zero. Our decomposition   \eqref{seegerdec1}  
yields that
\[
| \mathsf{K}(b,h)|\leq \sum_{j\geq 1} |\mathsf{H}^j(b,h)|  + \sum_{j\geq 1} |\mathsf{V}^j(b,h)|.   
\]
 Choosing  $\delta=c\frac{p-1}{2p}$ in \eqref{splitomega}  and using Lemma  \ref{lemmaH}, we estimate
\[\begin{split}
  \sum_{j\geq 1} |\mathsf{H}^j(b,h)|  & \leq C  |Q| \|b\|_{\dot{\mathcal X_1}}\|h\|_{\mathcal Y_p} \sum_{j\geq 1} 2^{-cj\frac{p-1}{p}}  \|\Omega_j\|_{\infty}
  \leq C  |Q| \|b\|_{\dot{\mathcal X_1}}\|h\|_{\mathcal Y_p} \sum_{j\geq 1} 2^{-cj\frac{p-1}{2p}}  
\\ &\leq \frac{Cp}{p-1}|Q| \|b\|_{\dot{\mathcal X_1}}\|h\|_{\mathcal Y_p}   \end{split}
\]which is smaller than  the right hand side of \eqref{seegest}.
Using Lemma \ref{lemmaV}, the latter sum involving $\mathsf{V}_j$ is then estimated by
\[  
\left(\sum_{j\geq 1} \|\Delta_j\|_q\right)|Q| \|b\|_{\mathcal X_{1} }  \|h\|_{\mathcal Y_{p} } 
 \leq \frac{Cp}{p-1} \|\Omega\|_{L^{q,1}\log L(S^{d-1})} |Q| \|b\|_{\mathcal X_{1} }  \|h\|_{\mathcal Y_{p} }
\]
which also complies with the right hand side of \eqref{seegest}; here we have used 
that 
\[
 \sum_{j\geq 1} \|\Delta_j\|_q  \leq \sum_{j\geq 1 } \sum_{k\geq j} 2^{\delta k} |O_{k}\setminus O_{k+1}|^\frac{1}{q} \leq  \sum_{k\geq 1 } k 2^{\delta k} |O_{k}\setminus O_{k+1}|^\frac{1}{q} \leq \frac{C}{\delta} \|\Omega\|_{L^{q,1}\log L(S^{d-1})}.
\]
 The proposition is thus proved up to establishing  Lemma  \ref{lemmaH}.

\subsection{Proof of Lemma \ref{lemmaH}} \label{sspflemmaH} Our first observation is actually another trivial estimate.
\begin{lemma} \label{lemmaHtrivial} There exists  $C>0$ such that
$ \displaystyle
|\mathsf{H}^j(b,h)|  \leq C  \|\Omega_j\|_{\infty} |Q| \|b\|_{\mathcal X_1}\|h\|_{\mathcal Y_1}.
$
\end{lemma}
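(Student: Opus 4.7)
The plan is to recognize this as a direct application of the trivial estimate of Lemma \ref{trivialestlemma} to the convolution kernel family $\{H^j_s\}_{s\in\Z}$, in the endpoint case $\beta=\infty$ (so $\alpha=\beta'=1$).

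First, I would compute the $[\cdot]_{0,\infty}$-norm of the family $\{H_s^j\}$. Recalling that
\[
K_s(x) = \Omega(x') 2^{-sd}\phi(2^{-s}x), \qquad H_s^j = K_s \cic{1}_{\{x'\in \supp \Omega_j\}}
\]
and that $\phi$ is a bounded function supported in $A_1$, we obtain pointwise
\[
|H_s^j(x)| \leq |\Omega_j(x')| 2^{-sd}|\phi(2^{-s}x)| \leq C \|\Omega_j\|_{\infty}\, 2^{-sd} \cic{1}_{A_s}(x).
\]
Since $H_s^j$ is a convolution kernel (viewing it as the kernel $H_s^j(x,y)=H_s^j(x-y)$ in the sense of \eqref{assker2}), this immediately yields
\[
[H^j]_{0,\infty} = \sup_{s\in\Z} 2^{sd}\sup_{x\in\R^d} \|H_s^j(x+\cdot)\|_\infty \leq C \|\Omega_j\|_\infty.
\]

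Second, I would apply Lemma \ref{trivialestlemma} with $K=\{H_s^j\}$, $\beta=\infty$, $\alpha=1$. Since $\mathsf{H}^j(b,h)$ is the absolute value of a sum, it is trivially dominated by the corresponding sum of absolute values, and so
\[
|\mathsf{H}^j(b,h)| \leq \sum_{s} \int |H_s^j(x-y)|\,|b_{s-j}(y)|\,|h(x)|\,\d y\,\d x \lesssim [H^j]_{0,\infty}\, |Q|\, \|b\|_{\mathcal{X}_1}\,\|h\|_{\mathcal{Y}_1}.
\]
Combining this with the bound on $[H^j]_{0,\infty}$ above gives the claimed estimate. There is no real obstacle here: the lemma simply packages the crude $L^\infty$ bound on the truncated kernel, trading away the oscillation and the cancellation of $b$. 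It is stated separately because it will be interpolated against the cancellative estimate of Lemma \ref{lemmaH} (handled in Subsection \ref{sspflemmaH}) to obtain the geometric decay in $j$ needed for summability.
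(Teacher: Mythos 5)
Your proof is correct and is essentially identical to the paper's: the paper also disposes of this lemma by applying Lemma \ref{trivialestlemma} with $\beta=\infty$ to the kernel family $K=\{H_s^j\}$, noting that $[K]_{0,\infty}\leq C\|\Omega_j\|_\infty$ follows at once from the pointwise bound on $\Omega$ over $\supp\Omega_j$ and the support/size of $\phi$.
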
 
\begin{proof}   This is an application of Lemma \ref{trivialestlemma} to   $K=\{H_s^j\}$ with $\beta=\infty$, as it is immediate to see that for this kernel one has $[K]_{0,\infty}\leq C \|\Omega_j\|_{\infty}. $\end{proof}
The second step is  an estimate with decay, but involving $\mathcal Y_\infty$ norms.
\begin{lemma} There exist   $ C,c>0$ such that \label{lemmaHinfty}
$ \displaystyle
|\mathsf{H}^j(b,h)|  \leq C  2^{-cj}\|\Omega_j\|_{\infty} |Q| \|b\|_{\dot{\mathcal X_1}}\|h\|_{\mathcal Y_\infty}.
$
\end{lemma}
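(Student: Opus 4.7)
The plan is to derive Lemma \ref{lemmaHinfty} from a quantitative Hilbert-space estimate in the spirit of Seeger's weak-$(1,1)$ argument for $T_\Omega$ \cite{Seeger}. Since $h \in \mathcal Y_\infty(\mathcal Q)$ is supported in $3Q$ and $|h| \leq \|h\|_{\mathcal Y_\infty}$ pointwise, the Cauchy-Schwarz inequality yields
\[
|\mathsf{H}^j(b,h)| \leq |3Q|^{1/2}\,\|h\|_{\mathcal Y_\infty}\,\Bigl\|\sum_s H_s^j * b_{s-j}\Bigr\|_{L^2(\R^d)},
\]
so Lemma \ref{lemmaHinfty} reduces at once to the scale-summed $L^2$ bound
\[
\Bigl\|\sum_s H_s^j * b_{s-j}\Bigr\|_{L^2(\R^d)}\leq C\,2^{-cj}\,\|\Omega_j\|_\infty\,|Q|^{1/2}\,\|b\|_{\dot{\mathcal X}_1}.
\]

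\textbf{Microlocal angular decomposition.} To establish the above $L^2$ estimate I would decompose $\Omega_j$ into angular pieces on $S^{d-1}$ supported on caps of aperture $2^{-\theta j}$ for a small parameter $\theta \in (0,1/2)$ to be chosen later, writing $\Omega_j = \sum_\nu \Omega_j^\nu$ and correspondingly $H_s^j = \sum_\nu H_s^{j,\nu}$, with each $H_s^{j,\nu}$ concentrated on a tube of dimensions roughly $2^s \times (2^{s-\theta j})^{d-1}$. Plancherel combined with two layers of almost orthogonality---one across the scale parameter $s$ (since $\widehat{H_s^{j,\nu}}$ is essentially radially localized at scale $2^{-s}$) and one across the angular parameter $\nu$ (since the Fourier supports in distinct directions overlap only for neighbors)---reduces the squared $L^2$ norm to a diagonal single-scale sum. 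At fixed $(s,\nu)$ one then exploits the mean-zero cancellation $\int b_L = 0$ of the pieces $b_L$ with $s_L = s-j$ against the smooth ``long'' direction of the plate $H_s^{j,\nu}$: a first-order Taylor expansion trades the spatial scale $2^{s-j}$ against the $2^{s-\theta j}$ smoothness scale of the kernel, producing a gain of the form $2^{-j(1-\theta)}$ at each piece. Optimizing $\theta$ and collecting all such gains yields decay $2^{-cj}$ with $c = c(d) > 0$ in the $L^2$ bound above.

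\textbf{Main difficulty.} The principal obstacle is that the abstract space $\dot{\mathcal X}_1(\mathcal Q)$ carries no $L^\infty$ or $L^2$ control on the pieces $b_L$, in sharp contrast with the usual Calder\'on-Zygmund bad functions on which Seeger's argument is phrased. One must therefore extract all gain strictly from the mean-zero cancellation and the $L^1$ averages encoded in $\|b\|_{\dot{\mathcal X}_1}$, avoiding any level-set truncation of $b$. Balancing the angular aperture $2^{-\theta j}$ against the tubular spreading of $H_s^{j,\nu}$ requires enough smoothness in the ``long'' direction of the plate to absorb the support size $2^{s-j}$ of each $b_L$, while keeping the $\sim 2^{(d-1)\theta j}$ caps few enough that the angular almost-orthogonality survives; this quantitative microlocal tuning, a reincarnation of the main mechanism in \cite{Seeger}, is the technical heart of the proof.
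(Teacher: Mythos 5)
Your overall flavor (Seeger-style angular microlocalization plus orthogonality plus cancellation) is the right one, but there are two genuine gaps that would sink the argument as sketched.

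First, the opening Cauchy--Schwarz reduction to a single global $L^2$ bound for $\sum_s H^j_s*b_{s-j}$ is too crude. In the paper (as in \cite{Seeger}) the kernel is split as $H^j_s=\Gamma^j_s+\Upsilon^j_s$, where $\Gamma^j_s=\sum_\nu P^j_\nu*H^j_{s\nu}$ carries an explicit Fourier-side angular localization; only the main part $\Gamma^j_s$ is handled by an $L^\tau$ estimate (Lemma \ref{Glemma}), while the remainder $\Upsilon^j_s$ is controlled purely in $L^1$, paired against $\|h\|_{\mathcal Y_\infty}$ (Lemma \ref{Ulemma}), because the only available kernel information for the remainder pieces $R^{jk}_{s\nu}$ is an $L^1$ bound. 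Your reduction commits you to an $L^2$ bound for the remainder with constant $2^{-cj}|Q|^{1/2}\|b\|_{\dot{\mathcal X}_1}$, which does not follow from the known estimates and which you never address: your sketch treats the crude angular pieces $H^{j,\nu}_s$ as if they were already Fourier-localized, so the remainder never appears in your decomposition at all.

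Second, and more fundamentally, the gain mechanism you invoke for the diagonal term does not exist. The plate $H^{j,\nu}_s$ is \emph{supported} in a tube of dimensions $2^s\times(2^{s-\theta j})^{d-1}$, but it is not \emph{smooth} at scale $2^{s-\theta j}$ in the transverse directions: inside the tube it is $\Omega_j(x')$ times a smooth radial factor, and $\Omega_j$ restricted to a cap is still merely bounded measurable. A first-order Taylor expansion against the mean zero of $b_L$ therefore gains nothing transversally (and for the same reason $\widehat{H^{j,\nu}_s}$ is not ``radially localized at scale $2^{-s}$'', so your almost-orthogonality in $s$ also fails as stated). Smoothness must be manufactured by a frequency projection ($P^j_\nu$, and the Littlewood--Paley pieces $B_k$ in \eqref{seegerest}), which is precisely what creates the remainder term. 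Moreover, in the paper the $2^{-j(1-\kappa)/2}$ gain for the main part uses \emph{no} cancellation of $b$ whatsoever: it comes from counting the overlap of the conical Fourier supports of the $P^j_\nu*H^j_{s\nu}$ together with the geometric estimate \eqref{maintrick} on the $L^1$ mass of $b$ inside the thin boxes $R_s$; the mean-zero hypothesis $b\in\dot{\mathcal X}_1$ is spent exclusively on the remainder $\mathsf U_j$, where it is traded against the frequency localization of $R^{jk}_{s\nu}$ rather than against any spatial smoothness of the plate. Your proposal attributes the decay to the wrong cancellation acting on the wrong piece, and this cannot be repaired without reorganizing the proof along the lines of Lemmata \ref{Glemma} and \ref{Ulemma}.
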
 
Before the  proof of Lemma \ref{lemmaHinfty}, which is given in the final Subsection \ref{sspflemmaHinfty}, we observe that the estimate of Lemma \ref{lemmaH} is obtained by Riesz-Thorin (for instance) interpolation in $h$ of  the last two lemmata.

\subsection{Proof of Lemma \ref{lemmaHinfty}} \label{sspflemmaHinfty}
 The techniques of this Subsection  are an elaboration of the arguments  of \cite{Seeger}. In  particular Lemma \ref{Glemma} below is a stronger version of  \cite[Lemma 2.1]{Seeger} while Lemma \ref{Ulemma}  is essentially the dual form of \cite[Lemma 2.2]{Seeger}. 
 
We perform a further  decomposition  of $H_s^j$. Let 
 $\Xi=\{e_\nu\}$ be a maximal $2^{-j-10d}$-separated   set contained in $\supp \Omega_j$. We may partition $\supp \Omega_j $ in $\#\Xi\lesssim 2^{j(d-1)}$ subsets $E_{\nu}$ each containing $e_\nu$ and such that $\mathrm{diam}|E_\nu|\lesssim 2^{-j} $. Set
\[
H_{s\nu}^j (x) = H_{s}^j(x)\cic{1}_{E_\nu}(x').
\]
Also, let $\psi$ be a smooth function on $\R$ with $\cic{1}_{[-2,2]}\leq\psi\leq\cic{1}_{[-4,4]} $. Let $\kappa\in [0,1)$ and define the multiplier operator
\[
\widehat{P_\nu^j} (\xi) =\psi(2^{j(1-\kappa)}\xi'\cdot e_\nu).
\]
We now  decompose
\[
H_{s}^j  :=   \Gamma_{s}^j + \Upsilon_s^j,   \qquad \Gamma^j_s:=\sum_{\nu} P_\nu^j* H_{s\nu}^j, \quad \Upsilon_s^j:=H_{s}^j -  \Gamma_{s}^j\]
 so that $\mathsf{H}^j$ is the sum of the single scale bilinear forms  
\begin{align*}
& \mathsf{G}_j(b,h)= \left\langle \sum_{s}   \Gamma_{s}^j* b_{s-j}, \overline h \right\rangle, \\
& \mathsf{U}_j(b,h)=\left\langle \sum_{s}   \Upsilon_{s}^j *b_{s-j},\overline h \right\rangle
\end{align*}
satisfying  the estimates below.
\begin{lemma} \label{Glemma} Let $\tau>1$.
Then
\[
|\mathsf{G}_j(b,h)| \leq C_\tau 2^{-j\frac{(1-\kappa)}{2}} \|\Omega_j\|_{\infty}  |Q|\|b\|_{{\mathcal X_1}}\|h\|_{\mathcal Y_\tau}, \qquad C_\tau= \frac{C\tau}{\tau-1}.\]
\end{lemma}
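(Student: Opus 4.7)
My plan is to prove Lemma \ref{Glemma} in three stages: first a Fourier-side pointwise bound on $\widehat{\Gamma_s^j}$ via stationary phase; then, by Plancherel, a global $L^2\to L^2$ bound on the operator $T_jb:=\sum_s\Gamma_s^j*b_{s-j}$; and finally a translation of this $L^2$ bound into the stated $\dot{\mathcal{X}}_1\times\mathcal{Y}_\tau$ estimate by a truncation and interpolation argument.

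\textbf{Step 1 (Fourier decay).} The crux is the uniform estimate
\[
\|\widehat{\Gamma_s^j}\|_{L^\infty(\R^d)} \lesssim 2^{-j(1-\kappa)/2}\|\Omega_j\|_\infty,\qquad s\in\Z.
\]
For each $\nu$, $\widehat{H_{s\nu}^j}(\xi)$ is an oscillatory integral over the angular patch $E_\nu\subset S^{d-1}$. On the plate $\mathcal P_\nu=\{\xi:|\xi'\cdot e_\nu|\lesssim 2^{-j(1-\kappa)}\}$ on which $\widehat{P_\nu^j}$ is essentially supported, stationary phase on $S^{d-1}$ extracts a factor $2^{-j(1-\kappa)/2}$ (the square root of the plate thickness). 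Because the $2^{-j}$-separation of $\Xi$ forces only $O(1)$ plates $\mathcal P_\nu$ to be active at any prescribed direction $\xi'$, summation over $\nu$ preserves the pointwise bound. Scale orthogonality across $s$, coming from the annular Fourier support $|\xi|\sim 2^{-s}$ of each $\widehat{\Gamma_s^j}$, then upgrades this to
\[
\|T_jb\|_{L^2(\R^d)} \lesssim 2^{-j(1-\kappa)/2}\|\Omega_j\|_\infty\,\|b\|_{L^2(\R^d)}.
\]

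\textbf{Step 2 (from $L^2$ to $\dot{\mathcal{X}}_1\times\mathcal{Y}_\tau$).} This is more delicate since $b$ is only controlled in $\dot{\mathcal X}_1$. I would perform a secondary Calder\'on-Zygmund truncation $b=b_G+b_B$ at a threshold $\lambda>0$ to be optimized. The good part satisfies $\|b_G\|_\infty\le\lambda$ and consequently $\|b_G\|_2^2\le\lambda\|b\|_1\lesssim\lambda|Q|\|b\|_{\dot{\mathcal X}_1}$, so that Cauchy-Schwarz paired with Step 1 controls $|\langle T_jb_G,h\rangle|$ by
\[
2^{-j(1-\kappa)/2}\|\Omega_j\|_\infty\,\lambda^{1/2}|Q|^{1/2}\|b\|_{\dot{\mathcal X}_1}^{1/2}\,\|h\cic{1}_E\|_2,
\]
where $E=\bigcup_{L\in\mathcal Q}c\cdot 2^jL$ contains $\supp T_jb$. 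The bad part $b_B$, supported on a set of measure $\lesssim|Q|\|b\|_{\dot{\mathcal X}_1}/\lambda$, is handled by the trivial estimate of Lemma \ref{trivialestlemma} at exponent $\tau$. Balancing $\lambda$ and invoking the $L^\tau$-maximal bound to pass from $\|h\cic{1}_E\|$ to $\|h\|_{\mathcal Y_\tau}$ then produces the stated decay $2^{-j(1-\kappa)/2}$ together with the constant $C\tau/(\tau-1)$, which is essentially the operator norm of the Hardy-Littlewood maximal function on $L^\tau$ blowing up as $\tau\to 1^+$.

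\textbf{Main obstacle.} The principal difficulty is the stationary-phase analysis of Step 1 on the angular patches $E_\nu$, a refinement of the microlocal decomposition in \cite{Seeger}; the plate thickness $2^{-j(1-\kappa)}$ and the $2^{-j}$-separation of $\Xi$ must be carefully balanced so that both the per-$\nu$ gain and the summation over $\nu$ cooperate. A secondary subtlety in Step 2 is that the support enlargement $E$ has measure $\lesssim 2^{jd}|Q|$, much larger than $|Q|$, so the Plancherel gain $2^{-j(1-\kappa)/2}$ risks being eroded; the truncation level $\lambda$ must be tuned so that neither endpoint swamps the other and the uniform decay survives the interpolation.
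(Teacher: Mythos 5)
Your proposal does not follow the paper's argument, and both of its main steps contain genuine gaps.

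The fatal problem is in Step 1. You invoke ``scale orthogonality across $s$, coming from the annular Fourier support $|\xi|\sim 2^{-s}$ of each $\widehat{\Gamma_s^j}$.'' No such support property holds: $H^j_{s\nu}$ is a compactly supported, non-oscillatory kernel, and the angular truncations to $E_\nu$ and to $\supp\Omega_j$ destroy the zero mean of $\Omega$, so $\widehat{H^j_{s\nu}}(0)\neq 0$ in general and $\widehat{\Gamma^j_s}$ is in no sense concentrated on $|\xi|\sim 2^{-s}$. There is likewise no stationary phase to run, since $K_s(x)=\Omega(x')2^{-sd}\phi(2^{-s}x)$ carries no oscillatory factor; the per-frequency gain really comes from counting the $O(2^{j(d-2+\kappa)})$ plates $\mathcal P_\nu$ active at a given direction $\xi'$, which yields only a \emph{single-scale} multiplier bound. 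The absence of orthogonality in $s$ --- aggravated by the fact that the input $b_{s-j}$ changes with $s$, so $\sum_s\Gamma^j_s*b_{s-j}$ is not a fixed multiplier applied to one function --- is precisely the difficulty the paper's proof is built to overcome. It does so by estimating $\|\sum_s\Gamma^j_s*b_{s-j}\|_t$ for even $t=2r$, using Plancherel only once on the $r$-fold products (the tuple-counting in \eqref{Glemma2} is where $2^{-rj(1-\kappa)}=2^{-tj(1-\kappa)/2}$ appears, explaining the exponent $1/2$), and then controlling $\sup_\nu\|D_\nu\|_t^t$ by the purely spatial combinatorial estimate \eqref{dnu}: the supports of $H^j_{s\nu}$ are boxes $R_s$ of dimensions $2^s\times(2^{s-j})^{d-1}$, successive variables are forced into $2R_{s_{k-1}}(y_{k-1})$, and the mass bound \eqref{maintrick} $2^{-sd}\|\mathsf b_{R_s(z)}\|_1\lesssim 2^{-j(d-1)}\|b\|_{\mathcal X_1}$ is peeled off $t-1$ times. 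Note the lemma uses only $\|b\|_{\mathcal X_1}$ --- localization, not cancellation, of the $b_L$ is what drives it. The constant $C\tau/(\tau-1)$ then comes from interpolating/dualizing the $L^{2r}$ bounds, whose constants grow like $t=2r$, not from the $L^\tau$ norm of the maximal function.

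Step 2 also does not close. A secondary Calder\'on--Zygmund truncation of $b$ at height $\lambda$ leaves a bad part $b_B$ that still carries essentially the full $L^1$ mass, and Lemma \ref{trivialestlemma} applied to it produces no decay in $j$ whatsoever; nor is $T_jb_B$ supported on a small set (convolution with $\Gamma^j_s$ spreads supports to scale $2^s$), so you cannot recoup smallness from $\|h\cic{1}_E\|_\tau$. Optimizing $\lambda$ therefore cannot recover the factor $2^{-j(1-\kappa)/2}$: one endpoint of your interpolation has no gain. If you want to salvage an $L^2$-based route you would still need a substitute for the cross-scale orthogonality, and that is exactly what the paper's even-exponent expansion provides.
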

\begin{lemma} \label{Ulemma}Let $b\in \dot{\mathcal X}_1$. For all $\eps>0$ there exists a constant $C_{\kappa,\eps}$ depending on $\kappa,\eps$ only such that
\[
|\mathsf{U}_j(b,h)| \leq C_{\kappa,\eps}  2^{- \eps j} \|\Omega_j\|_{\infty} |Q| \|b\|_{\dot{\mathcal X}_1}\|h\|_{\mathcal Y_\infty}.\]
\end{lemma}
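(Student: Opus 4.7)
The plan is to exploit the cancellation $\int_L b_L=0$ of each atom of $b$ against the $L^\infty$-control on $h$, reducing Lemma \ref{Ulemma} to a uniform $L^1$-modulus-of-continuity bound on the kernel $\Upsilon_s^j$ at the scale $2^{s-j}$. Writing
\[
\mathsf{U}_j(b,h)=\sum_{L\in\mathcal{Q}}\int_L b_L(y)\,(\tilde\Upsilon_{s_L+j}^{j}*\overline h)(y)\,\d y,\qquad \tilde\Upsilon_s^j(x):=\Upsilon_s^j(-x),
\]
and subtracting on each $L$ the constant $(\tilde\Upsilon_{s_L+j}^{j}*\overline h)(c_L)$, the oscillation of the convolution over $L$ is dominated by $\|h\|_\infty \sup_{|t|\le 2^{s-j}}\|\Upsilon_s^j(\cdot+t)-\Upsilon_s^j\|_1$. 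Combined with $\|b_L\|_1\le |L|\|b\|_{\dot{\mathcal X}_1}$ and $\sum_{L\in\mathcal Q}|L|\le |3Q|$, this reduces the lemma to the uniform kernel estimate
\[
\omega_{s,j}:=\sup_{|t|\le 2^{s-j}}\|\Upsilon_s^j(\cdot+t)-\Upsilon_s^j\|_1\le C_{\kappa,\eps}\,2^{-\eps j}\|\Omega_j\|_\infty,\qquad s\in\Z.
\]

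To establish this, I would pass through $L^2$: by Cauchy--Schwarz on the essential spatial support of $\Upsilon_s^j$ (a ball of measure $\lesssim 2^{sd}$ up to tails), one has $\omega_{s,j}\lesssim 2^{sd/2}\|\Upsilon_s^j(\cdot+t)-\Upsilon_s^j\|_2$. Plancherel, together with the frequency concentration of $\widehat{\Upsilon_s^j}$ near $|\xi|\sim 2^{-s}$ and the pointwise bound $|1-e^{it\cdot\xi}|\lesssim|t||\xi|\lesssim 2^{-j}$, yields $\|\Upsilon_s^j(\cdot+t)-\Upsilon_s^j\|_2\lesssim 2^{-j}\|\Upsilon_s^j\|_2$. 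The remaining $L^2$ norm is controlled via the angular multiplicity estimate for the caps $\{|\xi'\cdot e_\nu|\lesssim 2^{-j(1-\kappa)+2}\}$ on which $\widehat{P_\nu^j}$ is supported: each direction $\xi'\in S^{d-1}$ lies in at most $\lesssim 2^{j\kappa(d-1)}$ such caps, whence Cauchy--Schwarz in $\nu$ and Plancherel produce
\[
\|\Gamma_s^j\|_2^2\lesssim 2^{j\kappa(d-1)}\sum_\nu\|H_{s\nu}^j\|_2^2\lesssim 2^{j\kappa(d-1)}\|\Omega_j\|_\infty^2\,2^{-sd}.
\]
Chaining the estimates, $\omega_{s,j}\lesssim 2^{-j(1-\kappa(d-1)/2)}\|\Omega_j\|_\infty$, which gives exponential decay in $j$ provided $\kappa<2/(d-1)$.

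The principal obstacle I anticipate is that $\Gamma_s^j=\sum_\nu P_\nu^j*H_{s\nu}^j$ fails to be compactly supported: since $\widehat{P_\nu^j}$ is a bounded (non-Schwartz) zero-order multiplier, $P_\nu^j$ has merely polynomial spatial decay, so the Cauchy--Schwarz passage from $L^1$ to $L^2$ on the spatial support requires quantitative control of the tails of $\Gamma_s^j$. This is the point where Seeger's stationary phase analysis of $\widehat{H_{s\nu}^j}$ over the angular cap $E_\nu\subset S^{d-1}$ of width $\sim 2^{-j}$ enters, extracting genuine decay in the direction $e_\nu$. The free parameter $\kappa\in(0,1)$ is left unspecified at this stage: it will be chosen in the proof of Lemma \ref{lemmaH} to balance the $2^{j\kappa(d-1)/2}$-loss recorded here against the $2^{-j(1-\kappa)/2}$ gain from Lemma \ref{Glemma}, after which the Riesz--Thorin interpolation indicated in the text produces the bound $2^{-cj(p-1)/p}$ of Lemma \ref{lemmaH}.
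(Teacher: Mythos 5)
Your opening reduction is sound and is in the same spirit as the paper's argument: using $\int_L b_L=0$ together with $\|h\|_{\mathcal Y_\infty}$ to bound $|\langle \Upsilon^j_{s_L+j}*b_L,\overline h\rangle|$ by $\|b_L\|_1\|h\|_\infty\sup_{|t|\lesssim 2^{s-j}}\|\Upsilon^j_s(\cdot+t)-\Upsilon^j_s\|_1$ and then summing over $L\in\mathcal Q$ is exactly the constant--mean zero mechanism that the paper implements scale-by-scale in \eqref{seegerest}. The genuine gap is that the resulting kernel estimate $\sup_{|t|\le 2^{s-j}}\|\Upsilon^j_s(\cdot+t)-\Upsilon^j_s\|_1\lesssim 2^{-\eps j}\|\Omega_j\|_\infty$, which is the entire content of the lemma, is not actually proved, and the sketch you give for it breaks at three points. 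First, the Cauchy--Schwarz passage from $L^1$ to $L^2$ needs $\Upsilon^j_s$ to be carried by a set of measure $\sim 2^{sd}$, but $P^j_\nu$ is the inverse Fourier transform of the zero-homogeneous symbol $\psi(2^{j(1-\kappa)}\xi'\cdot e_\nu)$, a genuinely non-local kernel with slowly decaying tails; you flag this obstacle but do not resolve it. Second, the claimed concentration of $\widehat{\Upsilon^j_s}$ near $|\xi|\sim 2^{-s}$ is false: $H^j_s$ has a rough angular part and sharp cutoffs, so its Fourier transform carries mass at all frequencies $\gtrsim 2^{-s}$. The correct statement --- that after subtracting $\sum_\nu P^j_\nu*H^j_{s\nu}$ the frequencies $|\xi|\gg 2^{-(s-j(1-\kappa))}$ contribute rapidly decaying terms --- is precisely the content of Seeger's stationary-phase bounds, quoted in the paper as $\|R^{jk}_{s\nu}\|_1\lesssim_M 2^{-j(d-1)}\min\{1,2^{-M\kappa j}2^{-M(s-j-k)}\}$; this is the analytic heart of the lemma, and you defer it to ``where Seeger's analysis enters'' without carrying it out. (Even granting the correct localization, the translation gain is $|t||\xi|\lesssim 2^{-\kappa j}$, not $2^{-j}$.) Third, the multiplicity count is wrong for $d\ge 3$: a fixed direction $\xi'$ lies in the band $\{|\xi'\cdot e_\nu|\lesssim 2^{-j(1-\kappa)}\}$ for roughly $2^{-j(1-\kappa)}2^{j(d-1)}=2^{j(d-2+\kappa)}$ of the $2^{-j}$-separated points $e_\nu$, not $2^{j\kappa(d-1)}$; with the correct count your chain yields $2^{-j\kappa}2^{j(d-2+\kappa)/2}$, which decays only in dimensions $d\le 3$, so the quantitative conclusion fails for $d\ge 4$ even heuristically.

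For comparison, the paper avoids all three issues by decomposing $\Upsilon^j_s=\sum_\nu\sum_k R^{jk}_{s\nu}*B_k$ into Littlewood--Paley pieces, pairing the mean-zero gain $\min\{1,2^{(s-j)-k}\}$ of $b_L$ against $B_k$ with Seeger's $L^1$ bounds on $R^{jk}_{s\nu}$, whose factor $2^{-j(d-1)}$ exactly absorbs the $\#\Xi\lesssim 2^{j(d-1)}$ directions in every dimension; the $k$-sum is then split at $k\approx s-j(1-\eps)$, using the cancellation factor for large $k$ and the kernel decay for small $k$. To repair your proposal you would need to prove (not assume) those $L^1$ estimates for the high-frequency part of $(\delta-P^j_\nu)*H^j_{s\nu}$, at which point you are essentially reproducing the paper's argument.
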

Notice that the combination of Lemma \ref{Glemma} with $\tau=2$ and $\kappa=1/2$ and Lemma \ref{Ulemma} with $\eps=1/4$ yields the required  estimate for  Lemma \ref{lemmaHinfty}, with $c=1/4$.
 Lemma \ref{lemmaHinfty} is thus proved up to the arguments for  Lemmata \ref{Glemma} and \ref{Ulemma}.
\begin{proof}[Proof of Lemma \ref{Glemma}] We may factor out $\|\Omega_j\|_\infty$ and assume that the angular part in the definition of $\Gamma_j$ is bounded by 1. We can also assume that $H_{s\nu}^j$ and $b$ are positive as cancellation plays no role in this argument: this is just a matter of saving space in the notation. Using interpolation and duality with $t$ below being the dual exponent of $\tau$, the estimate of the lemma follows if we show that for each integer $r\geq 1$ and $t=2r$  
\begin{equation}
\label{Glemma1}
\frac{1}{|Q|^{\frac1t}}\Big\|\sum_{s}   \Gamma_{s}^j* b_{s-j} \Big\|_{{t}}\lesssim t   2^{- \frac{j(1-\kappa)}{2}} 
  \|b\|_{\mathcal X_1} 
\end{equation}  with an implicit  constant that does not depend on $r$.
Setting
\[
M_\nu= \sum_{s} P^j_\nu * H_{s\nu}^j* b_{j-s}, \qquad D_\nu =\sum_{s}  H_{s\nu}^j* b_{s-j},
\]
we rewrite the left hand side of \eqref{Glemma1} raised to $t$-th power and subsequently estimate
\begin{equation}
\label{Glemma2}
\begin{split}
\Big\|\sum_{\nu_1,\ldots,\nu_{r}} \prod_{k=1}^{r}    M_{\nu_k} \Big\|_2^2& = 
 \Big\|\sum_{\nu_1,\ldots,\nu_{r}}    \widehat{M_{\nu_1}}*\cdots *  \widehat{M_{\nu_{r}}}  \Big\|_2^2
\lesssim 2^{rj(d-2+\kappa)}\sum_{\nu_1,\ldots,\nu_{r}}    \Big\| \prod_{k=1}^{r}    D_{\nu_k}   \Big\|_2^2  \\& \lesssim 2^{{t}j(d-1)} 2^{-rj(1-\kappa)} \sup_{\nu} \|D_\nu\|_{{t}}^{t}.
\end{split}
\end{equation}
We have   used Plancherel for the first equality, followed by the observation that  $\widehat P^j_{\nu_k} (\xi) $ is  uniformly bounded  and nonzero only if $|\xi'-e_{\nu_k}|<2^{-j(1-\kappa)}$. Thus there are at most $C2^{rj(d-2+\kappa)}$ $r$-tuples such that the $r$-fold convolution is nonzero, whence the first bound. Another usage of Plancherel,  the observation  that there are at most $2^{rj(d-1)}$ tuples in the summation, and finally H\"older's inequality yield the  second bound.
We are thus done if we   estimate for each fixed $\nu$
\begin{equation}
\label{dnu}
  \sum_{s_{1}\geq \cdots\geq s_{{t}}} \int \left(\prod_{k=1}^{{t}} H_{s_k\nu}^j(x-y_k) b_{s_k-j}(y_k) \right) \d y_1\cdots \d y_{t} \d x  \lesssim C^t  2^{-tj(d-1)}|Q|\|b\|_{\mathcal X_1}^{t} 
\end{equation}
as $\|D_\nu\|_{t}^{t}$ is at most $t^{{t}}$ times the above integral.
Notice that if $ \sigma\leq s$ then $\supp H_{\sigma\nu}^j$ is contained in a box  $R_{s}$ centered at zero and  having one long side of length $\lesssim 2^{s}$ and $(d-1)$ short sides of length $2^{s-j}$. 
If $z\in \R^d$,  $R_s(z)=z+R_s$ and
\[
\mathcal Q_s(z) = \{L\in \mathcal Q: s_L\leq s-j, L\subset 100 R_s(z)\}, \qquad \mathsf{b}_{R_s(z)} := \sum_{L \in \mathcal Q_s(z)} b_L
\]
 we have by disjointness of $ L\in \mathcal Q $
\begin{equation}
\label{maintrick}
2^{-sd}\left\|\mathsf{b}_{R_s(z)}\right\|_1 \lesssim  2^{-sd} |R_{s}(z)|  \|b\|_{\mathcal X_1}\leq   C 2^{-j(d-1)} \|b\|_{\mathcal X_1}:= \alpha.
\end{equation}
Also notice that for all fixed $y_1,\ldots,y_{{t}}$ and for all $s_1\geq \cdots \geq s_{t}$ there holds
\[I_{s_1,\ldots,s_{{t}}}(y_1,\ldots, y_{t}):=\int
\left(\prod_{k=1}^{{t}} H_{s_k\nu}^j(x-y_k)   \right)\,   \d x \leq \|H_{s_{t}\nu}^j\|_1 \prod_{k=1}^{{t}-1} \|H_{s_k\nu}^j\|_\infty \lesssim 2^{-j(d-1)}   2^{- d \mathsf{s}_{{t}-1} }
\]
where we wrote, here and in what follows
\[
\mathsf{s}_{n} = \sum_{k=1}^n s_k, \qquad n=1,\ldots, t.
\]
Furthermore, 
 $I_{s_1,\ldots,s_{{t}}}(y_1,\ldots, y_{t})$ is nonzero only if $y_{k}\in 2R_{s_{k-1}}(y_{k-1})$ for $k=t,t-1,\ldots,2$.
Now, writing $\mathsf{b}_{s_k}$ in place of ${b}_{s_k-j}$  for reasons of space as $j$ is kept fixed throughout and using  \eqref{maintrick} repeatedly,    the sum in \eqref{dnu} is equal to 
\[
\begin{split}
 &\quad\sum_{s_{1}\geq \cdots\geq s_{{t}}}  \int I_{s_1,\ldots,s_{{t}}}(y_1,\ldots ,y_{t}) \left( \prod_{k=1}^{{t}}   \mathsf{b}_{s_k}(y_k) \right)\, \d y_1\cdots \d y_{t}   \\ &\lesssim 2^{-j(d-1)} \sum_{s_1\geq\cdots \geq s_{{t}-1} } 2^{-d\mathsf{s}_{{t}-2} } \int  \mathsf{b}_{s_1}(y_1)\left( \prod_{k=2}^{{t}-1}   \mathsf{b}_{s_{k}}(y_k)  \cic{1}_{2R_{s_{k-1}}(y_{k-1})}(y_k)  \right)
 \frac{\|\mathsf{b}_{R_{s_{{t}-1}}(y_{{t}-1})}\|_1}{{2^{d  s_{{t}-1}}}} \d y_1\cdots \d y_{{t}-1}
 \\ & \lesssim \alpha  2^{-j(d-1)} \sum_{s_1\geq\cdots \geq s_{{t}-2} } 2^{-d\mathsf{s}_{{t}-3}} \int  \mathsf{b}_{s_1}(y_1)\left( \prod_{k=2}^{{t}-2}   \mathsf{b}_{s_{k}}(y_k)  \cic{1}_{2R_{s_{k-1}}(y_{k-1})}(y_k)  \right)
 \frac{\|\mathsf{b}_{R_{s_{{t}-2}}(y_{{t}-2})}\|_1}{{2^{d  s_{{t}-2}}}} \d y_1\cdots \d y_{{t}-2} 
  \\ & \lesssim \cdots \lesssim \alpha^{{t}-1} 2^{-j(d-1)} |Q|\|b\|_{\mathcal X_1} \leq C^{t}  2^{-tj(d-1)}|Q|\|b\|_{\mathcal X_1}^{t}\end{split}
\]
  as claimed, and this completes the proof.
  \end{proof}   
\begin{proof}[Proof of Lemma \ref{Ulemma}]
Again we  factor out $\|\Omega_j\|_\infty$ and work under the assumption that the angular part  is bounded by 1.
 In this proof $M$ is a large integer whose value may differ at each occurrence and the constants implied by the almost inequality sign are allowed to depend on $M$ only.    Let  $\beta$ be a smooth  function supported in $A_1=\{2^{-1}\leq |\xi|\leq 2\}$ and satisfying  
\[
\sum_{k\in \mathbb Z} \beta^2 (2^{k} \xi) =1 \qquad \xi \neq 0.
\] 
 Denote by
$
B_k =\mathcal F^{-1}\big\{ \beta(2^k\cdot)\big\}.
$
 Defining 
 \[
 \widehat{R_{s\nu}^{jk}}(\xi) = \beta(2^{k}\xi) \left( 1- \widehat{P_{\nu}^j}(\xi)\right) \widehat{H_{s\nu}^j}(\xi),
 \]
we recall from \cite[eqs.\ (2.6), (2.7)]{Seeger} the estimate
\[
\left\|R_{s\nu}^{jk} \right\|_{1} \lesssim_M 2^{-j(d-1)} \min\left\{1, 2^{-M\kappa j}2^{-M(s-j-k)}\right\}. 
\]
Now, fix $s$ and $L\in \mathcal Q$ with   $\ell(L)=2^{s-j}$ for the moment.  Recalling the definition of $\Upsilon_{s}^j$, we have the decomposition
 \[
 |\l \Upsilon_s^j*b_L, \overline h\r| \leq   \sum_{\nu}\sum_{k} | \l R_{s\nu}^{jk} *B_k * b_{L}, \overline h\r|, \]
and the 
  cancellation estimate (cf.\ \cite[eq.\ (2.5)]{Seeger}, a simpler version of Lemma \ref{bbestimate1})
\begin{equation}
\label{seegerest}
\begin{split}  
&\quad | \l R_{s\nu}^{jk} *B_k * b_{L}, \overline h\r|     \lesssim 
\min\{1,2^{(s-j)-k}\} 
\|R_{s\nu}^{jk} \|_1 \| b_{L}\|_1 \|h\|_\infty 
\\ &  \lesssim   2^{-j(d-1)} \min\left\{2^{(s-j)-k}, 2^{-M\kappa j-M(s-j-k)}\right\}|L|\|b\|_{\dot{\mathcal X}_1}  \|h\|_{\mathcal Y_\infty}. \end{split} 
\end{equation} 
Note  that $\#\Xi\lesssim 2^{j(d-1)}$. So for each $\eps>0$ we can use the left estimate in \eqref{seegerest}  for $k\geq s-j(1-\eps)$ and the right estimate otherwise, and obtain   
\begin{equation}
\label{intermest}
|\l \Upsilon_s^j*b_L, \overline h\r| \leq   \sum_{\nu}\sum_{k} | \l R_{s\nu}^{jk} *B_k * b_{L}, \overline h\r| \lesssim 2^{- \eps j}|L|\|b\|_{\dot{\mathcal X}_1}  \|h\|_{\mathcal Y_\infty}
\end{equation}
provided that $M$ is chosen large enough to have $2\eps<M\kappa$. The proof is thus completed by summing \eqref{intermest} over  $L \in \mathcal Q$ with $\ell(L)=2^{s-j}$ and later over $s$.\end{proof}

\section{Proof of Theorem \ref{theoremBR}}\label{secBR} Throughout this proof, $C$ is a positive absolute dimensional constant which may vary at each occurrence without explicit mention. Most of the arguments in this Section are contained in \cite[Section 3]{Ch88}; we reproduce the details for clarity.

Let 
$
\psi(x) = \cos\left( \textstyle 2\pi(|x| -\delta/4)\right).
$
From the asymptotic expansion of the inverse Fourier transform of the multiplier of $B_\delta$ \cite[Section 3]{Ch88}, which is $\mathcal C^\infty$ and radial, we obtain the kernel representation
\[
B_\delta (x) =\sum_{s\geq 1} \sum_{\nu} K_{s,\nu}(x) + L(x),  
\]
Here
\[
K_{s,\nu}(x)=\Omega_\nu(x') \psi(x) 2^{-sd}\phi(2^{-s} x)\]
with $\Omega_\nu$ being a finite smooth partition of unity on the unit sphere $S^{d-1}$ with sufficiently small support which is introduced for technical reasons, 
and $\phi$ being a suitable smooth radial function supported in $A_1=\{2^{-2} \leq |x|\leq 1\}$, while $L(x)$ is an integrable kernel  with $L(x)\leq C(1+|x|)^{-(d+1)}$, so that
\[
Lf(x) \leq C \mathrm{M}_1 f(x)
\]
which can be ignored for our purposes. We can also think of $\nu$ as fixed and omit it from the notation, and consider the kernel $K=\{K_s\}$ as above. We are going to verify that conditions in Theorem \ref{theoremABS} are satisfied by (the dual form to) $B_\delta$. First of all, condition \eqref{assker2} is obvious from the above discussion as $[K]_{0,\infty}<\infty$. Second, the \eqref{asstrunc} condition follows from the well-known estimate 
\[
\sup_{\mu,\nu}\|\Lambda_{\mu}^\nu\|_{L^2(\R^d)\times L^2(\R^d)}\leq C,
\]
see for instance \cite[Theorem E]{DR}. In order to verify the condition \eqref{lest}, let $\mathcal Q$ be a stopping collection with top $Q$. Let $b\in \mathcal X_1(\mathcal Q)$; we change a bit the notation for $b_s$ in this context by redefining
\[
b_s:=\sum_{ s_L=  s} b_L, \quad  s\geq 1, \qquad b_0:=\sum_{ { s_L\leq  0}} b_L.
\]
It is easy to see that in this context if $b\in \mathcal X_1$   supported on $Q$ and $h \in \mathcal Y_1$ one has
\[
\Lambda_{\mathcal Q,\mu,\nu} (b,h)= \left\langle \sum_{j\geq 1} \sum_{s\geq j}  \eps_s K_{s}*  b_{s-j},\overline h\right\rangle
\]
for a suitable choice of  signs  $\{\eps_s\}\in \{-1,0,1\}^{\mathbb Z}$,
and the same for $\Lambda_{\mathcal Q,\mu,\nu} (h,b)$ up to replacing $b$ by $b^{\mathsf{in}}$, restricting $h$ to be supported on $Q$, transposing $K_s$,  and  subtracting off the remainder terms which are estimated by 
\[|Q| \|b\|_{{\mathcal X_{1}} }  \|h\|_{\mathcal Y_{1}}.
\]  
Theorem \ref{theoremBR} is thus obtained from the next proposition via  an application of Theorem \ref{theoremABS}.\begin{proposition}  \label{BRest}
 Let $\{\eps_s\}\in \{-1,0,1\}^{\mathbb Z}$ be a choice of signs, $b \in {\mathcal X}_1$ and define
\[
 \mathsf{K}(b,h):=\left\langle \sum_{j\geq 1} \sum_{s\geq j}   \eps_s K_{s}*  b_{s-j}, \overline h\right\rangle.
\]
 There exists an absolute  constant $C$, in particular uniform over   $\{\eps_s\}\in \{-1,0,1\}^{\mathbb Z}$,  such that  
 \[
\left| \mathsf{K}(b,h)\right| \leq   \frac{Cp}{p-1}  |Q| \|b\|_{{\mathcal X_{1}} }  \|h\|_{\mathcal Y_{p}}.
\]
\end{proposition}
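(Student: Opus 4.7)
The plan is to mimic the structure of the proof of Proposition \ref{propseeger}: decompose
\[
\mathsf K(b,h) = \sum_{j\geq 1} \mathsf K^j(b,h), \qquad \mathsf K^j(b,h) := \sum_{s\geq j} \eps_s \l K_s * b_{s-j}, \overline h\r,
\]
and for each $\mathsf K^j$ establish (i) a trivial $(1,1)$-bound, uniform in $j$, and (ii) a companion $(1,\infty)$-type bound with geometric decay in $j$; then interpolate in the $h$-slot and sum in $j$.

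Step (i) is immediate. Since $\psi$ and $\Omega_\nu$ are bounded and $\phi$ is smooth with compact support in $A_1$, we have $\|K_s\|_\infty \lesssim 2^{-sd}$ uniformly in $s$, so $[K]_{0,\infty} \lesssim 1$. Lemma \ref{trivialestlemma} with $\beta=\infty$ applied to the truncation at level $j$ then gives
\[
|\mathsf K^j(b,h)| \leq C |Q| \|b\|_{\mathcal X_1} \|h\|_{\mathcal Y_1} \qquad \forall j\geq 1,
\]
which on its own is not summable, hence the need for a decaying counterpart.

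The main obstacle is to establish
\[
|\mathsf K^j(b,h)| \leq C 2^{-cj} |Q| \|b\|_{\mathcal X_1} \|h\|_{\mathcal Y_\infty}
\]
for some absolute $c>0$. In contrast to Lemma \ref{lemmaHinfty}, where the decay ultimately descended from $\int \Omega=0$ and the mean-zero of $b$, here $b \in \mathcal X_1$ carries no cancellation, so the mechanism must be the oscillation of $\psi(x)=\cos(2\pi(|x|-\delta/4))$. The strategy, which re-elaborates \cite[Section 3]{Ch88}, is to exploit the fact that after writing $\psi(|x|) \sim \mathrm{Re}(e^{2\pi i|x|})$ modulo bounded smooth corrections, stationary phase yields
\[
\bigl|\widehat{K_s}(\xi)\bigr| \lesssim 2^{-s(d-1)/2}\bigl(1+|2^s(|\xi|-1)|\bigr)^{-N},
\]
so $\widehat{K_s}$ is essentially localized to an annulus of thickness $2^{-s}$ about the unit sphere; this yields both amplitude decay and near-orthogonality across scales. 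Using Plancherel on $\sum_{s} \eps_s K_s * b_{s-j}$ and accounting for the scale mismatch between the scale-$2^s$ oscillation of $K_s$ and the scale-$2^{s-j}$ localization of the pieces $b_L$ at that level of $\mathcal Q$ (disjointness of which controls $\sum_{L:s_L=s-j}|L|\|b\|_{\mathcal X_1}$), one extracts the $2^{-cj}$ gain. Converting the resulting $L^2$ statement into the bilinear estimate against $h\in\mathcal Y_\infty$ through Cauchy--Schwarz on $3Q$ and the stopping structure is the step that produces the $|Q|$ factor. This is the delicate technical core of the argument.

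Once both estimates are in hand, Riesz--Thorin in the $h$-slot gives
\[
|\mathsf K^j(b,h)| \leq C 2^{-cj\frac{p-1}{p}} |Q| \|b\|_{\mathcal X_1} \|h\|_{\mathcal Y_p}, \qquad 1<p<\infty,
\]
and summing the geometric series in $j\geq 1$ produces the announced factor $\frac{Cp}{p-1}$. Notice that, unlike in the Seeger case, there is no need to split $\Omega$ into bounded/unbounded parts, since $\psi$ is already bounded and smooth; the proposition reduces essentially to the single decaying estimate described above.
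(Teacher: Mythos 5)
Your overall architecture coincides with the paper's: the same splitting $\mathsf K=\sum_j\mathsf K^j$, the same trivial $(1,1)$ bound via Lemma \ref{trivialestlemma} with $\beta=\infty$, interpolation in the $h$-slot, and summation of the geometric series to produce $Cp/(p-1)$; you also correctly identify that the cancellation comes from the oscillation of $\psi$ rather than from any mean-zero property of $b$. The issue is that the decisive step --- the decaying companion estimate --- is left as an assertion, and the mechanism you describe for it does not go through as stated. You propose to use the stationary-phase localization $|\widehat{K_s}(\xi)|\lesssim 2^{-s(d-1)/2}(1+2^s\,||\xi|-1|)^{-N}$ together with ``Plancherel on $\sum_s\eps_sK_s*b_{s-j}$.'' But Plancherel converts $\|K_s*b_{s-j}\|_2$ into an expression involving $\|\widehat{b_{s-j}}\|_{L^2}$ on the relevant annulus, i.e.\ into $L^2$ information about $b_{s-j}$, whereas the hypothesis $b\in\mathcal{X}_1$ only normalizes $b$ in $L^1$ (one controls $\|b_{s-j}\|_1\lesssim\|b\|_{\mathcal X_1}\sum_{s_L=s-j}|L|$, not $\|b_{s-j}\|_2$). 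Moreover, the supports of the $\widehat{K_s}$ all concentrate on the \emph{same} unit sphere $\{|\xi|=1\}$ (in nested annuli of thickness $2^{-s}$), so Plancherel gives no orthogonality across scales without further work. Neither obstruction is cosmetic: getting an $L^1\to L^2$ smoothing gain of size $2^{-cj}$ is exactly the content of the missing lemma.

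The paper closes this gap by proving the decay at the $\mathcal Y_2$ endpoint (which suffices, and which is in fact what your ``Cauchy--Schwarz on $3Q$'' step would naturally produce) through a physical-space $TT^*$ argument imported from \cite[Lemma 3.1 and (3.2)]{Ch88}: with $\widetilde K_s(\cdot)=\overline{K_s(-\cdot)}$ one has $|K_s*\widetilde K_s(x)|\leq C2^{-ds}(1+|x|)^{-\delta}$ and $\|K_s*\widetilde K_t\|_\infty\leq C2^{-dt}2^{-\delta s}$ for $s<t-1$. Expanding $\|\sum_{s>j}K_s*b_{s-j}\|_2^2$ into diagonal, adjacent, and separated cross terms, each term is paired back against $\|b_{s-j}\|_1$ and $\|b\|_{\mathcal X_1}$ (using the disjointness of the cubes in $\mathcal Q$ to control $\sup_x\|b_{s-j}\|_{L^1(B(x,C2^t))}$), and every term carries a factor $2^{-\delta s}\leq 2^{-\delta j}$ precisely because all scales in $\mathsf K^j$ satisfy $s\geq j$; the boundary piece $K_j*b_0$ is handled separately by the first kernel estimate. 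If you replace your Fourier-side sketch with this $TT^*$ computation (or supply an equivalent almost-orthogonality argument that works with $L^1$ control on $b$), the rest of your proof is correct and matches the paper's.
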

Notice that here we do not need to require $b\in \dot{ \mathcal X}_1$ as per the oscillatory nature of the problem.
\subsection{Proof of Proposition \ref{BRest}} Given our choice of  $\{\eps_s\}\in \{-1,0,1\}^{\mathbb Z}$, we  relabel $K_s:=\eps_sK_s$. It will be clear from the proof that the signs $\eps_s$ play no role.
We split
\[
\mathsf{K}(b,h)= \sum_{j\geq 1}\mathsf{K}^j(b,h),\qquad \mathsf{K}^j(b,h):= \sum_{s\geq j}  \left\langle K_{s}*  b_{s-j}, \overline h\right\rangle.
\]
The first estimate is a trivial one.
\begin{lemma} There exists $C>0$ such that $|\mathsf{K}^j(b,h)|\leq C|Q|\|b\|_{\mathcal X_1}\|h\|_{\mathcal Y_1}.$
\end{lemma}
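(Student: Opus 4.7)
The bound is purely size-theoretic: no cancellation from the oscillation of $\psi$ and no mean-zero property of $b$ is exploited. The plan is to apply Lemma \ref{trivialestlemma} directly with $\beta=\infty$ (so $\alpha=1$) to the kernel decomposition $\{K_s\}$ associated to $B_\delta$ via $K_s(x,y):=K_s(x-y)$. For this, I first verify the required uniform $L^\infty$ size control. Since $\Omega_\nu$ is a smooth bump on $S^{d-1}$, $\psi(x)=\cos(2\pi(|x|-\delta/4))$ has $|\psi|\leq 1$, and $\phi$ is smooth and supported in $A_1$, one gets
\[
\|K_s\|_\infty \leq \|\Omega_\nu\|_\infty \, \|\psi\|_\infty \, 2^{-sd} \|\phi\|_\infty \leq C\, 2^{-sd}, \qquad \supp K_s \subset A_s,
\]
so the translation-invariant kernel $K_s(x,y)=K_s(x-y)$ satisfies $[K]_{0,\infty} \leq C$ in the sense of \eqref{assker2}. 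This is exactly the input required by Lemma \ref{trivialestlemma}.

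With the $L^\infty$ kernel bound in hand, I bound absolute values in the definition of $\mathsf{K}^j$ and invoke Lemma \ref{trivialestlemma} with $\beta=\infty$, $\alpha=1$ to obtain
\[
|\mathsf{K}^j(b,h)| \leq \sum_{s\geq j} \int |K_s(x-y)|\,|b_{s-j}(y)|\,|h(x)| \, \d y\, \d x \lesssim [K]_{0,\infty}\,|Q|\,\|b\|_{\mathcal X_1}\,\|h\|_{\mathcal Y_1} \leq C\,|Q|\,\|b\|_{\mathcal X_1}\,\|h\|_{\mathcal Y_1},
\]
uniformly in $j\geq 1$. The uniformity in $j$ is precisely what reflects the "trivial" nature of the estimate: no gain from the stationary-phase/oscillatory structure of the Bochner-Riesz kernel nor from any vanishing average of $b_L$ has been used.

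The main obstacle here is simply checking that the $K_s$ at hand really do fit the framework of Section \ref{secABS} with $q=\infty$, and this is immediate from the explicit formula. There is no genuine difficulty: the role of the lemma is to serve as one endpoint in a later interpolation argument, paired with a companion estimate carrying a quantitative decay $2^{-cj}$ in terms of $\|h\|_{\mathcal Y_\infty}$ (playing the same role as Lemmata \ref{lemmaHtrivial}--\ref{lemmaHinfty} in the proof of Theorem \ref{theoremRH}). Interpolating the two bounds in $h$ and summing the resulting geometric series in $j$ should then yield Proposition \ref{BRest} with the expected $p/(p-1)$ blow-up as $p\to 1^+$.
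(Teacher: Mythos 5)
Your proof is correct and takes exactly the paper's route: the paper likewise disposes of this lemma in one line by applying Lemma \ref{trivialestlemma} with $\beta=\infty$ to $K=\{K_s\}$, having already noted that $[K]_{0,\infty}\leq C$ from the explicit kernel formula. Your verification of the size bound $\|K_s\|_\infty\lesssim 2^{-sd}$ and the support condition is the same (immediate) check the paper alludes to.
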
 
\begin{proof}   This follows from applying Lemma \ref{trivialestlemma} with $\beta=\infty $ to $K=\{K_s\}$, as it is immediate to see that for this kernel one has $[K]_{0,\infty}\leq C$ as already remarked. \end{proof}
The second estimate, which is essentially contained in \cite[Section 3]{Ch88}, is the one providing decay. 
\begin{lemma} \label{decayBR} There exists $C,c>0$ such that $|\mathsf{K}^j(b,h)|\leq C2^{-cj}|Q|\|b\|_{\mathcal X_1}\|h\|_{\mathcal Y_2}.$
\end{lemma}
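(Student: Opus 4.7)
My plan follows the $L^2$ scheme of Christ \cite{Ch88} adapted to the stopping-collection framework developed above. The oscillation carried by $\psi(x)=\cos(2\pi(|x|-\delta/4))$ forces, via stationary phase, $\widehat{K_s}$ to concentrate on an annular band of thickness $\sim 2^{-s}$ around the unit sphere with amplitude $\lesssim 2^{-s(d-1)/2}$. The decay in $j$ will ultimately come from the mismatch between this fine annular scale $2^{-s}$ and the coarser frequency resolution $2^{j-s}$ of $\widehat{b_{s-j}}$, which is built out of functions localized on cubes of sidelength $2^{s-j}$.

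Set $T_j b:=\sum_{s\geq j}K_s\ast b_{s-j}$. Since the truncation $s\leq s_Q$ together with $\supp b_L\subset L\subset 3Q$ keeps $\supp T_j b$ inside a bounded dilate of $Q$, Cauchy--Schwarz reduces the lemma to an $L^2$ bound:
\[
|\mathsf K^j(b,h)|\leq \|T_j b\|_2\,\|h\cic{1}_{3Q}\|_2 \leq C|Q|^{\frac12}\|h\|_{\mathcal Y_2}\,\|T_j b\|_2,
\]
where the second inequality uses the definition of $\|\cdot\|_{\mathcal Y_2}$ on $\mathsf{sh}\,\mathcal Q$ and its complement together with the separation properties \eqref{stopprop}--\eqref{separation}. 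The target is therefore
\[
\|T_j b\|_2 \leq C\,2^{-cj}|Q|^{\frac12}\|b\|_{\mathcal X_1}.
\]
To prove this, I would write $\|T_j b\|_2^2$ in $TT^\ast$ form as $\sum_{s,s'\geq j}\langle K_{s'}^\ast\ast K_s\ast b_{s-j},b_{s'-j}\rangle$, split $\psi$ into its $e^{\pm 2\pi i(|x|-\delta/4)}$ components, and invoke Christ's stationary-phase analysis of $K_{s'}^\ast\ast K_s$; the smallness of the angular support of $\Omega_\nu$ supplies a nondegenerate phase, and one obtains rapid decay in $|s-s'|$ reflecting the near-disjointness of the supports of $\widehat{K_s}$ and $\widehat{K_{s'}}$. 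Up to a bounded geometric factor, this reduces the estimate to the diagonal $\sum_{s\geq j}\|K_s\ast b_{s-j}\|_2^2$. Plancherel and the bound $|\widehat{K_s}|\lesssim 2^{-s(d-1)/2}\cic{1}_{A_s}$ on the annulus $A_s$ of thickness $2^{-s}$ leave the problem of bounding $\int_{A_s}|\widehat{b_{s-j}}|^2$, and the width-ratio $2^{-s}/2^{j-s}=2^{-j}$ delivers the desired gain because $\widehat{b_{s-j}}$ varies only on the coarser scale $2^{j-s}$. Summing over scales and converting via the packing estimate $\sum_{L\in\mathcal Q}|L|\lesssim|Q|$ and the definition of $\|b\|_{\mathcal X_1}$ closes the estimate.

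The main obstacle is making the width-ratio gain rigorous without any mean-zero hypothesis on $b_L$: unlike the rough singular integral case in Section \ref{secRH}, here $b$ lies in $\mathcal X_1$ rather than $\dot{\mathcal X}_1$, and cancellation must be supplied entirely by the oscillation of the kernel. Following Christ, the cleanest route is an explicit stationary-phase analysis of the convolution kernel $K_{s'}^\ast\ast K_s$ localized to narrow frequency shells; this single calculation produces simultaneously the inter-scale decay that suppresses the off-diagonal $TT^\ast$ contributions and the intra-scale $2^{-cj}$ factor, and bypasses the need for any independent restriction estimate on thin annuli. A secondary bookkeeping issue is that $\|b_{s-j}\|_2$ is not directly controlled by $\|b\|_{\mathcal X_1}$; however, the pointwise kernel bounds let one interpolate between trivial $L^1$ estimates (as in Lemma \ref{trivialestlemma}, already used in the ``no-decay'' Lemma above Lemma \ref{decayBR}) and the Plancherel-based $L^2$ estimate, recovering the required $L^1$-averaged form of $b$.
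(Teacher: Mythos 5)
Your overall plan coincides with the paper's: reduce via Cauchy--Schwarz/duality to an $L^2$ bound on $\sum_{s\geq j}K_s\ast b_{s-j}$, expand in $TT^\ast$ form, and feed in Christ's stationary-phase estimates for $K_s\ast\widetilde K_t$ where $\widetilde K_t(\cdot)=\overline{K_t(-\cdot)}$. The paper quotes exactly two inputs from \cite[Lemma 3.1 and (3.2)]{Ch88}, namely $|K_s\ast\widetilde K_s(x)|\leq C2^{-ds}(1+|x|)^{-\delta}$, $\|K_s\ast\widetilde K_t\|_\infty\leq C2^{-dt}2^{-\delta s}$ for $s<t-1$, and the diagonal bound $\|K_s\ast b_{s-j}\|_2^2\leq C2^{-\delta j}\|b\|_{\mathcal X_1}\|b_{s-j}\|_1$, then sums. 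So the skeleton is right; two of your stated mechanisms, however, would not survive as written.

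First, the ``width-ratio'' Plancherel argument for the diagonal term is the step that fails quantitatively. Bounding $\int_{A_s}|\widehat{b_{s-j}}|^2$ using only the thickness of the annulus and the size of $\widehat{b_{s-j}}$ gives at best $\|K_s\ast b_{s-j}\|_2^2\lesssim 2^{-sd}\|b_{s-j}\|_1^2\lesssim 2^{-sd}|Q|^2\|b\|_{\mathcal X_1}^2$, and since $2^{-sd}\geq 2^{-s_Qd}=|Q|^{-1}$ this produces $|Q|^2$ where the lemma needs $|Q|$ (with no compensating gain when $j\ll s_Q$). The correct form of the diagonal estimate must pair one \emph{global} $L^1$ norm of $b_{s-j}$ against its \emph{local} $L^1$ density $\|b\|_{\mathcal X_1}$, and that second factor is extracted in physical space: one writes $\|K_s\ast b_{s-j}\|_2^2=\langle b_{s-j},\widetilde K_s\ast K_s\ast b_{s-j}\rangle\leq\|b_{s-j}\|_1\|\widetilde K_s\ast K_s\ast b_{s-j}\|_\infty$ and uses the pointwise decay $2^{-ds}(1+|x|)^{-\delta}$ of the autocorrelation kernel, dyadically summed against $\sup_x\|b_{s-j}\|_{L^1(B(x,C2^m))}\lesssim 2^{md}\|b\|_{\mathcal X_1}$; the gain $2^{-\delta j}$ (more precisely $2^{-\min(\delta,d)j}$) comes from this convergent sum, not from a frequency-support count. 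Your fallback sentence (``explicit stationary-phase analysis of $K_{s'}^\ast\ast K_s$'') is indeed the right remedy and is what the paper does, but it should replace, not supplement, the Plancherel heuristic. Second, the off-diagonal terms do \emph{not} exhibit rapid decay in $|s-s'|$: Christ's bound $\|K_s\ast\widetilde K_t\|_\infty\leq C2^{-dt}2^{-\delta s}$ decays only in the \emph{smaller} scale $s$. This is still enough, because every scale present satisfies $s>j$, so $\sum_{j<s<t-1}2^{-\delta s}\lesssim 2^{-\delta j}$ and one again lands on $C2^{-\delta j}|Q|\|b\|_{\mathcal X_1}^2$ after controlling $\|\widetilde K_t\ast K_s\ast b_{s-j}\|_\infty\lesssim 2^{-\delta s}\|b\|_{\mathcal X_1}$ by support considerations and summing $\sum_t\|b_{t-j}\|_1\lesssim|Q|\|b\|_{\mathcal X_1}$. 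With these two corrections your argument becomes the paper's.
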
 
It is easy to see that interpolating the above estimates yields 
$$|\mathsf{K}^j(b,h)|\leq C2^{-j\frac{c(p-1)}{p}}|Q|\|b\|_{\mathcal X_1}\|h\|_{\mathcal Y_p},$$
the summation of which yields Proposition \ref{BRest}.
\begin{proof}[Proof of Lemma \ref{decayBR}]    Let $\widetilde K_s(\cdot)=\overline{K_s(-\cdot)}.$
We recall from \cite[Lemma 3.1]{Ch88} the estimates
\begin{equation}
\label{BRlemma}
\begin{split}
& |K_s*\widetilde K_s(x)| \leq C 2^{-ds} (1+|x|)^{-\delta},
\\
& \|K_s*\widetilde K_t \|_\infty \leq C 2^{-dt} 2^{-\delta s},\qquad \forall s<t-1.
\end{split}
\end{equation}
By duality, it suffices to prove that
\begin{equation}
\label{mainBR} 
\left\| K_{j}*  b_{0}\right\|_{2}^2
+
\left\|  \sum_{s>j} K_{s}*  b_{s-j}\right\|_{2}^2   \leq C2^{-cj}|Q|  \|b\|_{\mathcal X_1}^2.
\end{equation}
For the first term we use the first estimate in \eqref{BRlemma}:
\[
\left\| K_{j}*  b_{0}\right\|_{2}^2= |\l b_{0}, K_{j}* \widetilde K_{j}* b_0  \r | \leq \|b_0\|_1\|K_{j}* \widetilde K_{j}*b_0\|_\infty\leq C 2^{-\min(\delta,d)j}|Q|  \|b\|_{\mathcal X_1}^2.
\]The last inequality above follows from
\[
\|K_{j}* \widetilde K_{j}* b_0\|_{\infty}\leq 2^{-jd}\sum_{m=0}^j 2^{-m\delta}\sup_{x\in\mathbb{R}^d}\|b_0\|_{L^1(B(x,C2^m))}\leq C 2^{-\min(\delta,d)j}\|b\|_{\mathcal{X}_1},
\]where $B(x,C2^m)$ denotes a ball centered at $x$ with radius $C2^m$.
For the second term, we begin by quoting from \cite[(3.2)]{Ch88} that
\begin{equation}
\label{BRlemma2}
\left\| K_{s}*  b_{s-j}\right\|_{2}^2 \leq C 2^{-\delta j} \|b\|_{\mathcal X_1} \|b_{s-j}\|_1.
\end{equation}
Observe that
\begin{equation}
\label{BRlemma3}
\begin{split}
 \left\| \sum_{s>j} K_{s}*  b_{s-j}\right\|_{2}^2   & \leq   \sum_{s>j}  \left\|K_{s}*  b_{s-j}\right\|_{2}^2 + 2 \sum_{s} \left|\langle K_{s}*  b_{s-j}, K_{s-1}*  b_{s-1 -j} \rangle  \right|  \\ & +  2 \sum_{t} \sum_{j<s<t-1} \left|\langle \widetilde{K}_t*K_{s}*  b_{s-j}, b_{t- j} \rangle  \right|. 
\end{split}
\end{equation}
The first two terms are bounded by 
\[
C 2^{-\delta j} \|b\|_{\mathcal X_1} \sum_{s}\|b_{s-j}\|_1 \leq C 2^{-\delta j} |Q|\|b\|_{\mathcal X_1}^2,  \]
 according to \eqref{BRlemma2} for the first one and Cauchy-Schwarz followed by \eqref{BRlemma2} for the second. For the third term, from the second estimate of \eqref{BRlemma} and support considerations one has
\[
\|\widetilde{K}_t*K_{s}*  b_{s-j}\|_\infty\leq C \left(\sup_{x\in \R^d} \|b_{s-j}\|_{L^1(B(x,C2^t))} \right) \|\widetilde{K}_t*K_{s} \|_\infty\leq   C2^{-\delta s}\|b\|_{\mathcal X_1}.   
\] Therefore, the third summand in \eqref{BRlemma3} is dominated by
\[
 C\|b\|_{\mathcal X_1}\sum_{t>j} \|b_{t-j}\|_1\sum_{j<s<t-1} 2^{-\delta s}\leq C 2^{-\delta j} |Q|\|b\|_{\mathcal X_1}^2
\]
and collecting all the above estimates \eqref{mainBR} follows.  \end{proof}

%

\appendix
\section{Verification of \eqref{Kokadj}-\eqref{GNI}} \label{ssadj}
 Let $\mathcal Q$ be a stopping collection with top $Q$, $h\in \mathcal{Y}_{q'}, b\in\mathcal{X}_{q'}$. Clearly we can assume $\supp h\subset Q$. By possibly replacing $K_s$ by zero when $s\not\in (\mu,\nu]$ we can ignore the truncations $\mu,\nu$ in what follows and omit them from the notation.
Recall  the definitions \eqref{therealLambdaq}, \eqref{therealLambdaQ}
 \[
 \Lambda_{\mathcal Q}(h,b) =  \Lambda_{Q}(h,b) - \sum_{{\substack{R\in \mathcal Q\\ R\subset Q}}} \Lambda_{R}(h,b)  =  \Lambda^{ s_{Q} }(h ,b)- \sum_{{\substack{R\in \mathcal Q\\ R\subset Q}}} \Lambda^{ s_{R} }(h \cic{1}_{R},b)  \]
and the decomposition
\[b=b^{\mathsf{in}}+ b^{\mathsf{out}}, \qquad 
b^{\mathsf{in}} = \sum_{\substack{L\in \mathcal Q\\ 3L \cap 2Q \neq \emptyset}} b_L, \qquad b^{\mathsf{out}} = \sum_{\substack{L\in \mathcal Q\\ 3L \cap 2Q = \emptyset}} b_L.
\]
We first estimate
\begin{equation} \label{bout}
|\Lambda_{\mathcal Q}(h,b^{\mathsf{out}})| \lesssim  [K]_{0,q} |Q| \|h\|_{{\mathcal Y}_{1} }   \|b\|_{{\mathcal X}_{q'} }
\end{equation}
which is   a single scale estimate. In fact, since $\dist(R,\supp b^{\mathsf{out}})\geq\ell(R)/2$ for all   $R\subset Q$, by virtue of the support restriction in \eqref{assker2}, 
\[
s< s_R\implies \int K_s(x,y) h(y) \cic{1}_{R}(y) b^{\mathsf{out}}(x) \, \d y \d x =0.
\]
Therefore, by the same argument used in \eqref{tailsest},
\begin{equation} \label{bouttail}
\begin{split} &\quad 
|  \Lambda^{ s_{Q} }(h ,b^{\mathsf{out}})|\leq \int |K_{s_Q}(x,y)| |h (y)| |b^{\mathsf{out}}(x)| \, \d y \d x   \lesssim  [K]_{0,q} |Q|\|h\|_{{\mathcal Y}_{1} }  \|b\|_{{\mathcal X}_{q'} }.
\end{split}
\end{equation}
Proceeding similarly,  if $R\in \mathcal Q, R\subset Q$
\[
\begin{split}  
|  \Lambda^{ s_{R} }(h \cic{1}_R ,b^{\mathsf{out}})|
\leq \int |K_{s_R}(x,y)| |h\cic{1}_R (y)| |b^{\mathsf{out}}(x)| \, \d y \d x   \lesssim  [K]_{0,q} |R|\|h\|_{{\mathcal Y}_{1} }  \|b\|_{{\mathcal X}_{q'} }. 
\end{split}
\]
and the claimed \eqref{bout} follows by summing the last display over $R\in \mathcal Q, R\subset Q$, which are pairwise disjoint, and combining the result with \eqref{bouttail}.
The representation \eqref{Kokadj} will then be a simple consequence of the equality
\begin{equation} \label{final2}
\Lambda_{\mathcal Q}(h,b^{\mathsf{in}})= \left( \Lambda^{s_{Q}}(h,b^{\mathsf{in}}) - \sum_{L\in \mathcal Q: 3L\cap 2Q\neq \emptyset}  \Lambda^{s_{L}}(h,b_L)\right)   + V_{ \mathcal Q }(h, b) 
\end{equation}
where the remainder $V_{\mathcal Q}$ satisfies
\begin{equation} \label{GNIapp}  \begin{split}
|V_{ \mathcal Q }(h, b)|& \lesssim [K]_{0,q} |Q| \|h\|_{{\mathcal Y}_{q'} }   \|b\|_{{\mathcal X}_{q'} }.\end{split}
\end{equation}
We turn to the proof of \eqref{final2}. We will   use below without explicit mention that whenever $L,R\in \mathcal Q$ with $3R\cap3 L \neq \emptyset$, then $|s_{L}-s_R|<8,$ a consequence of the separation property \eqref{separation}. First of all, the restriction on the support \eqref{assker2} entails that
\begin{equation}
\label{appeq1}
\sum_{R\in \mathcal Q}  \Lambda^{s_{R}}(h\cic{1}_R,b^{\mathsf{in}}) =  \sum_{ {R\in \mathcal Q }}  \sum_{\substack {L\in \mathcal Q \\ 3L\cap 3R \neq \emptyset\\ 3L\cap 2Q \neq \emptyset}}\Lambda^{s_{R}}(h\cic{1}_R,b_L)   \end{equation}
as $\Lambda^{s_{R}}(h\cic{1}_R,b_L)=0$ unless $3L\cap 3R$ is nonempty.
As there are at most 16 $s$-scales in each difference $\Lambda^{s_{L}}-\Lambda^{s_{R}}$, using the trivial estimate \eqref{tailsest} with $\beta=q$ for each such scale yields
\begin{equation}
\label{appeq11}
\begin{split}
& \quad \sum_{R\in \mathcal Q }\sum_{\substack {L\in \mathcal Q \\ 3L\cap 3R \neq \emptyset\\ 3L\cap 2Q \neq \emptyset}}
\left|\Lambda^{s_{L}}(h\cic{1}_R,b_L) -\Lambda^{s_{R}}(h\cic{1}_R,b_L) \right| \lesssim  [K]_{0,q} \|h\|_{\mathcal Y_{q'}} \sum_{R\in \mathcal Q }  \sum_{\substack {L\in \mathcal Q \\ 3L\cap 3R \neq \emptyset\\ 3L\cap 2Q \neq \emptyset}}\|b_L\|_{1}\\ & \lesssim [K]_{0,q} \|h\|_{\mathcal Y_{q'}}\|b\|_{\mathcal X_1} \sum_{R\in \mathcal Q } |R| \lesssim [K]_{0,q}   |Q|\|h\|_{\mathcal Y_{q'}}\|b\|_{\mathcal X_1}.
\end{split}\end{equation}
Recalling the second property of stopping collections in \eqref{separation},  we have  the decomposition  
 \[ 
 h = h^{\mathsf{in}} + h^{\mathsf{out}}, \qquad h^{\mathsf{in}}:= h \cic{1}_{ \bigcup_{R\in \mathcal Q} R}, \qquad \supp h^{\mathsf{out}} \cap \left( \bigcup_{L\in\mathcal Q: 3L\cap 2 Q\neq \emptyset} 9L \right)=\emptyset.
 \]
Therefore, up to including the  error term of \eqref{appeq11} in \eqref{GNIapp},    \eqref{appeq1} can be rewritten as  
\begin{equation} \label{appeq2}
\begin{split} & \sum_{R\in \mathcal Q}  \sum_{\substack {L\in \mathcal Q \\ 3L\cap 3R \neq \emptyset\\ 3L\cap 2Q \neq \emptyset}}\Lambda^{s_{L}}(h\cic{1}_R,b_L)  
  =   \sum_{\substack {L\in \mathcal Q \\ 3L\cap 2Q \neq \emptyset} } \Lambda^{s_{L}}(h^{\mathsf{in}},b_L) - \sum_{\substack {L\in \mathcal Q \\ 3L\cap 2Q \neq \emptyset} } \Lambda^{s_{L}}(\widetilde{h_L},b_L),
\\
&\widetilde{h_L} =  \sum_{\substack {R\in \mathcal Q \\ 3L\cap 3R =\emptyset}} h\cic{1}_R, \qquad \supp \widetilde{h_L}\subset \R^d \setminus 3L.
  \end{split}
\end{equation}
We note that all the terms in the second sum on the right hand side of the first line of \eqref{appeq2} vanish due to the support restriction on $K_s$, as all the scales appearing are less than or equal to $s_L$ and $\supp b_L\subset L$.   The reasoning beginning with decomposition \eqref{appeq1} leads thus to the equality, up to tolerable error terms
\begin{equation} \label{appeq3}
\begin{split}& \sum_{R\in \mathcal Q}  \Lambda^{s_{R}}(h\cic{1}_R,b^{\mathsf{in}}) = \sum_{\substack {L\in \mathcal Q \\ 3L\cap 2Q \neq \emptyset} } \Lambda^{s_{L}}(h,b_L) - \sum_{\substack {L\in \mathcal Q \\ 3L\cap 2Q \neq \emptyset} } \Lambda^{s_{L}}(h^{\mathsf{out}},b_L).
  \end{split}
\end{equation}
Finally the second term on the right hand side of \eqref{appeq3} also vanishes, by virtue of the restriction on the support of $h^{\mathsf{out}}$, which does not intersect $9L$ for any $L$ in the sum. Therefore, \eqref{appeq3} is actually the equality  
\[
\sum_{\substack{R\in \mathcal Q\\ R\subset Q}} \Lambda^{s_R}(h\cic{1}_{R}, b^{\mathsf{in}})= \sum_{ R\in \mathcal Q } \Lambda^{s_R}(h\cic{1}_{R}, b^{\mathsf{in}})= \sum_{\substack {L\in \mathcal Q \\ 3L\cap 2Q \neq \emptyset} } \Lambda^{s_{L}}(h,b_L) +   V_{ \mathcal Q }(h, b)
\]
where $V_{ \mathcal Q }(h, b)$ satisfies \eqref{GNIapp}; the first equality in the above display is due to $\supp h\subset Q$. This equality clearly implies the sought after \eqref{final2}. 

\section{Sparse domination implies weak $L^1$ estimate}\label{Weak11}
We show that if a sublinear operator $T$ satisfies the sparse estimate \eqref{sparsegen} for $p_1=1,p_2=r$ for some $1\leq r<\infty$ then $T$ is of  weak type $(1,1)$. In particular, as mentioned in the Introduction, together with Theorem \ref{theoremRH}, this yields the weak $L^1$ estimate of $T_\Omega$, which is the main result of \cite{Seeger} proved by Seeger. 
The proof that follows is a simplified version of the arguments in \cite[Appendix A]{CuDPOu}; we are sure these arguments are well-known but were unable to locate a precise reference.
\begin{theorem} \label{domshift} Suppose that the sublinear operator $T $ has the following property: there exists $C>0$ and $1\leq r<\infty$ such that for every $f_1,f_2$ bounded with compact support   there exists a sparse collection $\mathcal S$ such that\begin{equation}
\label{unifest}
|\l Tf_1,f_2\r| \leq C   \sum_{Q \in \mathcal S } |Q| \l f_1 \r_{1,Q} \l f_2 \r_{r,Q} . 
\end{equation}
Then $T:L^{1}(\R^d) \to L^{1,\infty}(\R^d)$ boundedly.
\end{theorem}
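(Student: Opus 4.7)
\textbf{Proof Plan for Theorem \ref{domshift}.} The strategy is the textbook Calder\'on-Zygmund decomposition combined with duality against the sparse form. Fix $f \in L^1 \cap L^\infty$ with compact support and $\lambda > 0$; normalize $\|f\|_1 = 1$, so the goal is $|\{|Tf| > \lambda\}| \lesssim \lambda^{-1}$. Before anything else, record that the sparse hypothesis together with the standard chain $\sum_Q|Q|\langle f_1\rangle_{1,Q}\langle f_2\rangle_{r,Q} \leq \eta^{-1}\int \mathrm{M}_1 f_1 \cdot \mathrm{M}_r f_2$ and the boundedness of the maximal operators $\mathrm{M}_1$ on $L^p$ ($p>1$) and $\mathrm{M}_r$ on $L^{p'}$ ($p'>r$) yields $L^p$-boundedness of $T$ for every $p \in (1, r')$. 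Fix such a $p$ for use later.

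Perform a CZ decomposition $f = g + b$ at height $\lambda$: one obtains $\|g\|_\infty \lesssim \lambda$, $\|g\|_1 \leq \|f\|_1$, and $b = \sum_k b_k$ with $\mathrm{supp}\,b_k \subset Q_k$ (disjoint cubes), $\int b_k = 0$, $\|b_k\|_1 \lesssim \lambda|Q_k|$, and $\sum_k|Q_k| \lesssim \lambda^{-1}$. The good part is immediate: Chebyshev and the chosen $L^p$-bound give $|\{|Tg|>\lambda/2\}| \lesssim \lambda^{-p}\|g\|_p^p \lesssim \lambda^{-p}\|g\|_\infty^{p-1}\|g\|_1 \lesssim \lambda^{-1}$. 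Setting $\Omega = \bigcup_k 3Q_k$, the measure $|\Omega| \lesssim \lambda^{-1}$, so it remains to control $|F|$ where $F = \Omega^c \cap \{|Tb| > \lambda/2\}$ (which we may assume to have finite measure by a standard truncation/limiting argument).

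By duality, $\tfrac{\lambda}{2}|F| \leq \int_F |Tb| = \langle Tb, h\rangle$ with $h = \mathrm{sign}(Tb)\mathbf{1}_F$, so that $\|h\|_\infty \leq 1$ and $\mathrm{supp}\,h \subset \Omega^c$. Applying the sparse hypothesis produces $\langle Tb, h\rangle \leq C\sum_{Q\in\mathcal{S}}|Q|\langle b\rangle_{1,Q}\langle h\rangle_{r,Q}$, and two structural observations simplify it drastically. First, any $Q$ contained in some $Q_k$ contributes zero, since $Q \subset Q_k \subset \Omega$ and $h$ vanishes on $\Omega$. Second, for each remaining $Q$ (namely, one that strictly contains some $Q_k$'s), the zero-mean of $b_k$ together with disjointness yields $\int_Q|b| = \sum_{Q_k \subset Q}\|b_k\|_1 \lesssim \lambda\sum_{Q_k \subset Q}|Q_k| \lesssim \lambda|Q|$, i.e., $\langle b\rangle_{1,Q} \lesssim \lambda$. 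The problem reduces to estimating $\lambda\sum_{Q \in \mathcal{S}'}|Q|^{1/r'}|Q\cap F|^{1/r}$, where $\mathcal{S}' \subset \mathcal{S}$ is the subcollection of cubes containing at least one $Q_k$.

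The main obstacle is this final sparse sum: a priori it ranges over an unbounded nested chain around every point, and cubes can be arbitrarily large. The device to beat this is the geometric growth intrinsic to any sparse collection: because $|E_{Q_{j+1}}| \geq \eta|Q_{j+1}|$ is disjoint from $Q_j$, one gets $|Q_{j+1}|\ge(1-\eta)^{-1}|Q_j|$ along any nested chain in $\mathcal{S}$. Fubini's theorem converts the sum to $\int|b(y)|\,S(y)\,dy$ with $S(y) = \sum_{Q\ni y,\,Q\in\mathcal{S}'}(|Q\cap F|/|Q|)^{1/r}$, and the geometric growth allows one to split this chain at the scale $|Q| \sim |F|$: the large-scale tail is absolutely summable against $(|F|/|Q|)^{1/r}$ via a geometric series, while the few scales below $|F|$ are handled by the sharper bound $\langle b\rangle_{1,Q} \leq \|b\|_1/|Q|$ available for truly large cubes. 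The two contributions together give $\lambda|F| \lesssim \|f\|_1 + \tfrac14\lambda|F|$, and absorption completes the proof.
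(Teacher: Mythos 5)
Your overall route is genuinely different from the paper's: you decompose $f=g+b$ at height $\lambda$ and dualize $Tb$ against $h=\mathrm{sign}(Tb)\cic{1}_F$ with $F\subset\Omega^c$, whereas the paper never decomposes $f$ at all. It reduces matters to a restricted weak-type statement, removes (a Whitney dilate of) the set $H=\{\mathrm{M}_1 f_1>C|G|^{-1}\}$ from $G$, observes that every sparse cube with $\l f_2\r_{r,Q}\neq 0$ must satisfy $|Q\cap H|\leq 2^{-5}|Q|$, and therefore bounds the sparse form by $\int_{H^c}\mathrm{M}_1f_1\cdot \mathrm{M}_r f_2$, closing with H\"older and the interpolation bound $\|\mathrm{M}_1f_1\|_{L^{p'}(H^c)}\lesssim |G|^{-1/p}$. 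Your reduction up to the final sparse sum is essentially sound, modulo two small repairs: the dichotomy ``$Q\subset Q_k$ versus $Q$ strictly contains some $Q_k$'' is not exhaustive (the correct split is $Q\subset\Omega$ versus $Q\not\subset\Omega$; in the latter case every $Q_k$ meeting $Q$ has $\ell(Q_k)<\ell(Q)$, hence $Q_k\subset 3Q$, which still yields $\l b\r_{1,Q}\lesssim\lambda$), and $E_{Q_{j+1}}$ is disjoint from $E_{Q_j}$, not from $Q_j$ (geometric growth along chains survives, via $|Q_N|\geq\eta\sum_{j\leq N}|Q_j|$).

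The genuine gap is the last step. The geometric-growth device only controls the cubes with $|Q|\geq|F|$, where $(|Q\cap F|/|Q|)^{1/r}\leq(|F|/|Q|)^{1/r}$ is summable along the (geometrically growing) chain through each point. For the cubes with $|Q|<|F|$ the quantity $S(y)$ can contain on the order of $\log(|F|/|Q_{\min}(y)|)$ terms each of size comparable to $1$, so it is unbounded; and the ``sharper bound'' $\l b\r_{1,Q}\leq\|b\|_1/|Q|$ improves on $\l b\r_{1,Q}\lesssim\lambda$ only when $|Q|\gtrsim\lambda^{-1}\|b\|_1$, i.e.\ precisely for \emph{large} cubes, so it cannot rescue the small-cube regime (nor is there any global bound on the number of small cubes in $\mathcal S'$). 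What does close the argument is the maximal-function route, which is the paper's mechanism in disguise: since $\l b\r_{1,Q}\lesssim\lambda$ for every contributing cube, one has
\begin{equation*}
\sum_{Q\in\mathcal S'}|Q|\l b\r_{1,Q}\l h\r_{r,Q}\leq \eta^{-1}\sum_{Q\in\mathcal S'}|E_Q|\l b\r_{1,Q}\l h\r_{r,Q}\lesssim \int_{\R^d}\big(\mathrm{M}_1 b\wedge C\lambda\big)\,\mathrm{M}_r h ,
\end{equation*}
and the truncated maximal function obeys $\|\mathrm{M}_1b\wedge C\lambda\|_{p'}\lesssim \lambda^{1/p}\|b\|_1^{1/p'}$ by the weak $(1,1)$ inequality for $\mathrm{M}_1$ (integrate the distribution function up to height $\lambda$), so H\"older against $\|\mathrm{M}_rh\|_{p}\lesssim|F|^{1/p}$ for some $p>r$ gives $\lambda|F|\lesssim(\lambda|F|)^{1/p}$, i.e.\ $\lambda|F|\lesssim1$. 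Without this (or the paper's equivalent restriction to the set where $\mathrm{M}_1$ of the $L^1$ datum is small), the final sum is not controlled, because $\mathrm{M}_1b\cdot\mathrm{M}_rh$ by itself is only in $L^{q,\infty}$ for some $q<1$ and is not integrable.
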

\begin{proof}
By standard arguments it suffices to verify that
\[
\sup_{\|f_1\|_1=1}  \sup_{G\subset \R^d} \inf_{\substack{G'\subset G\\ |G|\leq 2|G'|}}\sup_{|f_2|\leq \cic{1}_{G'}} |\l Tf_1,f_2 \r|\leq C
\]
where $f_1,f_2$ are bounded  and compactly supported and $G$ has finite measure.
Given such $f_1$ with $\|f_1\|_1=1$ and $G$ of finite measure define the sets
\[\begin{split}
&  H :=  \left\{ x \in \R^d:  \mathrm{M}_{1} f_{1}(x)   > C |G|^{-1} \right\}, \\ &\tilde H:=\bigcup_{Q \in \mathcal Q} 3Q, \qquad \mathcal Q=\left\{ \textrm{max.\ dyad.\ cube\ } Q : |Q \cap   H|\geq 2^{-5} |Q|\right\}.
\end{split}
\]
It is  easy to see that $|\tilde H|\leq 2^{-10}|G|$ for suitable choice of $C$. 
  Therefore the set $G':G\backslash  \tilde H$ satisfies $|G|\leq 2|G'|$. We make the preliminary observation that
  \[
 \sup_{x\in H^c} \mathrm M_1f_1(x) \leq C|G|^{-1},
  \]
 so that by interpolation
 \begin{equation}
\label{2}
\|\mathrm M_1 f_1\|_{L^{p'}( H^c)} \leq  \left(\sup_{x\in H^c} \mathrm M_1f_1(x) \right)^{1-\frac{1}{p'}} \|\mathrm M_1 f_1\|_{1,\infty}^{\frac{1}{p'}} \leq C|G|^{-(1-\frac{1}{p'})},
\end{equation}
where $p'>1$ is chosen such that $p>r$. Fixing now any $f_2$ restricted to $G'$, we apply the domination  estimate, yielding the existence of a sparse collection $\mathcal S$ for which we have the estimate
\[
|\l Tf_1,f_2\r| \leq C \sum_{Q \in \mathcal S } |Q| \l f_1 \r_{1,Q} \l f_2 \r_{r,Q} . \]
We claim that 
\begin{equation}
\label{Isk}
|Q\cap H| \leq 2^{-5} |Q| \qquad \forall Q \in \mathcal S. 
\end{equation}
This is because if \eqref{Isk} fails for $Q$, $Q$ must be contained in $3Q'$ for some $Q'\in \mathcal Q$. But the support of $f_{2}$ is contained in $\widetilde{H}^c$ which does not intersect $3Q'$, whence  $\l f_{2}\r_{r,Q}=0$.
Relation \eqref{Isk} has the consequence that  if $\{E_Q: Q\in \mathcal S\}$ denote the distinguished pairwise disjoint  subsets of $Q \in \mathcal S$ with $|E_Q| \geq 2^{-2} |Q|$, the sets  $\widetilde{E_Q}:= E_Q \cap H^c$ are also pairwise disjoint and  $|\widetilde{E_Q}| \geq 2^{-3} |Q|$.   Therefore,
since 
the union of   $\widetilde{E_Q}$ is contained in $H^c$ by standard arguments we arrive at
\[
\begin{split}
|\l Tf_1,f_2\r| & \leq C \sum_{Q \in \mathcal S } |Q| \l f_1 \r_{1,Q} \l f_2 \r_{r,Q} \\
&\leq C \sum_{Q \in \mathcal S } |\widetilde{E_Q}| \l f_1 \r_{1,Q} \l f_2 \r_{r,Q} \leq C \int_{H^c} \mathrm M_1 f(x) \mathrm M_r f_2(x) \, \d x\\ &\leq C \|\mathrm M_1 f_1\|_{L^{p'}( H^c)} \|\mathrm M_r f_2\|_{L^p(\mathbb{R}^d)} \leq C|G|^{-(1-\frac{1}{p'})}|G|^{\frac1p} \leq
C \end{split}
\]
using \eqref{2} in the last step. The proof is complete.
\end{proof}

\bibliography{RoughKernels}{}
\bibliographystyle{amsplain}
\end{document}